\theoremstyle{plain}
\newtheorem{thm}{Theorem}
\newtheorem{prop}{Proposition}
\newtheorem{cor}{Corollary}
\newtheorem{lem}{Lemma}
\newtheorem*{assumption}{Assumption}
\theoremstyle{definition}
\newtheorem{rem}{Remark}
\newcommand{\Avg}{\mathrm{Avg}}
\newcommand{\Sym}{\mathrm{Sym}}
\title{On the value-distribution of the logarithms of symmetric power $L$-functions in the level aspect}
\author{
Philippe Lebacque,
Kohji Matsumoto,
Masahiro Mine
\\
and 
Yumiko Umegaki
}
\date{}
\begin{document}
\maketitle
%
%
\begin{abstract}
We consider the value distribution of logarithms of symmetric power $L$-functions associated with newforms of even weight and prime power level at $s=\sigma > 1/2$. 
In the symmetric square case, 
under certain plausible analytical conditions,
we prove that certain averages of those values in the level aspect, involving continuous bounded or Riemann integrable test functions $\Psi(u)$, can be written as integrals involving a density function (the ``$M$-function'') which is related with the Sato-Tate measure.
Moreover, even in the case of general symmetric power $L$-functions, we show the same type of formula when $\Psi(u)=cu$.   
We see that a kind of parity phenomenon of the density function exists.
\end{abstract}
%
%
\section{Introduction and the statement of main results}\label{secIntro}
\par
The prototype of the theory of $M$-functions is the limit theorem of Bohr and Jessen \cite{bj} for the Riemann zeta-function $\zeta(s)$, where $s=\sigma+i\tau\in\mathbb{C}$.
Let $R$ be a rectangle in the complex plane with the edges parallel to the axes.
Let $\sigma>1/2$, and for any $T>0$, let $L_{\sigma}(T,R)$ be the 1-dimensional Lebesgue measure of the set $\{\tau\in[0,T]\mid \log\zeta(\sigma+i\tau)\in R\}$.
Then the Bohr-Jessen theorem asserts the existence of a continuous non-negative function $\mathcal{M}_{\sigma}(w,\zeta)$ defined over $\mathbb{C}$, for which
\begin{align}\label{BJ-thm}
\lim_{T\to\infty}\frac{L_{\sigma}(T,R)}{T}=\int_R \mathcal{M}_{\sigma}(w,\zeta)
\frac{dudv}{2\pi}
\end{align}
holds with $w=u+iv\in\mathbb{C}$.
Here, the left-hand side is the average of $\log\zeta(s)$ in the $\tau$-aspect.
\par
Analogous results for Dirichlet $L$-functions $L(s,\chi)$ (and more generally, $L$-functions over number fields and function fields) were discussed much later by Ihara \cite{ihara} and a series of joint papers by Ihara and the second-named author
\cite{im_2010} \cite{im_2011} \cite{im-moscow} \cite{im_2014}.
Their results assert the existence of a continuous non-negative function $\mathcal{M}_{\sigma}(w,L)$ for which formulas of the form
\begin{align}\label{IM-thm}
{\rm Avg}_{\chi}\Phi(\log L(s,\chi))=\int_{\mathbb{C}}\mathcal{M}_{\sigma}(w,L)\Phi(w)
\frac{dudv}{2\pi}
\end{align}
are valid, where $\Phi$ is a certain test function and ${\rm Avg}_{\chi}$ means a certain average with respect to $\chi$.
The functions $\mathcal{M}_{\sigma}$ appearing on the right-hand sides of \eqref{BJ-thm} and \eqref{IM-thm} are now called the $M$-functions associated with $\log\zeta(s)$ and
with $\log L(s,\chi)$, respectively.
\par
After those studies, several mathematicians tried to obtain the same type of formulas for other zeta and $L$-functions.
In particular, $M$-functions associated with automorphic $L$-functions have been studied in recent years.
\par
This direction of research was first cultivated by two papers published in 2018.
The level and the modulus aspects of automorphic $L$-functions were discussed by \cite{lz}, while the difference of logarithms of two symmetric power $L$-functions
were treated in \cite{mu}.
\par
As for the $\tau$-aspect, formulas analogous to \eqref{BJ-thm} were obtained in \cite{mu-JNT} for automorphic $L$-functions, and in \cite{mu-ropar} for symmetric power $L$-functions.
The article \cite{mineLMJ} of the third-named author is also to be mentioned, in which a more general framework is treated.
\par
Recently, the third-named author \cite{mine} obtained a quite satisfactory analogue of \eqref{IM-thm} for automorphic $L$-functions in the level aspect.
\par
The aim of the present paper is to obtain an analogue of \eqref{IM-thm} for symmetric power $L$-functions, under certain analytical conditions, in the level aspect.
We will show the full analogue of \eqref{IM-thm} only in the case of symmetric square $L$-functions, but we will also obtain a result in a very special case for general symmetric power $L$-functions.
\par
In order to state the results, now let us prepare the notations.
Let $f$ be a primitive form of weight $k$ and level $N$, which means that it is a normalized common Hecke-eigen newform of weight $k$ for $\Gamma_0(N)$. 
We denote by $S_k(N)$ the set of all cusp forms of weight $k$ and level $N$. 
We have a Fourier expansion of $f$ at infinity of the form
\[
f(z)=\sum_{n=1}^{\infty}\lambda_f(n)n^{(k-1)/2}e^{2\pi inz},
\]
where $\lambda_f(1)=1$ and the Fourier coefficients $\lambda_f(n)$ are real numbers. 
We consider the $L$-function 
\[
L(f, s)
=
\sum_{n=1}^{\infty}\frac{\lambda_f(n)}{n^s} 
\]
associated with $f$ where $s=\sigma+i\tau\in\mathbb{C}$.
It is absolutely convergent when $\sigma >1$, and can be continued to the 
whole complex plane as an entire function.
\par
We denote by $\mathbb{P}$ the set of all prime numbers.
For $N$, let $\mathbb{P}_N$ be the set of primes which do not divide $N$.
We know that $L(f, s)$ $(\sigma>1)$ has the Euler product expansion
\begin{align*}
L(f, s)
=&
\prod_{p \mid N}(1-\lambda_f(p) p^{-s})^{-1}
\prod_{p \in \mathbb{P}_N}(1-\lambda_f(p) p^{-s} + p^{-2s})^{-1}
\\
=&
\prod_{p \mid N}(1-\lambda_f(p) p^{-s})^{-1}
\prod_{p \in \mathbb{P}_N}(1-\alpha_f(p) p^{-s})^{-1}(1-\beta_f(p) p^{-s})^{-1},
\end{align*}
where $\alpha_f(p)$ and $\beta_f(p)$ satisfy $\alpha_f(p)+\beta_f(p)=\lambda_f(p)$ and $|\alpha_f(p)|=|\beta_f(p)|=1$, and they are the complex conjugate of each other.
This Euler product is deduced from the relations
\begin{equation}\label{euler}
\lambda_f(p^{\ell})=
\begin{cases}
\displaystyle \lambda_f^{\ell}(p) & p \mid N,\\
\displaystyle \sum_{h=0}^{\ell}\alpha_f^{\ell-h}(p)\beta_f^h(p) & p \nmid N.
\end{cases}
\end{equation}
From these relations, we see that $|\lambda_f(n)|\leq d(n)$, where $d(n)$ is the number of divisors of $n$.
Further, for $p \mid N$, we have $\lambda_f^2(p)=p^{-1}$ if the $p$-component of $N$ is
just $p$, and $\lambda_f(p)=0$ if $p^2|N$ (see Miyake \cite{mi}).
\par
For any positive integer $r$ and $\sigma>1$, we denote the (partial) $r$th symmetric power $L$-function by
\[
L_{\mathbb{P}_N}(\Sym_f^{r}, s)=\prod_{p\in \mathbb{P}_N}\prod_{h=0}^{r}(1-\alpha_f^{r-h}(p)\beta_f^h(p) p^{-s})^{-1}.
\]
When $r=1$, this is nothing but the partial $L$-function
\[
L_{\mathbb{P}_N}(f, s)=
\prod_{p \in \mathbb{P}_N}(1-\alpha_f(p) p^{-s})^{-1}(1-\beta_f(p) p^{-s})^{-1}.
\]
Let
\begin{align}\label{log-L-P-def}
\log L_{\mathbb{P}_N}(\Sym_f^{r}, s)
=&
-
\sum_{p \in \mathbb{P}_N}
\sum_{h=0}^{r}
\mathrm{Log}(1-\alpha_f^{r-h}(p)\beta_f^h(p)p^{-s}),
\end{align}
where $\mathrm{Log}$ means the principal branch.
%
%
%
\par
By the recent achievement by Newton and Thorne \cite{NT2021}, there are local factors $L_p(\Sym_f^{r}, s)$ for $p\mid N$ such that 
\begin{gather*}
L(\Sym_f^{r}, s)
= L_{\mathbb{P}_N}(\Sym_f^{r}, s) \prod_{p \mid N} L_{p}(\Sym_f^{r}, s) 
\end{gather*}
agrees with the $L$-function of an automorphic representation of $\mathrm{GL}_{r+1}(\mathbb{A}_\mathbb{Q})$. 
By Godement and Jacquet \cite{GJ1972}, we thus know that $L(\Sym_f^{r}, s)$ can be meromorphically continued to the whole complex plane. 
Some properties for the symmetric power $L$-functions are collected in Section \ref{sec:sym}. 
\par
In the present paper, we assume the generalized Riemann hypothesis.
\begin{assumption}[GRH]
Let $f$ be a primitive form of weight $k$ with $2\leq k < 12$ or $k=14$ and level $q^m$, where $q$ is a prime number.
The $L$-functions $L(\Sym_f^r, s)$ satisfies the Generalized Riemann Hypothesis which means that $L(\Sym_f^r, s)$ has no zero in the strip $1/2 < \sigma \leq 1$.
\end{assumption}
\begin{rem}\label{grh}
It might be possible to assume some zero-density theorem for $L(\Sym_f^r ,s)$
instead of the GRH to obtain theorems similar to the main results in the present paper.    However, here we choose the way of assuming the GRH. This assumption makes it possible to discuss the $M$-function for any symmetric power $L$-function $L(\Sym_f^r, s)$ (see Theorem~\ref{main2}).
\end{rem}
\begin{rem}\label{weight_level}
We consider the average over primitive forms, because we are interested in the values or zeros of the automorphic $L$-functions associated with newforms.
In previous articles  \cite{im_2010} \cite{im_2011} \cite{im-moscow} 
\cite{im_2014} by Ihara and the second-named author, and also in
\cite{lz} by the first-named author and Zykin, similar value-distribution
results in character aspects were obtained with the use of orthogonality of
characters.    In the present situation, 
Petersson's formula restricted to primitive forms plays this role, but we
need to control the error terms.
In \cite{ichihara}, the fourth-named author obtained such a formula for the space of newforms of weight $k$ with $2\leq k < 12$ or $k=14$ and level $q^m$, where $q$ is a prime number,
which we will use in the present paper.
Therefore, in our results we assume the same conditions (see Remark~\ref{restriction_weight} below).
\end{rem}
%
%
\par
Let $q$ be a prime number and $m$ be a positive integer. 
Our aim is to study certain averages of the value of
\[
\log L_{\mathbb{P}_q}(\Sym_f^{r}, \sigma)
=
\log L_{\mathbb{P}_{q^m}}(\Sym_f^r, \sigma)
\]
for $\sigma >1/2$, 
where $f$ varies over the set of primitive forms in $S_k(q^m)$ such that $L(\Sym_f^{r}, s)$ has an analytic continuation to $\mathbb{C}$ as an entire function. 
Because of Assumption (GRH), there is no problem how to choose the branch of the logarithm. 

Here, we present the exact definitions of the averages. 
Define $\mathscr{P}_k(q^m)$ as the set of all primitive forms in $S_k(q^m)$. 
Let $g$ be any function on $\mathscr{P}_k(q^m)$. 
For any subset $X \subset \mathscr{P}_k(q^m)$, we introduce the abbreviation
\begin{gather*}
\sideset{}{'}\sum_{f \in X} g(f)
= \frac{1}{C_k(1-C_q(m))}\sum_{f \in X} \frac{1}{\langle f,f \rangle} g(f), 
\end{gather*}
where $\langle,\rangle$ denotes the Petersson inner product, and
\[
C_k=\frac{(4\pi)^{k-1}}{\Gamma(k-1)},\quad
C_q(m)
=
\begin{cases}
  0 & m=1,\\
  q(q^2-1)^{-1} & m=2,\\
  q^{-1} & m\geq 3. 
\end{cases}
\]
Then, we define two subsets of $\mathscr{P}_k(q^m)$ as follows: 
\begin{align*}
\mathscr{P}_k^\mathrm{E}(q^m)
&= \{f \in \mathscr{P}_k(q^m) \mid \text{$L(\Sym_f^{r}, s)$ is an entire function}\}, \\
\mathscr{P}_k^\mathrm{P}(q^m)
&= \{f \in \mathscr{P}_k(q^m) \mid \text{$L(\Sym_f^{r}, s)$ has poles}\}. \\
\end{align*}
We will see in Section \ref{sec:sym} that $\mathscr{P}_k^\mathrm{P}(q^m) \neq \emptyset$ if and only if $f$ is of CM-type and $r \equiv 0 \pmod{4}$. 
In particular, $\mathscr{P}_k^\mathrm{P}(q^m)$ is always empty for $r=1$ or $2$. 
Let $\Psi$ be a $\mathbb{C}$-valued function defined on $\mathbb{R}$. 
Finally, we define the average
\begin{align*}
\Avg \Psi(\log L_{\mathbb{P}_q}(\Sym_f^{r},\sigma))
= \lim_{q^m\to\infty} 
\sideset{}{'}\sum_{f \in \mathscr{P}_k^\mathrm{E}(q^m)}
\Psi(\log L_{\mathbb{P}_q}(\Sym_f^{r},\sigma)), 
\end{align*}
where the symbol $q^m\to\infty$ means that 
$m$ tends to $\infty$ while a prime $q$ is fixed (Case I), 
or 
$q$ tends to $\infty$ while $m$ is fixed (Case II).
We let
\begin{align*}
\mathbb{P}_*=\begin{cases}
\mathbb{P}_q\;\;\text{in Case I},\\
\mathbb{P} \;\; \;\text{in Case II}. 
\end{cases}
\end{align*}
%
%
\par
Our ultimate objective is to construct the ``$M$-function'' $\mathcal{M}_{\sigma}(\Sym^r, u)$ which satisfies
\begin{align*}
\Avg \Psi (\log L_{\mathbb{P}_q}(\Sym_f^{r},\sigma))
=
\int_{\mathbb{R}}\mathcal{M}_{\sigma}(\Sym^r, u)\Psi (u)
\frac{du}{\sqrt{2\pi}}
\end{align*}
for any function $\Psi:\mathbb{R}\to\mathbb{C}$ of exponential growth under the GRH. 
This type of result was established in \cite[Theorem 4]{im-moscow} for Dirichlet $L$-functions of global fields.
We may expect that the same type of result would hold for symmetric power $L$-functions.
In the present paper, however, we work with $\Psi$ of much stronger
conditions.
%
%
\par
Let
\[
\rho
=
\begin{cases}
  [r/2] & r\;\text{is odd}\\
  [r/2]-1 & r\;\text{is even}
\end{cases}
=
\begin{cases}
  \displaystyle \frac{r-1}{2} & r\;\text{is odd},\\
  \displaystyle \frac{r}{2}-1 & r\;\text{is even}.
\end{cases}
\]
The first main aim of the present paper is to establish the following theorem, which treats the cases $r=1$ or $2$ which means $\rho=0$.
%
%
\begin{thm}\label{main1}
Let $k$ an even integer which satisfies $2\leq k <12$ or $k=14$.
For $r=1, 2$, we suppose Assumption (GRH).
Then, for $\sigma>1/2$, there exists a continuous function 
\begin{gather}\label{eq:M}
\mathcal{M}_{\sigma}(\Sym^r, u)
=\mathcal{M}_{\sigma,\mathbb{P}_*}(\Sym^r, u):\mathbb{R}\to\mathbb{R}_{\geq 0}
\end{gather}
which can be explicitly constructed, and for which the formula
\begin{align}\label{main-formula}
\Avg \Psi (\log L_{\mathbb{P}_q}(\Sym_f^r,\sigma))
=
\int_{\mathbb{R}}\mathcal{M}_{\sigma}(\Sym^r, u)\Psi (u)
\frac{du}{\sqrt{2\pi}}
\qquad (r=1,2) 
\end{align}
holds for the function $\Psi:\mathbb{R} \to \mathbb{C}$ which is a bounded continuous function or a compactly supported Riemann integrable function.
In Case I, the $M$-function 
$\mathcal{M}_{\sigma}=\mathcal{M}_{\sigma,
\mathbb{P}_q}$ depends on $q$, while in Case II, 
$\mathcal{M}_{\sigma}=\mathcal{M}_{\sigma,
\mathbb{P}}$ is independent of $q$.
\end{thm}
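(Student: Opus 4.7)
The plan is to construct $\mathcal{M}_\sigma(\Sym^r,u)$ as the density of an explicit random series indexed by $\mathbb{P}_*$, and to establish \eqref{main-formula} by a characteristic-function argument that couples Ichihara's restricted Petersson formula with Sato--Tate equidistribution.

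First, I would set up the random model. Let $(X_p)_{p\in\mathbb{P}_*}$ be independent random variables on $[0,\pi]$, each distributed according to the Sato--Tate measure $d\mu_{ST}(\theta)=(2/\pi)\sin^2\theta\,d\theta$, and set
\begin{equation*}
F_{p,\sigma,r}(\theta)=-\sum_{h=0}^{r}\mathrm{Log}\bigl(1-e^{i(r-2h)\theta}p^{-\sigma}\bigr),
\end{equation*}
which is a real-valued bounded continuous function of $\theta$ (the sum pairs complex-conjugate summands). Since $\mathrm{Var}(F_{p,\sigma,r}(X_p))=O(p^{-2\sigma})$ with $\sigma>1/2$, Kolmogorov's three-series theorem gives almost-sure convergence of $L_\sigma(\Sym^r)=\sum_{p\in\mathbb{P}_*}F_{p,\sigma,r}(X_p)$, and its characteristic function factors as
\begin{equation*}
\tilde{\mathcal{M}}_\sigma(\Sym^r,y)=\prod_{p\in\mathbb{P}_*}\int_0^\pi e^{iyF_{p,\sigma,r}(\theta)}\,d\mu_{ST}(\theta).
\end{equation*}
A standard small-ball estimate, using that $F_{p,\sigma,r}$ is non-constant and the Sato--Tate density is bounded below on compact subsets of $(0,\pi)$, gives rapid decay in $y$, so Fourier inversion defines a continuous non-negative density $\mathcal{M}_\sigma(\Sym^r,u)$. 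Its dependence on $\mathbb{P}_*$ explains the distinction between Cases I and II.

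The core step is to prove convergence of the Petersson-Ichihara averages to expectations against $\mathcal{M}_\sigma$. For a truncation parameter $y>0$, set
\begin{equation*}
S_y(f)=\sum_{p\in\mathbb{P}_q,\,p\leq y}F_{p,\sigma,r}(\theta_f(p)),
\end{equation*}
where $\theta_f(p)$ is the Satake angle at $p$. For the truncated piece, one expands $e^{ixS_y(f)}$ into polynomial expressions in $\lambda_f(p^n)$ for $p\le y$ via \eqref{euler}, applies Ichihara's Petersson formula on $\mathscr{P}_k(q^m)$ prime by prime, and invokes Weyl-type equidistribution to show that the joint law of $(\theta_f(p))_{p\leq y}$ converges to the product of Sato--Tate measures as $q^m\to\infty$. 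This yields
\begin{equation*}
\sideset{}{'}\sum_{f\in\mathscr{P}_k^\mathrm{E}(q^m)}e^{ixS_y(f)}\longrightarrow\prod_{p\in\mathbb{P}_*,\,p\leq y}\int_0^\pi e^{ixF_{p,\sigma,r}(\theta)}\,d\mu_{ST}(\theta)
\end{equation*}
for each fixed $x$ and $y$. Combined with the tail estimate discussed below and L\'evy's continuity theorem, this gives \eqref{main-formula} for bounded continuous $\Psi$; the extension to compactly supported Riemann integrable $\Psi$ then follows by sandwiching $\Psi$ between continuous functions, using that $\mathcal{M}_\sigma(\Sym^r,u)\,du$ charges no Jordan boundary thanks to continuity of the density.

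The principal obstacle is the truncation argument: one needs the Petersson average of $|\log L_{\mathbb{P}_q}(\Sym_f^r,\sigma)-S_y(f)|^2$ to tend to $0$ as $y\to\infty$ uniformly in $q^m$. Under GRH, one expresses $\log L(\Sym_f^r,\sigma)$ as a short Dirichlet polynomial in $\Sym^r$-coefficients plus an acceptable error, reducing the task to mean-value estimates of sums like $\sum_{y<p\leq y'}\lambda_{\Sym_f^r}(p)p^{-\sigma}$ averaged over $f\in\mathscr{P}_k^\mathrm{E}(q^m)$. Evaluating these averages through Ichihara's formula produces diagonal contributions governed by Rankin--Selberg convolutions $L(\Sym_f^r\times\Sym_f^r,s)$ together with off-diagonal Kloosterman terms; this is exactly where the restriction $r\in\{1,2\}$ (equivalently $\rho=0$) enters, since the requisite analytic input for $L(\Sym_f^2,s)$ and $L(\Sym_f^4,s)$ is classically available, whereas for larger $r$ the corresponding moment control would require further hypotheses beyond GRH.
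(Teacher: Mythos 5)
Your overall route mirrors the paper's: both build $\mathcal{M}_\sigma$ from a product of local characteristic functions against the Sato--Tate measure, match the Petersson--Ichihara average of a truncated Euler product to the finite-product integral, use GRH to control the tail past a short Dirichlet polynomial, and finish via a L\'evy-type/Fourier-inversion step, with the Riemann-integrable case handled by sandwiching. The probabilistic ``random series'' framing is equivalent in substance to the paper's explicit one-prime densities plus convolution (Proposition~\ref{M_P}).

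There are, however, three places where the proposal is inaccurate or overcomplicated relative to what the paper actually does. First, the attributed reason for restricting to $r=1,2$ is wrong. The paper assumes GRH for $L(\Sym_f^r,s)$ for \emph{all} $r$, and the analytic continuation for all $r$ is supplied by Newton--Thorne; no ``classically available'' input versus ``further hypotheses beyond GRH'' distinction is in play. The genuine reason is structural: for $r=1,2$ one has $\rho=0$, so $\log L_{\mathcal{P}_q}(\Sym_f^r,\sigma)=\mathscr{G}_{\sigma,\mathcal{P}_q}(\alpha_f^r(\mathcal{P}_q))$ depends only on a single Chebyshev-type combination $\alpha_f^{\nu r}+\beta_f^{\nu r}=\lambda_f(p^{\nu r})-\lambda_f(p^{\nu r-2})$ per prime, which is what makes the polynomial expansion (Lemma~\ref{key}) and the tail approximant $\mathcal{S}_r$ tractable; for $\rho>0$ the Euler factor involves all exponents $r-2h$ and the algebra is more complex, and $\mathscr{P}_k^{\mathrm{P}}(q^m)$ can be nonempty (which is why Theorem~\ref{main2} adds the $\mathscr{P}_k^{\mathrm{P}}$ condition). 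Second, your tail estimate invokes Rankin--Selberg convolutions $L(\Sym_f^r\times\Sym_f^r,s)$ and Kloosterman-term bookkeeping; the paper instead uses a Duke-type argument (Proposition~\ref{prop:convbd}, Corollary~\ref{cor:dlog}, Lemma~\ref{appSymL}) to write $\log L_{\mathbb{P}_q}-\log L_{\mathcal{P}_q(\log q^m)}-\mathcal{S}_r$ as a small error under GRH, and then the Petersson average of $|\mathcal{S}_r|$ is bounded purely by Hecke multiplicativity plus the explicit Ichihara error $n^{(k-1)/2}E(q^m)$; no Rankin--Selberg input is needed. Third, ``rapid decay'' of the single-prime characteristic function is an overstatement: the true bound is $\widetilde{\mathcal{M}}_{\sigma,p}(\Sym^r,x)\ll p^{\sigma}(1+|x|)^{-1/2}$ (Lemma~\ref{lemma-M-est}, proved via a stationary-phase/first-derivative test and the Jessen--Wintner inequality), and one must multiply at least three large primes' factors to reach $L^1$ integrability, as the paper does with $\mathcal{P}_q^0$. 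If you tighten these three points the argument is essentially the paper's.
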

%
%
\begin{rem}
In \cite{im_2011} and \cite{mu}, the function $\Psi$ was assumed to be a bounded continuous function or a compactly supported characteristic function.
But the second-named author \cite{matsumoto} noticed that ``compactly supported characteristic function'' should be replaced by ``compactly supported Riemann integrable function''.
\end{rem}
%
%
\begin{rem}\label{abs-conv-case}
The function $\mathcal{M}_{\sigma}(\Sym^r, u)$ ($r=1,2$)
tends to $0$ as $|u|\to\infty$ (Proposition~\ref{M} (3) in Section~\ref{sec3}).
In particular, if $\sigma>1$, then $\mathcal{M}_{\sigma}(\Sym^r, u)$ is compactly supported, and hence \eqref{main-formula} is valid for any continuous $\Psi$.
For the details, see the end of Section \ref{sec4}.
Moreover, in view of \cite{im-moscow}, it is plausible that \eqref{main-formula}
is valid for any $\sigma>1/2$ and for any $\Psi$ which is continuous and of exponential growth.
\end{rem}
We mention here a corollary of Theorem~\ref{main1}.
Consider other averages
\begin{align*}
&\Avg_m\Psi (\log L_{\mathbb{P}_q}(\Sym_f^r,\sigma))
\\
=&
\lim_{X \to \infty}
\frac{1}{\pi(X)} \sum_{\substack{q \leq X\\ q :\mathrm{prime}\\ m : \mathrm{fixed}}} 
\sideset{}{'}\sum_{f \in \mathscr{P}_k^\mathrm{E}(q^m)}
\Psi (\log L_{\mathbb{P}_q}(\Sym_f^r, \sigma))
\end{align*}
where $\pi(X)$ denotes the number of prime numbers not larger than $X$, and
\begin{align*}
&\Avg^* \Psi (\log L_{\mathbb{P}_q}(\Sym_f^r,\sigma))
\\
=&
\lim_{X \to \infty}
\frac{1}{\pi^*(X)}
\underset{q^m \leq X}{\sum_{q :\text{prime}}\sum_{1 \leq m}}
\sideset{}{'}\sum_{f \in \mathscr{P}_k^\mathrm{E}(q^m)}
\Psi (\log L_{\mathbb{P}_q}(\Sym_f^r,\sigma)),
\end{align*}
where $\pi^*(X)$ denotes the number of all pairs $(q,m)$ of a prime number $q$ and a positive integer $m$ with $q^m\leq X$.
Theorem~\ref{main1} implies the following corollary.
The proof is the same as that in \cite{mu}.
\begin{cor}\label{main-cor}
Under the same assumptions as Theorem~\ref{main1}, we have
\begin{align*}
&\Avg_m \Psi (\log L_{\mathbb{P}_q}(\Sym_f^r, \sigma))
=
\Avg^* \Psi (\log L_{\mathbb{P}_q}(\Sym_f^r, \sigma))
\\
=&
\int_{\mathbb{R}}\mathcal{M}_{\sigma,\mathbb{P}}(\Sym^r, u)\Psi (u)
\frac{du}{\sqrt{2\pi}}
\end{align*}
when $\Psi$ is bounded.
\end{cor}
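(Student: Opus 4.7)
The plan is to reduce Corollary~\ref{main-cor} to Theorem~\ref{main1} in Case II (prime $q\to\infty$, level $m$ fixed) by a standard Cesàro-type argument, together with a simple counting estimate that controls the tail contribution of pairs with $m\ge 2$. Write
\[
I:=\int_{\mathbb{R}}\mathcal{M}_{\sigma,\mathbb{P}}(\Sym^r,u)\Psi(u)\frac{du}{\sqrt{2\pi}},\qquad
S(q,m):=\sideset{}{'}\sum_{f\in\mathscr{P}_k^{\mathrm{E}}(q^m)}\Psi(\log L_{\mathbb{P}_q}(\Sym_f^r,\sigma)).
\]
By Theorem~\ref{main1} applied in Case II, for every fixed $m\ge 1$ we have $S(q,m)\to I$ as the prime $q\to\infty$. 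To pass from this pointwise statement to the Cesàro averages defining $\Avg_m$ and $\Avg^*$, the first step is a uniform bound: since $|\Psi|\le M$ for some $M$, $|S(q,m)|\le M\cdot\sideset{}{'}\sum_{f\in\mathscr{P}_k(q^m)}1$, and the normalization constant $C_k(1-C_q(m))$ is precisely designed so that the total mass on the right is uniformly bounded in $(q,m)$ via the Petersson-type formula of \cite{ichihara}; call this common bound $B$.

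For $\Avg_m$, I argue by plain Cesàro averaging. Given $\varepsilon>0$, choose $Q$ so that $|S(q,m)-I|<\varepsilon$ for every prime $q>Q$; then for $X$ large,
\[
\Bigl|\frac{1}{\pi(X)}\sum_{q\le X}S(q,m)-I\Bigr|\le \frac{\pi(Q)(B+|I|)}{\pi(X)}+\varepsilon\cdot\frac{\pi(X)-\pi(Q)}{\pi(X)},
\]
and the right-hand side tends to $\varepsilon$ as $X\to\infty$. Letting $\varepsilon\to 0$ gives $\Avg_m\Psi(\cdots)=I$.

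For $\Avg^*$, I split the sum over pairs $(q,m)$ with $q^m\le X$ according to $m=1$ or $m\ge 2$. The number of pairs with $m\ge 2$ is $\sum_{2\le m\le\log_2 X}\pi(X^{1/m})=O(X^{1/2})$, so their total contribution is bounded by $BX^{1/2}$, which is $o(\pi^*(X))$ since $\pi^*(X)\sim\pi(X)\sim X/\log X$. The $m=1$ contribution equals $\frac{\pi(X)}{\pi^*(X)}\cdot\frac{1}{\pi(X)}\sum_{q\le X}S(q,1)$, and since $\pi(X)/\pi^*(X)=1+o(1)$, the previous Cesàro argument yields the same limit $I$. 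Combining the two pieces gives $\Avg^*\Psi(\cdots)=I$, which is the claim of the corollary.

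The main (and essentially only) non-soft ingredient is the uniform bound on $\sideset{}{'}\sum 1$ in $(q,m)$, which must hold outside of the asymptotic regime in which Theorem~\ref{main1} is proven. This should follow directly from the error-term analysis in the Petersson formula for newforms of level $q^m$; the rest of the argument is prime-counting and elementary manipulation of limits. The hypothesis that $\Psi$ is bounded enters only through this uniform bound, which explains why the corollary is stated with that restriction whereas Theorem~\ref{main1} itself allows compactly supported Riemann integrable $\Psi$.
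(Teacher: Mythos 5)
Your proof is correct and takes essentially the same approach the paper has in mind (the paper defers to the proof in \cite{mu}, which is precisely a Ces\`aro-averaging argument using the boundedness of $\Psi$, the normalization bound $\sideset{}{'}\sum 1 = 1 + E(q^m) \ll 1$ from \eqref{P1}, Theorem~\ref{main1} in Case II for the pointwise limit, and the observation that pairs $(q,m)$ with $m\ge 2$ contribute $O(X^{1/2})=o(\pi^*(X))$). One minor remark: the restriction to bounded $\Psi$ in the corollary is not really stronger than the hypotheses of Theorem~\ref{main1}, since compactly supported Riemann integrable functions are also bounded, so the proviso ``when $\Psi$ is bounded'' is simply making the needed uniform bound explicit rather than imposing a genuinely new restriction.
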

After some preparations in Section \ref{sec2}, we will
introduce the $M$-function $\mathcal{M}_{\sigma}(\Sym^r,u)$ and its
Fourier transform in Section \ref{sec3}.
In Section \ref{sec4} we will
state the key lemma (Lemma \ref{keylemma}), and then
complete the proof of Theorem~\ref{main1} and Remark~\ref{abs-conv-case} 
at the end of Section~\ref{sec4}.
The proof of Lemma \ref{keylemma} will be given in Section \ref{sec5}.
The basic stream of the proof is similar to that in \cite{mu}, but a big
difference is that we will use the Sato-Tate measure in the present paper.

The main point of this theorem is the case $r=2$, that is the case of symmetric square $L$-functions.
The case $r=1$ is nothing but the usual $L$-function attached to $f$, and in this case the third-named author \cite{mine} proved a stronger unconditional result.
Nevertheless we include the case $r=1$ in the theorem, because it can be treated in parallel with the case $r=2$, and also, the case $r=1$ is important in view of the next result. 

%
%
Regarding the second main result, we further assume that there are not so many primitive forms $f$ for which $L(\Sym_f^r, s)$ have poles. 
The following assumption is an analogue of \cite[Hypothesis 11.2]{ST2016}, which was originally used to study of low-lying zeros of automorphic $L$-functions in families. 
%
\begin{assumption}[$\mathscr{P}_k^\mathrm{P}(q^m)$ condition]
Let $k$ an even integer which satisfies $2\leq k <12$ or $k=14$. 
For any $r \geq1$, there exists a positive constant 
$\Delta=\Delta(r)$ 
depending at most on $r$ such that 
\begin{gather}\label{hol}
\sideset{}{'}\sum_{f \in \mathscr{P}_k^\mathrm{P}(q^m)}
1
\ll_r q^{-\Delta m}
\end{gather}
holds, where $\ll$ stands for the Vinogradov symbol, the same as Landau's $O$-symbol.
(The suffix here means that the constant implied by $\ll$ depends on $r$. 
In what follows, similarly, the implied constants do not depend on parameters which are not written explicitly, unless otherwise indicated.) 
\end{assumption}
\begin{thm}\label{main2}
Let $k$ an even integer which satisfies $2\leq k <12$ or $k=14$.
Let $r$ be any positive integer.
Suppose Assumption (GRH) and Assumption ($\mathscr{P}_k^\mathrm{P}(q^m)$ condition). 
Let $\Psi_1(x)=cx$ with a constant $c\neq 0$.
For $\sigma>1/2$, we obtain
\begin{align}\label{thm2-1}
\Avg \Psi_1 (\log L_{\mathbb{P}_q}(\Sym_f^r,\sigma))
=
\int_{\mathbb{R}}\mathcal{M}_{\sigma,\mathbb{P}_*}(\Sym^{r_0}, u)\Psi_1 (u)
\frac{du}{\sqrt{2\pi}}
\end{align}
where $\mathbb{P}_*=\mathbb{P}_q$ or $\mathbb{P}$ as in Theorem \ref{main1},
and the right-hand side is further 
\begin{align}\label{thm2-2}
=
\begin{cases}
  -2^{-1}\sum_{p\in\mathbb{P}_*}p^{-2\sigma} & r\;\text{is odd},\\
  \sum_{p\in\mathbb{P}_*}\sum_{j>1} j^{-1}p^{-j\sigma} & r\;\text{is even},
\end{cases}
\end{align}
where $r_0$ is $2$ if $r$ is even, $1$ if $r$ is odd.
\end{thm}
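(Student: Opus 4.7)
The strategy is to compute the left-hand side of (\ref{thm2-1}) directly via the Dirichlet series expansion of $\log L_{\mathbb{P}_q}(\Sym_f^r,\sigma)$ together with the Petersson trace formula for newforms, and then to verify that the resulting explicit expression coincides with the integral on the right-hand side.

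\emph{Dirichlet expansion.} From (\ref{log-L-P-def}) and $\alpha_f(p)\beta_f(p)=1$ for $p\in\mathbb{P}_q$ one has
\[
\log L_{\mathbb{P}_q}(\Sym_f^r,\sigma)=\sum_{p\in\mathbb{P}_q}\sum_{j\geq 1}\frac{1}{jp^{j\sigma}}\sum_{h=0}^r\alpha_f^{(r-2h)j}(p).
\]
Writing $\alpha_f(p)=e^{i\theta_p(f)}$, the inner sum equals $U_r(\cos(j\theta_p(f)))$, the $r$-th Chebyshev polynomial of the second kind. For even $r$ this expands as $1+\sum_{m=2,4,\ldots,r}2\cos(mj\theta_p(f))$, and for odd $r$ as $\sum_{m=1,3,\ldots,r}2\cos(mj\theta_p(f))$. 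The identity $2\cos(n\phi)=U_n(\cos\phi)-U_{n-2}(\cos\phi)$ (with $U_{-1}:=0$), together with $\lambda_f(p^n)=U_n(\cos\theta_p(f))$, rewrites each $2\cos(mj\theta_p(f))$ as a $\mathbb{Z}$-linear combination of $\lambda_f(p^n)$'s, with a constant contribution of $-1$ arising precisely when $mj=2$.

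\emph{Petersson averaging and the main term.} Apply Ichihara's Petersson trace formula for newforms of level $q^m$ (cf.\ Remark~\ref{weight_level}), which gives $\sideset{}{'}\sum_{f\in\mathscr{P}_k(q^m)}\lambda_f(p^n)=\delta_{n,0}+\text{error}$, the error decaying as $q^m\to\infty$. The Assumption ($\mathscr{P}_k^{\mathrm{P}}(q^m)$ condition) permits restriction of the sum to $\mathscr{P}_k^{\mathrm{E}}(q^m)$ at the cost of an $O_r(q^{-\Delta m})$ term. Under Assumption (GRH), the double series over $p$ and $j$ is absolutely convergent for $\sigma>1/2$, justifying the interchange of $\lim_{q^m\to\infty}$ with the summations. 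After taking $\Avg$, only the constant parts of the expansion above survive: for odd $r$ the unique contribution is $m=1,\ j=2$, giving $-1$; for even $r$ the constant $+1$ is present for every $j$ and is cancelled only by the $m=2,\ j=1$ term. Multiplying by $c/(jp^{j\sigma})$ and summing yields that the LHS of (\ref{thm2-1}) equals $c$ times the expression displayed in (\ref{thm2-2}).

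\emph{Identification with the integral.} The same computation applied to $r_0\in\{1,2\}$, which has the same parity as $r$, shows that $\Avg\Psi_1(\log L_{\mathbb{P}_q}(\Sym_f^{r_0},\sigma))$ equals the identical explicit sum. The construction of $\mathcal{M}_\sigma(\Sym^{r_0},u)$ in Section~\ref{sec3} via its Fourier transform, combined with the rapid decay noted in Remark~\ref{abs-conv-case}, permits direct evaluation of the first moment $\int_\mathbb{R}\mathcal{M}_\sigma(\Sym^{r_0},u)\,u\,du/\sqrt{2\pi}$ and yields the same value. Hence both sides of (\ref{thm2-1}) coincide, and both equal $c$ times the expression in (\ref{thm2-2}). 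The principal obstacle is the uniform control of the Petersson error terms in $p$ and $j$, especially in Case~I where $q$ is fixed and only $m\to\infty$, so that the double series converges absolutely and the interchange of limit and summation is legitimate; the unboundedness of $\Psi_1$, which lies outside the class handled by Theorem~\ref{main1}, is bypassed by the direct first-moment calculation rather than a direct appeal to Theorem~\ref{main1}.
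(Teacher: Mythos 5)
Your overall strategy mirrors the paper's: Section~\ref{sec-Mine} computes the left-hand side via the Dirichlet expansion of $\log L_{\mathbb{P}_q}(\Sym_f^r,\sigma)$, the Chebyshev expansion of $\gamma(p^j;\Sym_f^r)=\sum_h 2\cos(j(r-2h)\theta_f(p))+\delta_{r,\mathrm{even}}$ in terms of $\lambda_f(p^\ell)=U_\ell(\cos\theta_f(p))$, and Ichihara's Petersson trace formula; Section~\ref{sec-Mine2} evaluates $\int\mathcal{M}_\sigma(\Sym^{r_0},u)\Psi_1(u)\,du/\sqrt{2\pi}$ independently and compares. However, there are two genuine gaps in your proposal.

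\textbf{Gap 1 (Dirichlet series truncation).} Your assertion that ``under Assumption (GRH), the double series over $p$ and $j$ is absolutely convergent for $\sigma>1/2$'' is false in the critical range $1/2<\sigma\leq 1$. The $j=1$ terms contribute $\sum_{p\in\mathbb{P}_q}U_r(\cos\theta_f(p))p^{-\sigma}$, which is not absolutely convergent for $\sigma\leq 1$ since $|U_r|\leq r+1$ and $\sum_p p^{-\sigma}$ diverges. GRH controls zeros, not absolute convergence. To justify the interchange of $\lim_{q^m\to\infty}$ with the sum, the paper proves Lemma~\ref{lem:2}: via Perron's formula, the Borel--Carath\'{e}odory bound of Lemma~\ref{lem:1}, and the conductor/convexity bounds of Section~\ref{sec:sym}, one obtains
\[
\log L_{\mathbb{P}_q}(\Sym_f^r,\sigma)=\sum_{n<y}\frac{\gamma(n;\Sym_f^r)\Lambda(n)}{n^\sigma\log n}+O_{r,d}\!\left(\frac{\log q^m}{(\sigma_1-1/2)^2}y^{\max\{\sigma_1-\sigma,-1\}}\right),
\]
with $y=q^{\theta m}$ chosen so that the error and the tail estimates vanish as $q^m\to\infty$. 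This GRH-dependent truncation is the essential analytical content of Section~\ref{sec-Mine} and is missing from your argument; the choice of $\theta$ must also be compatible with $\Delta$ from the $\mathscr{P}_k^{\mathrm{P}}(q^m)$ condition.

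\textbf{Gap 2 (first moment of the $M$-function).} Your statement that the Fourier construction plus rapid decay ``permits direct evaluation'' of $\int_\mathbb{R}u\,\mathcal{M}_\sigma(\Sym^{r_0},u)\,du/\sqrt{2\pi}$ elides the hard point: $\Psi_1(u)=cu$ is unbounded, so neither Theorem~\ref{main1} nor a naive passage to the limit $y\to\infty$ in $\int\mathcal{M}_{\sigma,\mathcal{P}_q(y)}\Psi_1\,du$ is available without justification (Remark~\ref{abs-conv-case} gives compact support only for $\sigma>1$). The paper supplies Proposition~\ref{prop:}, using L\'evy's continuity theorem together with a domination hypothesis \eqref{eq:cond1}, and Proposition~\ref{propprop}, establishing \eqref{eq:cond1} with $\phi_0(t)=e^{At}$ by explicit Sato--Tate integration; only then does Corollary~\ref{cor:} license the interchange. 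Case~II further requires the convolution identity \eqref{eq:M_Case_I_II} relating $\mathcal{M}_{\sigma,\mathbb{P}}$ and $\mathcal{M}_{\sigma,\mathbb{P}_q}$. None of this appears in your proposal, so the claimed identification of the two sides of \eqref{thm2-1} is not established.
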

We will prove Theorem \ref{main2} in Sections \ref{sec-Mine}
and \ref{sec-Mine2}.
This theorem is proved only for the very special test function
$\Psi_1(x)=cx$, but we may conjecture that \eqref{thm2-1} is valid for
any test function $\Psi$ which satisfies the same condition as in the
statement of Theorem \ref{main1}.
In particular, we may expect a ``parity phenomenon'' for $M$-functions;
$\mathcal{M}_{\sigma}(\Sym^r, u)=\mathcal{M}_{\sigma}(\Sym^{2}, u)$
for all even $r$, and $=\mathcal{M}_{\sigma}(\Sym^{1}, u)$ for all
odd $r$.
%
%
\begin{rem}\label{Psi_1-valid}
Formula \eqref{thm2-1} for $r=1,2$ implies that Theorem \ref{main1} is valid for
$\Psi=\Psi_1$.
If we further assume that \cite[Theorem~1.5]{mu} is also valid for $\Psi=\Psi_1$,
then we can deduce \eqref{thm2-1} for general $r$ from the case $r=1,2$.
This argument, which will
be described in Section \ref{sec-1to2}, gives a reasoning why the above
``parity phenomenon'' exists.
\end{rem}
%
%

%
%
\section{Preparation}\label{sec2}
\par
In the proof of our main Theorem~\ref{main1}, we will use the following formula (\eqref{P} below) for a prime number $q$, which was shown by the fourth-named author~\cite[Lemma~3]{ichihara}, using Petersson's formula.
%
%
\par
When $2\leq k < 12$ or $k=14$, for the primitive form $f$ of weight $k$ and level $q^m$, we have
\begin{equation}\label{P}
\sideset{}{'}\sum_{f \in \mathscr{P}_k(q^m)}
\lambda_f(n)
=
\delta_{1, n}
+
\begin{cases} 
O(n^{(k-1)/2} q^{-k+1/2})
& m=1, \\
O(n^{(k-1)/2} q^{m(-k+1/2)}q^{k-3/2})
& m\geq 2, 
\end{cases}
\end{equation}
where $\delta_{1, n}=1$ if $n=1$ and $0$ otherwise.
%
%
\begin{rem}\label{restriction_weight}
The implied constants on the right-hand side depend on $k$, but in the present paper we restrict $k$ to $2\leq k<12$ or $k=14$, so we may say that the implied constants above are absolute.
This restriction of the range of $k$ is coming from the matter how to construct the basis of the space of old forms in \cite{ichihara} (see \cite[Remarks 1 and 4]{ichihara}).
\end{rem}
We denote the error term in \eqref{P} by $n^{(k-1)/2} E(q^m)$, that is
\[
\sideset{}{'}\sum_{f \in \mathscr{P}_k(q^m)}
\lambda_f(n)
-
\delta_{1, n}
=
n^{(k-1)/2}E(q^m).
\]
Then we have
\begin{equation*} 
E(q^m)\ll q^{-k+1/2}
\end{equation*}
for any $m$, and 
\begin{align}\label{E2}
E(q^m)\ll& \begin{cases}
          q^{-3/2} & m=1,\\
          q^{-5/2} & m=2,\\
         q^{-1-m}   & m\geq 3,
          \end{cases}
\nonumber \\
\ll& q^{-m-1/2}
\end{align}
for any $m \geq 1$.
Also in the case $n=1$, the formula \eqref{P} implies
\begin{equation}\label{P1}
\sideset{}{'}\sum_{f \in \mathscr{P}_k(q^m)}
\lambda_f(1)
=
\sideset{}{'}\sum_{f \in \mathscr{P}_k(q^m)}
1
=
1+E(q^m) \ll 1.
\end{equation}
%
%
\par
Let $\mathcal{P}_q$ be a finite subset of $\mathbb{P}_q$.
For a primitive form $f$ of weight $k$ and level $q^m$, we define
the finite truncation
\[
L_{\mathcal{P}_q}(\Sym_f^r, s)
=
\prod_{p \in \mathcal{P}_q}
\prod_{h=0}^{r}
(1-\alpha_f^{r-h}(p)\beta_f^h(p)p^{-s})^{-1}
\]
for $\sigma > 1/2$.
When $r$ is even, this can be written as
\begin{align*}
L_{\mathcal{P}_q}(\Sym_f^r, s)
=
\prod_{p \in \mathcal{P}_q}
\Big(\prod_{\substack{h=0 \\ r-2h\neq 0}}^{r}
(1-\alpha_f^{r-h}(p)\beta_f^h(p)p^{-s})^{-1}
\Big)
(1-p^{-s})^{-1}.
\end{align*}
Since $\alpha_f^{r-h}(p)\beta_f^h(p)=\alpha_f^{r-2h}(p)=\beta_f^{2h-r}(p)$, using $\rho$ we may write
\begin{align}\label{alternative-exp}
\lefteqn{L_{\mathcal{P}_q}(\Sym_f^{r}, s)}\nonumber\\
&=
\prod_{p\in\mathcal{P}_q}\prod_{h=0}^{\rho}(1-\alpha_f^{r-2h}(p)p^{-s})^{-1}
(1-\beta_f^{r-2h}(p)p^{-s})^{-1}(1-\delta_{r,\text{even}}p^{-s})^{-1},
\end{align}
where $\delta_{r,\text{even}}=1$ if $r$ is even, and $0$ otherwise.
%
%
\par
Let $\mathcal{T}_{\mathcal{P}_q}=\prod_{p\in \mathcal{P}_q} \mathcal{T}$ with $\mathcal{T}=\{t\in\mathbb{C}\mid |t|=1\}$.
For a fixed $\sigma>1/2$ and $t_p\in\mathcal{T}$ we define 
\begin{align*}
g_{\sigma, p}(t_p)=&-\log (1-t_pp^{-\sigma})
\\
\mathscr{G}_{\sigma, p}(t_p)
=&
g_{\sigma, p}(t_p)+g_{\sigma, p}(\overline{t_p})+g_{\sigma, p}(\delta_{r,\text{even}})
\\
=&
-2\log|1-t_p p^{-\sigma}|-\log (1-\delta_{r,\text{even}}p^{-\sigma}),
\end{align*}
where $\overline{t_p}$ is the complex conjugate of $t_p$.
Here $g_{\sigma, p}(t_p)$ is the same symbol as in \cite{mu}.
Putting $t_p=e^{i\eta_p}$, we may also write
\begin{align}\label{cos-expression}
\mathscr{G}_{\sigma, p}(e^{i\eta_p})=
-\log(1-2 p^{-\sigma}\cos\eta_p+p^{-2\sigma})-\log (1-\delta_{r,\text{even}}p^{-\sigma}).
\end{align}
Further, for 
$t_{\mathcal{P}_q}=(t_p)_{p\in\mathcal{P}_q} \in\mathcal{T}_{\mathcal{P}_q}$, let
\begin{align*}
g_{\sigma, \mathcal{P}_q}(t_{\mathcal{P}_q})
=&
\sum_{p\in\mathcal{P}_q} g_{\sigma, p}(t_p),
\\
g_{\sigma, \mathcal{P}_q}(\delta_{r,\text{even}})
=&
\sum_{p\in\mathcal{P}_q} 
g_{\sigma, p}(\delta_{r,\text{even}}),
\\
\mathscr{G}_{\sigma, \mathcal{P}_q}(t_{\mathcal{P}_q})
=&
\sum_{p\in\mathcal{P}_q} \mathscr{G}_{\sigma, p}(t_p)
=
\sum_{p\in\mathcal{P}_q}
g_{\sigma, p}(t_p)
+
\sum_{p\in\mathcal{P}_q}
g_{\sigma, p}(\overline{t_p})
+
\sum_{p\in\mathcal{P}_q} g_{\sigma, p}(\delta_{r,\text{even}})
\\
=&
g_{\sigma, \mathcal{P}_q}(t_{\mathcal{P}_q})
+
g_{\sigma, \mathcal{P}_q}(\overline{t_{\mathcal{P}_q}})
+
g_{\sigma, \mathcal{P}_q}(\delta_{r,\text{even}}).
\end{align*}
Using \eqref{alternative-exp}, for any $\sigma >1/2$ we have 
\begin{align*}
&
\log L_{\mathcal{P}_q} (\Sym_f^{r}, \sigma)
\nonumber\\
=&
\sum_{p \in \mathcal{P}_q}
\sum_{h=0}^{\rho}
\Big(
-\log(1-\alpha_f^{r-2h}(p)p^{-\sigma})
-\log(1-\beta_f^{r-2h}(p)p^{-\sigma})
\Big)
\nonumber\\
&
-
\sum_{p\in\mathcal{P}_q}
\log(1-\delta_{r,\text{even}}p^{-\sigma})
\nonumber\\
=&
\sum_{p \in \mathcal{P}_q}
\sum_{h=0}^{\rho}
\Big(
g_{\sigma, p}(\alpha_f^{r-2h}(p))
+g_{\sigma, p}(\beta_f^{r-2h}(p))
\Big)
+
\sum_{p\in\mathcal{P}_q}
g_{\sigma, p}(\delta_{r,\text{even}})
\nonumber\\
=&
\sum_{h=0}^{\rho}
\Big(
g_{\sigma, \mathcal{P}_q}(\alpha_f^{r-2h}(\mathcal{P}_q))
+g_{\sigma, \mathcal{P}_q}(\beta_f^{r-2h}(\mathcal{P}_q))
\Big)
+
g_{\sigma, \mathcal{P}_q}(\delta_{r,\text{even}}),
\end{align*}
where $\alpha_f^{\mu}(\mathcal{P}_q)=(\alpha_f^{\mu}(p))_{p\in\mathcal{P}_q}$ and $\beta_f^{\mu}(\mathcal{P}_q)=(\beta_f^{\mu}(p))_{p\in\mathcal{P}_q}$.
Especially, in the case $r=1, 2$ which means $\rho=0$, we have
\begin{equation}\label{g_sigma}
\log L_{\mathcal{P}_q} (\Sym_f^r, \sigma)
=
\mathscr{G}_{\sigma, \mathcal{P}_q}(\alpha_f^r(\mathcal{P}_q)).
\end{equation}
We sometimes write $\alpha_f(p)=e^{i\theta_f(p)}$ and $\beta_f(p)=e^{-i\theta_f(p)}$ for $\theta_f(p)\in [0,\pi]$.
\par
In the case $\rho=0$ and $\sigma>1$, we deal with the value $L_{\mathbb{P}_q}(\Sym_f^{r}, s)$ as the limit of the value $L_{\mathcal{P}_q}(\Sym_f^{r}, s)$
as $\mathcal{P}_q$ tends to $\mathbb{P}_q$.
%
In the case $\rho=0$ and $1\geq \sigma>1/2$, we will prove the relation between
$\log L_{\mathbb{P}_q}(\Sym_f^r, \sigma)$ and $\log L_{\mathcal{P}_q}(\Sym_f^r, \sigma)$ with a suitable finite subset $\mathcal{P}_q \subset \mathbb{P}_q$ depending on $q^m$ and will consider the averages of them.
\par

%
%
\section{On the density function ${\mathcal{M}}_{\sigma}(\Sym^r, u)$ and its Fourier transform}\label{sec3}
\par
In this section we first construct the density function ${\mathcal{M}}_{\sigma, \mathcal{P}_q}(\Sym^r, u)$ for a finite set $\mathcal{P}_q \subset \mathbb{P}_q$
in the case $r=1, 2$.
By $|\mathcal{P}_q|$ we denote the number of the elements of $\mathcal{P}_q$.
Let
\[
\Theta_{\mathcal{P}_q}
=\prod_{p\in\mathcal{P}_q} [0, \pi),
\]
and define the modified Sato-Tate measure on $\Theta_{\mathcal{P}_q}$ by
\[
d^{\rm ST} \theta_{\mathcal{P}_q}
=\prod_{p\in\mathcal{P}_q}
\bigg(\frac{2\sin^2 \theta_p}{\pi} d\theta_p\bigg),
\]
where $\theta_{\mathcal{P}_q}=(\theta_p)_{p\in\mathcal{P}_q}\in \Theta_{\mathcal{P}_q}$.
We also define the normalized Haar measure on $\mathcal{T}_{\mathcal{P}_q}$ by
\[
d^H t_{\mathcal{P}_q}
=
\prod_{p\in\mathcal{P}_q} d^H t_p
=
\prod_{p\in\mathcal{P}_q} 
 \frac{dt_p}{2\pi it_p}.
 \]

The following proposition is an analogue of \cite[Proposition 3.1]{mu}, 
with the big difference that we use here the modified Sato-Tate measure.
%
%
\begin{prop}~\label{M_P}
Let $r=1$ or $2$ {\rm(}and hence $\rho=0${\rm)}.
For any $\sigma >1/2$, there exists an $\mathbb{R}$-valued, non-negative function $\mathcal{M}_{\sigma, \mathcal{P}_q}(\Sym^r, u)$ defined on $\mathbb{R}$ which satisfies the following two properties.
\begin{itemize}
\item The support of $\mathcal{M}_{\sigma, \mathcal{P}_q}(\Sym^r, u)$ is compact.
\item For any continuous function $\Psi$ on $\mathbb{R}$, we have
\begin{align}\label{prop-1-1}
&\int_{\mathbb{R}}
\mathcal{M}_{\sigma, \mathcal{P}_q}(\Sym^r, u) \Psi(u)
\frac{du}{\sqrt{2\pi}}
=
\int_{\Theta_{\mathcal{P}_q}}
\Psi\big(
\mathscr{G}_{\sigma, \mathcal{P}_q} (e^{i\theta_{\mathcal{P}_q}r})
\big)
d^{\rm ST} \theta_{\mathcal{P}_q}\nonumber\\
&\qquad=
\int_{\mathcal{T}_{\mathcal{P}_q}} 
\Psi(
\mathscr{G}_{\sigma, \mathcal{P}_q}( t_{\mathcal{P}_q}^r)
)
\prod_{p\in\mathcal{P}_q}
\bigg(\frac{t_p^2 -2 + t_p^{-2}}{-2}\bigg)
d^H t_{\mathcal{P}_q}.
\end{align}
\end{itemize}
In particular, taking $\Psi\equiv 1$ in \eqref{prop-1-1}, we have
\begin{align*} 
\int_{\mathbb{R}} \mathcal{M}_{\sigma, \mathcal{P}_q}(\Sym^r, u) 
\frac{du}{\sqrt{2\pi}}=1. 
\end{align*}
\end{prop}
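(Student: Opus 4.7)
My plan is to construct $\mathcal{M}_{\sigma, \mathcal{P}_q}(\Sym^r, u)$ as the density, with respect to the scaled Lebesgue measure $du/\sqrt{2\pi}$, of the pushforward of the product modified Sato-Tate measure $d^{\rm ST}\theta_{\mathcal{P}_q}$ under the map $\theta_{\mathcal{P}_q} \mapsto \mathscr{G}_{\sigma, \mathcal{P}_q}(e^{i\theta_{\mathcal{P}_q}r})$. Since $\mathscr{G}_{\sigma, \mathcal{P}_q}(e^{i\theta_{\mathcal{P}_q}r}) = \sum_{p \in \mathcal{P}_q} \mathscr{G}_{\sigma, p}(e^{ir\theta_p})$ is additive in the primes while $d^{\rm ST}\theta_{\mathcal{P}_q}$ is a product measure, this pushforward factors as an iterated convolution over $\mathcal{P}_q$, so the bulk of the work reduces to the single-prime case.

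For a single prime $p$, I would observe that $\mathscr{G}_{\sigma, p}(e^{ir\theta_p})$ is a bounded continuous function of $\theta_p \in [0, \pi)$ with image contained in a compact interval $[a_p, b_p]$ (computed directly from \eqref{cos-expression}). Decomposing $[0, \pi)$ into the finitely many subintervals on which this map is strictly monotone (one piece for $r=1$, two pieces meeting at $\theta_p = \pi/2$ for $r=2$) and applying the classical change of variables on each piece, weighted by the Sato-Tate density $(2\sin^2\theta_p)/\pi$, yields a non-negative $L^1$-function $\mathcal{M}_{\sigma, p}(\Sym^r, u)$ supported in $[a_p, b_p]$ satisfying
\[
\int_0^\pi \Psi(\mathscr{G}_{\sigma, p}(e^{ir\theta_p}))\frac{2\sin^2\theta_p}{\pi}\,d\theta_p = \int_{\mathbb{R}} \Psi(u)\,\mathcal{M}_{\sigma, p}(\Sym^r, u)\,\frac{du}{\sqrt{2\pi}}.
\]

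With these in hand, I would define $\mathcal{M}_{\sigma, \mathcal{P}_q}(\Sym^r, \cdot)$ as the iterated convolution of the $\mathcal{M}_{\sigma, p}(\Sym^r, \cdot)$ over $p \in \mathcal{P}_q$, normalised by the appropriate power of $\sqrt{2\pi}$. The first equality in \eqref{prop-1-1} then follows from a Fubini argument exploiting the product structure of $d^{\rm ST}\theta_{\mathcal{P}_q}$; compact support is immediate since the support lies inside the Minkowski sum $[\sum_p a_p, \sum_p b_p]$; non-negativity is inherited from each convolution factor; and the total-mass identity $\int \mathcal{M}_{\sigma, \mathcal{P}_q}(\Sym^r, u)\,du/\sqrt{2\pi} = 1$ follows from substituting $\Psi \equiv 1$ and using that $d^{\rm ST}\theta_{\mathcal{P}_q}$ is a probability measure. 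The second equality in \eqref{prop-1-1} is verified by parametrising each copy of $\mathcal{T}$ via $t_p = e^{i\theta_p}$ with $\theta_p \in [0, 2\pi)$, checking the algebraic identities $(t_p^2 - 2 + t_p^{-2})/(-2) = 2\sin^2\theta_p$ and $d^H t_p = d\theta_p/(2\pi)$, and using the $\theta_p \mapsto -\theta_p$ symmetry of the integrand (arising from $\mathscr{G}_{\sigma, p}(t_p) = \mathscr{G}_{\sigma, p}(\overline{t_p})$) to fold $[0, 2\pi)$ onto $[0, \pi)$, the extra factor of $2$ being absorbed by the Sato-Tate normalisation.

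The main technical obstacle lies in the single-prime change of variables at the critical points of $\theta_p \mapsto \mathscr{G}_{\sigma, p}(e^{ir\theta_p})$, where the Jacobian vanishes. At the boundary critical points $\theta_p \in \{0, \pi\}$ the Sato-Tate weight $\sin^2\theta_p$ vanishes quadratically, which tames the naive inverse-square-root blow-up of the density and in fact produces Hölder-$1/2$ behaviour at the endpoints $a_p, b_p$. For $r = 2$ an additional interior critical point at $\theta_p = \pi/2$ survives, producing an integrable $1/\sqrt{\,\cdot\,}$ singularity in the single-prime density at $u = a_p$ (since the Sato-Tate weight does not vanish there), but this is still in $L^1$ and hence causes no obstruction to forming the iterated convolution that defines $\mathcal{M}_{\sigma, \mathcal{P}_q}(\Sym^r, u)$. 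Once the single-prime construction is in place, the remaining verifications are routine.
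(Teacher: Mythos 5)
Your proposal is correct and follows essentially the same plan as the paper: build the single-prime density $\mathcal{M}_{\sigma,p}$ by changing variables in the Sato--Tate integral, convolve over $p\in\mathcal{P}_q$, deduce the first equality in \eqref{prop-1-1} from the product structure of $d^{\rm ST}\theta_{\mathcal{P}_q}$, and obtain the second by lifting to $\mathcal{T}$ via $t_p=e^{i\theta_p}$ and folding by the $t_p\mapsto\overline{t_p}$ symmetry. The one place you and the paper diverge is the $r=2$ interior critical point: the paper sidesteps the two-to-one covering entirely by defining the one-to-one substitution $\theta_p\mapsto u$ through $\cos\theta_p$ (not $\cos(r\theta_p)$), which produces the auxiliary weight $\tfrac{2}{\pi}\sin^2(\theta_p/r)\,d\theta_p$, and then recovers the true Sato--Tate measure via the rescaling $\theta_p=2\theta_p'$ together with the reflection $\theta_p'\mapsto\pi-\theta_p'$; your direct treatment of the integrable $1/\sqrt{\,\cdot\,}$ singularity at $u=a_p$ is an equally valid, if slightly less slick, way to reach the same density.
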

%
%
\begin{proof}
We construct the function $\mathcal{M}_{\sigma, \mathcal{P}_q}(\Sym^r, u)$ by using a method similar to that in \cite{im_2011}.
In the case $|\mathcal{P}_q|=1$ namely $\mathcal{P}_q=\{p\}$, we define a one-to-one correspondence $\theta_p\mapsto u$ from the interval $[0, \pi)$ to another interval $A(\sigma, p)= (-2\log (1+p^{-\sigma})-\log(1-\delta_{r,\text{even}}p^{-\sigma}), -2\log (1-p^{-\sigma})-\log(1-\delta_{r,\text{even}}p^{-\sigma})] \subset \mathbb{R}$
by 
\begin{align*}
u
=&
-\log(1-2p^{-\sigma}\cos \theta_p+p^{-2\sigma})
-\log(1-\delta_{r,\text{even}}p^{-\sigma})
\\
=&
\begin{cases}
-2\log |1-e^{i\theta_p}p^{-\sigma}| & r=1
\\
-\log(1-(2\cos\theta_p+1)p^{-\sigma}+(2\cos\theta_p+1)p^{-2\sigma}
-p^{-3\sigma}) & r=2.
\end{cases}
\end{align*}
%
%
The definition of $\mathcal{M}_{\sigma, \{p\}}(\Sym^r, u)=\mathcal{M}_{\sigma, p}(\Sym^r, u)$ is 
\[
\mathcal{M}_{\sigma, p}(\Sym^r, u)
=
\begin{cases}
  \displaystyle
\frac{\sqrt{2\pi}|1-e^{i\theta_p}p^{-\sigma}|^2\sin^2(\theta_p/r)}{-\pi p^{-\sigma}\sin\theta_p}
  & \displaystyle u \in A(\sigma,  p),\\
  0 & \text{otherwise},
\end{cases}
\]
where we see that
\[
\frac{\sqrt{2\pi}|1-e^{i\theta_p}p^{-\sigma}|^2\sin^2(\theta_p/r)}{-\pi p^{-\sigma}\sin\theta_p}
=
\begin{cases}
\displaystyle \frac{\sqrt{2\pi}|1-e^{i\theta_p}p^{-\sigma}|^2\sin \theta_p}{-\pi p^{-\sigma}} & r=1\\
\displaystyle \frac{\sqrt{2\pi}|1-e^{i\theta_p}p^{-\sigma}|^2\tan(\theta_p/2)}{-2 \pi p^{-\sigma}} & r=2
\end{cases}
\]
for $\theta_p\in [0, \pi)$.
\par
We show that this function satisfies the properties required by Proposition~\ref{M_P}.
In fact, using 
\[
\frac{du}{d\theta_p}
=
-\frac{2p^{-\sigma} \sin \theta_p}{1-2p^{-\sigma}\cos \theta_p +p^{-2\sigma}}
=
-\frac{2p^{-\sigma} \sin \theta_p}{|1-e^{i \theta_p}p^{-\sigma}|^2},
\]
we obtain 
\begin{align}\label{st_p0}
&
\int_{\mathbb{R}}
\Psi(u)    
\mathcal{M}_{\sigma, p}(\Sym^r, u) \frac{du}{\sqrt{2\pi}}
=
\int_{A(\sigma, p)}
\Psi(u) \mathcal{M}_{\sigma, p}(\Sym^r, u) \frac{du}{\sqrt{2\pi}}
\nonumber\\
=&
\int_0^{\pi}
\Psi\Big(-\log(1-2p^{-\sigma}\cos \theta_p +p^{-2\sigma})
-\log(1-\delta_{r,\text{even}}p^{-\sigma})
\Big)
\nonumber\\
&\times
\frac{|1-e^{i\theta_p}p^{-\sigma}|^2\sin^2(\theta_p/r)}{-\pi p^{-\sigma}\sin\theta_p}
\frac{-2p^{-\sigma} \sin \theta_p}{|1-e^{i \theta_p}p^{-\sigma}|^2}
d\theta_p
\nonumber\\
=&
\int_0^{\pi}
\Psi\Big(-\log(1-2p^{-\sigma}\cos \theta_p +p^{-2\sigma})
-\log(1-\delta_{r,\text{even}}p^{-\sigma})
\Big)
\frac{2\sin^2(\theta_p/r)}{\pi}
d\theta_p.
\end{align} 
If $r=2$, (putting $\theta_p=2\theta_p^{\prime}$) we see that the right-hand side is
\begin{align*}
=
\int_0^{\pi/2}
\Psi\Big(-\log(1-2p^{-\sigma}\cos 2\theta_p^{\prime} +p^{-2\sigma})
-\log(1-p^{-\sigma})
\Big)
\frac{4\sin^2\theta_p^{\prime}}{\pi}
d\theta_p^{\prime},
\end{align*}
which is further equal to
\begin{align*}
\int_0^{\pi}
\Psi\Big(-\log(1-2p^{-\sigma}\cos 2\theta_p +p^{-2\sigma})
-\log(1-p^{-\sigma})
\Big)
\frac{2\sin^2\theta_p}{\pi}
d\theta_p,
\end{align*}
because putting $\theta_p^{\prime\prime}=\pi-\theta_p^{\prime}$ we have
\begin{align*}
&
\int_0^{\pi/2}
\Psi\Big(-\log(1-2p^{-\sigma}\cos 2\theta_p^{\prime} +p^{-2\sigma})
-\log(1-p^{-\sigma})
\Big)
\frac{2\sin^2\theta_p^{\prime}}{\pi}
d\theta_p^{\prime}
\\
=&
\int_{\pi/2}^{\pi}
\Psi\Big(-\log(1-2p^{-\sigma}\cos 2\theta_p^{\prime\prime} +p^{-2\sigma})
-\log(1-p^{-\sigma})
\Big)
\frac{2\sin^2\theta_p^{\prime\prime}}{\pi}
d\theta_p^{\prime\prime}.
\end{align*}
Therefore, for $r=1, 2$, we have
\begin{align}\label{st_p}
&
\int_{\mathbb{R}}
\Psi(u)    
\mathcal{M}_{\sigma, p}(\Sym^r, u) \frac{du}{\sqrt{2\pi}}
\nonumber\\
=&
\int_0^{\pi}
\Psi\Big(-\log(1-2p^{-\sigma}\cos (r\theta_p^{\prime}) +p^{-2\sigma})
-\log(1-\delta_{r,\text{even}}p^{-\sigma})
\Big)
\frac{2\sin^2 \theta_p}{\pi}d\theta_p
\nonumber\\
=&
\int_0^{\pi}
\Psi\big(\mathscr{G}_{\sigma, p}(e^{i \theta_p r})\big)
d^{\rm ST} \theta_p
\end{align}
by \eqref{cos-expression}, where $d^{\rm ST}\theta_p=d^{\rm ST}\theta_{\{p\}}$.
\par
We further calculate the right-hand side of \eqref{st_p}.
Since putting $\eta_p=2\pi-\theta_p$ we have
\begin{align*}
&
\int_{\pi}^{2\pi}
\Psi\Big(-\log(1-2p^{-\sigma}\cos(r\theta_p) +p^{-2\sigma})
-\log(1-\delta_{r,\text{even}}p^{-\sigma})
\Big)
\frac{2\sin^2\theta_p}{\pi}d\theta_p
\\
=&
\int_{0}^{\pi}
\Psi\Big(-\log(1-2p^{-\sigma}\cos(r\eta_p) +p^{-2\sigma})
-\log(1-\delta_{r,\text{even}}p^{-\sigma})
\Big)
d^{\rm ST}\eta_p,
\end{align*}
we find that the right-hand side of \eqref{st_p} is equal to
\begin{align}\label{haar_p}
\frac{1}{2}
&\int_0^{2\pi}
\Psi\Big(-\log(1-2p^{-\sigma}\cos(r\theta_p) +p^{-2\sigma})
-\log(1-\delta_{r,\text{even}}p^{-\sigma})
\Big)
d^{\rm ST} \theta_p
\nonumber\\
=&
\frac{1}{2}
\int_0^{2\pi}
\Psi\big(\mathscr{G}_{\sigma, p}(e^{ir\theta_p})
\big)
\frac{2}{\pi}\big(\frac{e^{i \theta_p}-e^{-i \theta_p}}{2i}\big)^2
d\theta_p
\nonumber\\
=&
\int_0^{2\pi}
\Psi\big(
\mathscr{G}_{\sigma, p}(e^{ir\theta_p})
\big)
\frac{1}{\pi}\big(\frac{e^{2i\theta_p}-2+e^{-2i\theta_p}}{-4}\big)
d\theta_p
\nonumber\\
=&
\int_{\mathcal{T}}
\Psi\big(\mathscr{G}_{\sigma, p}(t_p^r)\big)
\frac{1}{\pi}\big(\frac{t_p^2-2+t_p^{-2}}{-4}\big)\frac{dt_p}{it_p}
\nonumber\\
=&
\int_{\mathcal{T}}
\Psi\big(\mathscr{G}_{\sigma, p} (t_p^r)\big)
\frac{t_p^2-2+t_p^{-2}}{-2} d^Ht_p.
\end{align}
Moreover, choosing $\Psi\equiv 1$ in \eqref{haar_p}, we see that
\begin{align}\label{sosuuhitotunobaai}
\int_{\mathbb{R}}
\mathcal{M}_{\sigma, p}(\Sym^r, u) \frac{du}{\sqrt{2\pi}}
=\int_{\mathcal{T}}\frac{t_p^2-2+t_p^{-2}}{-2} d^Ht_p =1.
\end{align}
Therefore all the assertions of the proposition in the case $|\mathcal{P}_q|=1$
are now established.
\par
In the case $\infty > |\mathcal{P}_q|>1$, we construct the function ${\mathcal{M}}_{\sigma, \mathcal{P}_q}(\Sym^r, u)$ by the convolution product of ${\mathcal{M}}_{\sigma, \mathcal{P}'_q}(\Sym^r, u)$ and ${\mathcal{M}}_{\sigma, p}(\Sym^r, u)$ for $\mathcal{P}_q=\mathcal{P}'_q\cup\{p\} \subset\mathbb{P}_q$ inductively, that is
\[
\mathcal{M}_{\sigma, \mathcal{P}_q}(\Sym^r, u)
=
\int_{\mathbb{R}}
\mathcal{M}_{\sigma, \mathcal{P}'_q}(\Sym^r, u')
\mathcal{M}_{\sigma, p}(\Sym^r, u-u')
\frac{du'}{\sqrt{2\pi}}.
\]
It is easy to show that this function satisfies the statements of Proposition~\ref{M_P}.
\end{proof}
%
%
\par
Next, for the purpose of considering $\displaystyle \lim_{|\mathcal{P}_q|\to \infty} \mathcal{M}_{\sigma, \mathcal{P}_q}(\Sym^r, u)$, we consider the Fourier transform of $\mathcal{M}_{\sigma, \mathcal{P}_q}(\Sym^r, u)$ for $\sigma >1/2$. 
When $\mathcal{P}_q=\{p\}$ and $r=1, 2$, we define $\widetilde{\mathcal{M}}_{\sigma, \mathcal{P}_q}(\Sym^r, u)=\widetilde{\mathcal{M}}_{\sigma, p}(\Sym^r, u)$ by
\[
\widetilde{\mathcal{M}}_{\sigma, p}(\Sym^r, x)
=
\int_{\mathbb{R}} \mathcal{M}_{\sigma, p}(\Sym^r, u) \psi_x(u)
\frac{du}{\sqrt{2\pi}},
\]
where $\psi_x(u)=e^{ixu}$ for $x\in\mathbb{R}$.
From \eqref{st_p0}, we see that
\begin{align*} 
\widetilde{\mathcal{M}}_{\sigma, p}(\textrm{Sym}^r, x)
=
\frac{2}{\pi} \int_0^{\pi}
e^{ixF(\theta)}
\sin^2(\theta/r)d\theta,
\end{align*}
where
$F(\theta)=-\log(1-2p^{-\sigma}\cos\theta+p^{-2\sigma})-\log(1-\delta_{r,\text{even}}p^{-\sigma})$.
%
%
\begin{lem}\label{lemma-M-est}
Let $r=1, 2$.
The following two estimates hold:
\begin{align}\label{M-est-1}
|\widetilde{\mathcal{M}}_{\sigma, p}(\Sym^r, x)|\leq 1
\end{align}
for any prime $p$, and there exists a large $p_0$ such that
\begin{align}\label{M-est-2}
\widetilde{\mathcal{M}}_{\sigma, p}(\Sym^r, x)\ll
\frac{p^{\sigma}}{\sqrt{1+|x|}}
\end{align}
for any $p>p_0$.
\end{lem}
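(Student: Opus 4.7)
The plan is to prove the two estimates in sequence, the first essentially for free and the second by analysing the oscillatory integral representation of $\widetilde{\mathcal{M}}_{\sigma,p}(\Sym^r,x)$. For \eqref{M-est-1}, since $|\psi_x(u)|=1$ and $\mathcal{M}_{\sigma,p}(\Sym^r,u)\geq 0$, the triangle inequality together with the normalization \eqref{sosuuhitotunobaai} immediately gives $|\widetilde{\mathcal{M}}_{\sigma,p}(\Sym^r,x)| \leq \int_{\mathbb{R}}\mathcal{M}_{\sigma,p}(\Sym^r,u)\,du/\sqrt{2\pi} = 1$.

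For \eqref{M-est-2}, the first bound already handles the range $\sqrt{1+|x|}\leq p^\sigma$. In the complementary range I work with $\widetilde{\mathcal{M}}_{\sigma,p}(\Sym^r,x) = \frac{2}{\pi}\int_0^\pi e^{ixF(\theta)}\sin^2(\theta/r)\,d\theta$ together with $F'(\theta) = -2a\sin\theta/(1-2a\cos\theta+a^2)$, where $a := p^{-\sigma}$. When $r=1$, the amplitude $\sin^2\theta$ vanishes at both endpoints $\theta=0,\pi$, where $F'$ also vanishes; the quotient $\sin^2\theta/F'(\theta) = -\sin\theta(1-2a\cos\theta+a^2)/(2a)$ therefore vanishes at the endpoints. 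A single integration by parts in the form $e^{ixF}=(ixF')^{-1}(e^{ixF})'$ kills the boundary terms and leaves an interior integral whose integrand derivative is uniformly of size $O(1/a)$, giving $|\widetilde{\mathcal{M}}_{\sigma,p}(\Sym^1,x)| \ll p^\sigma/|x| \ll p^\sigma/\sqrt{1+|x|}$.

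The case $r=2$ is the main obstacle: the amplitude $\sin^2(\pi/2)=1$ while $F'(\pi)=0$, so the quotient $\sin^2(\theta/2)/F'(\theta) = -\tan(\theta/2)(1-2a\cos\theta+a^2)/(4a)$ diverges at $\theta=\pi$ and naive integration by parts breaks down. I plan to resolve this by a cutoff-and-optimise argument: split $\int_0^\pi = \int_0^{\pi-\epsilon} + \int_{\pi-\epsilon}^\pi$ for a parameter $\epsilon>0$, bound the second trivially by $\epsilon$ using $|\sin^2(\theta/2)|\leq 1$, and integrate by parts on the first. Since $\tan(\theta/2)\sim 2/(\pi-\theta)$ near $\theta=\pi$, the boundary contribution at $\theta=\pi-\epsilon$ is $O(1/(a|x|\epsilon))$; the integrated term, whose integrand derivative has an integrable singularity of order $1/(a(\pi-\theta)^2)$ near $\pi$, is $O(1/(a|x|\epsilon))$ as well. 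This yields $|\widetilde{\mathcal{M}}_{\sigma,p}(\Sym^2,x)| \ll \epsilon + p^\sigma/(|x|\epsilon)$, and optimising at $\epsilon = \sqrt{p^\sigma/|x|}$ gives $\ll p^{\sigma/2}/\sqrt{|x|}$, which is absorbed into $p^\sigma/\sqrt{1+|x|}$ in the regime $\sqrt{1+|x|}>p^\sigma$. The hard part is precisely this balancing: because $\sin^2(\theta/r)$ fails to vanish at the stationary point $\theta=\pi$ when $r=2$, one cannot do a clean single integration by parts and must instead trade off the length of the excised interval against the rate at which the integrand of the integrated term blows up.
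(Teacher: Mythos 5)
Your proof is correct, and for $r=2$ it takes a genuinely different route from the paper. The $r=1$ case and the trivial bound \eqref{M-est-1} are handled essentially as in the paper. For $r=2$, the paper first expands $\sin^2(\theta/2)=\tfrac12(1-\cos\theta)$ and writes $\widetilde{\mathcal{M}}_{\sigma,p}(\Sym^2,x)=\tfrac{1}{\pi}I_1-\tfrac{1}{\pi}I_2$ with $I_1=\int_0^\pi e^{ixF(\theta)}\,d\theta$ and $I_2=\int_0^\pi e^{ixF(\theta)}\cos\theta\,d\theta$; it then estimates $I_1$ by invoking the Jessen--Wintner inequality, and $I_2$ by excising intervals of fixed length $|x|^{-1/2}$ at \emph{both} endpoints (since $\cos\theta$ vanishes at neither endpoint while $F'$ vanishes at both) and applying the first-derivative test on the remainder. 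Your treatment instead keeps the amplitude $\sin^2(\theta/2)$ intact, exploits that it already vanishes at $\theta=0$ so that only the endpoint $\theta=\pi$ is problematic, excises a single interval of adjustable length $\epsilon$ there, and optimises $\epsilon$ after one integration by parts. This is more elementary, avoids the reference to Jessen--Wintner altogether, and even yields the slightly sharper intermediate bound $p^{\sigma/2}|x|^{-1/2}$ (compared with the $p^{\sigma}|x|^{-1/2}$ coming from $I_2$ in the paper), which is more than the lemma requires. Two small points worth tightening: the phrase ``integrable singularity'' for $1/(a(\pi-\theta)^2)$ is imprecise, since that function is not integrable up to $\theta=\pi$; what saves you is precisely the cutoff at $\pi-\epsilon$, which truncates the singularity and produces the $1/\epsilon$ factor you use. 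And both the optimisation step (which needs $\epsilon=\sqrt{p^\sigma/|x|}\le 1$) and the final absorption of $p^{\sigma/2}|x|^{-1/2}$ into $p^{\sigma}/\sqrt{1+|x|}$ require $p$ to be sufficiently large; this matches the ``$p>p_0$'' in the statement, but it would be clearer to say so explicitly.
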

%
%
\begin{proof}
The first inequality is easy, because
\begin{align*} 
|\widetilde{\mathcal{M}}_{\sigma, p}(\Sym^r, x)|
\leq&
\frac{2}{\pi }
\int_0^{\pi}
\sin^2(\theta/r)
d\theta
\nonumber\\
=&
  \frac{2}{\pi}
  \int_0^{\pi}
  \frac{1-\cos(2\theta/r)}{2}
d\theta
=1,
\end{align*}
for $r=1, 2$.
To prove the second inequality, we may assume $|x|\geq 1$, because if $|x|<1$,
then \eqref{M-est-2} follows immediately from \eqref{M-est-1}.
We first observe
\begin{align*}
&
\widetilde{\mathcal{M}}_{\sigma, p}(\Sym^r, x)
=
\frac{2}{\pi}\int_0^{\pi} e^{ixF(\theta)}
\sin^2(\theta/r)
d\theta
\\
=&
\begin{cases}
\displaystyle
\frac{2}{\pi}\int_0^{\pi} (e^{ixF(\theta)})'
\frac{(1-2p^{-\sigma}\cos\theta +p^{-2\sigma})}{-2ix p^{-\sigma}}\sin\theta
d\theta & r=1 \\
\displaystyle
\frac{1}{\pi}\int_{0}^{\pi} e^{ixF(\theta)}(1-\cos\theta)d\theta
& r=2,
\end{cases}
\end{align*}
because
\begin{align*}
F^{\prime}(\theta)=\frac{-2p^{-\sigma}\sin\theta}
{1-2p^{-\sigma}\cos\theta+p^{-2\sigma}}.
\end{align*}
When $r=1$, the above is equal to
\[
\frac{2}{\pi}\int_0^{\pi} e^{ixF(\theta)}
\frac{(\cos\theta-2p^{-\sigma}\cos(2\theta) +p^{-2\sigma}\cos\theta)}{2ix p^{-\sigma}}
d\theta
\]
by integration by parts, and hence it is $O(p^{\sigma}|x|^{-1})$ which is sufficient.
When $r=2$, the above is
\[
=\frac{1}{\pi}\int_0^{\pi}e^{ixF(\theta)}d\theta-
\frac{1}{\pi}\int_0^{\pi}e^{ixF(\theta)}\cos\theta d\theta
=\frac{1}{\pi}I_1(x)-\frac{1}{\pi}I_2(x),
\]
say.
The estimate
\begin{align}\label{est-I-1}
I_1(x)\ll p^{\sigma/2}|x|^{-1/2}
\end{align}
follows from the Jessen-Wintner inequality (see \cite[Theorem 13]{jw} or \cite[Proposition 7.1]{mu-JNT}) for large $p$.
To evaluate $I_2(x)$, we divide
\begin{align*}
I_2(x)
&=
\frac{2}{\pi}\int_{\substack{0\leq \theta \leq \sqrt{|x|}^{-1}\\\pi-\sqrt{|x|}^{-1}\leq \theta \leq \pi}} e^{ixF(\theta)}\cos\theta d\theta
+
\frac{2}{\pi}\int_{\sqrt{|x|}^{-1}}^{\pi-\sqrt{|x|}^{-1}} e^{ixF(\theta)}\cos\theta d\theta\\
&= I_{21}(x)+I_{22}(x),
\end{align*}
say.
The inequalities
\begin{align*}
\bigg|\frac{xF'(\theta)}{\cos\theta}\bigg|
=
\bigg|\frac{x2p^{-\sigma}\tan\theta}{1-2p^{-\sigma}\cos\theta +p^{-2\sigma}}\bigg|
\geq
\frac{|x|2p^{-\sigma}\theta}{1+2p^{-\sigma}+p^{-2\sigma}}
\\
\bigg|\frac{xF'(\pi-\theta)}{\cos\theta}\bigg|
=
\bigg|\frac{-x2p^{-\sigma}\tan\theta}{1+2p^{-\sigma}\cos\theta +p^{-2\sigma}}\bigg|
\geq
\frac{|x|2p^{-\sigma}\theta}{1+2p^{-\sigma}+p^{-2\sigma}}
\end{align*}
for $0\leq \theta\leq \pi/2$ yield
\begin{align*}
I_{22}(x)
=&
\frac{2}{\pi}\int_{\sqrt{|x|}^{-1}}^{\pi/2} e^{ixF(\theta)}\cos\theta d\theta
+
\frac{2}{\pi}\int_{\pi/2}^{\pi-\sqrt{|x|}^{-1}} e^{ixF(\theta)}\cos\theta d\theta
\\
=&
\frac{2}{\pi}\int_{\sqrt{|x|}^{-1}}^{\pi/2} e^{ixF(\theta)}\cos\theta d\theta
-
\frac{2}{\pi}\int_{\sqrt{|x|}^{-1}}^{\pi/2} e^{ixF(\pi-\theta)}\cos\theta d\theta
\ll
\frac{p^{\sigma}}{\sqrt{|x|}}
\end{align*}
for large $p$ by the first derivative test (Titchmarsh \cite[Lemma 4.3]{tit}), while trivially
\[
|I_{21}(x)|
\leq
\frac{2}{\pi}\int_{\substack{0\leq \theta \leq \sqrt{|x|}^{-1}\\\pi-\sqrt{|x|}^{-1}\leq \theta \leq \pi}} d\theta
\ll
\frac{1}{\sqrt{|x|}}.
\]
Therefore 
\begin{equation}\label{est-I-2}
I_2(x)\ll p^{\sigma}|x|^{-1/2}.
\end{equation}
The desired estimate follows from \eqref{est-I-1} and \eqref{est-I-2}.
\end{proof}
\par
Now we define $\widetilde{\mathcal{M}}_{\sigma, \mathcal{P}_q}(\Sym^r, x)$ for $r=1, 2$ by
\begin{align}\label{FFourier-transf}
\widetilde{\mathcal{M}}_{\sigma, \mathcal{P}_q}(\Sym^r, x)
=
\prod_{p \in \mathcal{P}_q}
\widetilde{\mathcal{M}}_{\sigma, p}(\Sym^r, x).
\end{align}
The estimate \eqref{M-est-1} gives
\begin{equation}\label{trivial_P}
\left|\widetilde{\mathcal{M}}_{\sigma, \mathcal{P}_q}(\Sym^r, x)\right|
\leq
\prod_{p\in\mathcal{P}_q}
\frac{2}{\pi}\int_0^{\pi}\sin^2\theta_p d\theta_p
=1
\end{equation}
for any finite $\mathcal{P}_q$.
On the other hand, let $n\geq 3$, and let $\mathcal{P}_q^{0,n}\subset\mathbb{P}_q$ be a fixed finite set, $|\mathcal{P}_q^{0,n}|=n$, and of which all elements are $>p_0$.
If $\mathcal{P}_q\supset\mathcal{P}_q^{0,n}$, then we apply \eqref{M-est-2} to all $p\in\mathcal{P}_q^{0,n}$ and apply \eqref{M-est-1} to all the other members of $\mathcal{P}_q$ to obtain
\begin{align}\label{combined}
\widetilde{\mathcal{M}}_{\sigma, \mathcal{P}_q}(\Sym^r, x)
=O_n\left(\frac{\prod_{p\in\mathcal{P}_q^{0,n}}p^{\sigma}}{(1+|x|)^{n/2}}\right).
\end{align}
In what follows we mainly use the case $n=3$, so we write $\mathcal{P}_q^{0,3}=\mathcal{P}_q^0$.
Denote the largest element of $\mathcal{P}_q^0$ by $y_0$.
%
%
\par
The next lemma is an analogue of (a), (b) and (c) given in \cite[Section 3, pp. 645--646]{im_2011}.
Here we introduce a parameter $y$.    
We understand the symbol $\displaystyle{\lim_{y\to\infty}}$ in the
following sense:
\begin{align}\label{cases-I-II}
\begin{cases}
y>q\; \text{and}\; y\to\infty \quad \text{(in Case I)}\\
y=y(q)<q \;\text{and}\; y\to\infty \;\text{as} \;q\to\infty \quad 
\text{(in Case II)}.
\end{cases}
\end{align}
%
%
\begin{lem}\label{lem-abc}
Let $r=1, 2$.
The following assertions hold for any $\sigma >1/2$.
\begin{itemize}
\item[{\rm(a)}.]
Let $\mathcal{P}_q \subset \mathbb{P}_q$ be a finite subset with $\mathcal{P}_q\supset\mathcal{P}_q^0$.
Then $\widetilde{\mathcal{M}}_{\sigma, \mathcal{P}_q}(\Sym^r, x) \in L^t$ $(t\in [1,\infty])$.
\item[{\rm(b)}.]
For any finite subsets $\mathcal{P}''_q$ and $\mathcal{P}'_q$ of $\mathbb{P}_q$ with $\mathcal{P}''_q\subset \mathcal{P}'_q$, we have
\[
\left|\widetilde{\mathcal{M}}_{\sigma, \mathcal{P}'_q}(\Sym^r, x)\right|
\leq
\left|\widetilde{\mathcal{M}}_{\sigma, \mathcal{P}''_q}(\Sym^r, x)\right|.
\]
\item[{\rm(c)}.]
Put $\mathcal{P}_q(y)=\{ p \in \mathbb{P}_q \mid  p \leq y \} $.
The limit $\displaystyle \lim_{y\to\infty} \widetilde{\mathcal{M}}_{\sigma, \mathcal{P}_q(y)}(\Sym^r, x)$ exists, which we denote by 
$\widetilde{\mathcal{M}}_{\sigma}(\Sym^r, x)=
\widetilde{\mathcal{M}}_{\sigma,\mathbb{P}_*}(\Sym^r, x)$.
For any $a >0$, this convergence is uniform on $|x|\leq a$.
Moreover the infinite product  
$$
\prod_{p\in\mathbb{P}_*}\widetilde{\mathcal{M}}_{\sigma, p}(\Sym^r, x)
=\lim_{y\to\infty}
\prod_{p\in\mathcal{P}_q(y)}\widetilde{\mathcal{M}}_{\sigma, p}(\Sym^r, x)
$$
is absolutely convergent, and is equal to
$\widetilde{\mathcal{M}}_{\sigma}(\Sym^r, x)$.
\end{itemize}
\end{lem}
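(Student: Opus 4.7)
My plan is to dispatch (a) and (b) quickly from the estimates already in hand, and to spend almost all the effort on (c), where the single-prime asymptotic
\[
\widetilde{\mathcal{M}}_{\sigma,p}(\Sym^r,x)-1 = O_a(p^{-2\sigma})\quad\text{uniformly on } |x|\le a
\]
is the crux. Part (a) follows by combining \eqref{combined} with $n=3$ (which is legal because $\mathcal{P}_q\supset\mathcal{P}_q^0$) and the $L^\infty$ bound \eqref{trivial_P}: the former gives decay $(1+|x|)^{-3/2}$, which is in every $L^t$ with $t\ge 1$, and the latter covers the $L^\infty$ case. Part (b) is immediate from the factorization $\widetilde{\mathcal{M}}_{\sigma,\mathcal{P}'_q} = \widetilde{\mathcal{M}}_{\sigma,\mathcal{P}''_q}\prod_{p\in\mathcal{P}'_q\setminus\mathcal{P}''_q}\widetilde{\mathcal{M}}_{\sigma,p}$ together with \eqref{M-est-1}.

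For (c), the main technical estimate is proved by writing, for $p$ large,
\[
\widetilde{\mathcal{M}}_{\sigma,p}(\Sym^r,x)-1 = \frac{2}{\pi}\int_0^\pi \bigl(e^{ixF(\theta)}-1\bigr)\sin^2(\theta/r)\,d\theta,
\]
using the normalization $\frac{2}{\pi}\int_0^\pi\sin^2(\theta/r)d\theta=1$, and expanding $e^{ixF(\theta)}-1 = ixF(\theta) + O(x^2 F(\theta)^2)$. Since $F(\theta)=O(p^{-\sigma})$ uniformly in $\theta$ for $p$ large, the quadratic remainder contributes $O_a(p^{-2\sigma})$ for $|x|\le a$. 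The linear term requires computing $\frac{2}{\pi}\int_0^\pi F(\theta)\sin^2(\theta/r)\,d\theta$. Inserting the Mercator expansion
\[
-\log(1-2p^{-\sigma}\cos\theta + p^{-2\sigma}) = 2\sum_{n\ge 1}\frac{\cos(n\theta)}{np^{n\sigma}}
\]
and $\sin^2(\theta/r)=(1-\cos(2\theta/r))/2$, orthogonality of cosines on $[0,\pi]$ selects exactly one Fourier coefficient: for $r=1$ the surviving coefficient sits at $n=2$, giving $O(p^{-2\sigma})$ directly; for $r=2$ the surviving coefficient is at $n=1$ and produces $-p^{-\sigma}$, which is precisely cancelled by the constant term $-\log(1-p^{-\sigma}) = p^{-\sigma}+O(p^{-2\sigma})$ that appears inside $F$ when $r$ is even. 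Either way the linear contribution is $O_a(p^{-2\sigma})$ on $|x|\le a$, and the single-prime estimate follows.

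Once this estimate is in place, $\sum_{p\in\mathbb{P}_*}\sup_{|x|\le a}|\widetilde{\mathcal{M}}_{\sigma,p}(\Sym^r,x)-1|\ll_a \sum_{p} p^{-2\sigma} < \infty$, because $\sigma>1/2$. By the standard criterion for absolute convergence of infinite products, $\prod_{p\in\mathbb{P}_*}\widetilde{\mathcal{M}}_{\sigma,p}(\Sym^r,x)$ converges absolutely and uniformly on every compact set $|x|\le a$; the factors corresponding to the finitely many small primes in $\mathcal{P}_q^0$ are continuous and cause no difficulty. It remains to verify that as $y\to\infty$ in each of Cases I and II, $\mathcal{P}_q(y)$ exhausts $\mathbb{P}_*$: in Case I the condition $y>q$ eventually includes every prime in $\mathbb{P}_q$ (since $q$ itself is excluded from $\mathbb{P}_q$), while in Case II the condition $y=y(q)<q$ with $y\to\infty$ sweeps out all of $\mathbb{P}$. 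Hence $\lim_y\widetilde{\mathcal{M}}_{\sigma,\mathcal{P}_q(y)}(\Sym^r,x) = \prod_{p\in\mathbb{P}_*}\widetilde{\mathcal{M}}_{\sigma,p}(\Sym^r,x) = \widetilde{\mathcal{M}}_{\sigma,\mathbb{P}_*}(\Sym^r,x)$, as claimed.

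The principal obstacle is extracting the gain from $p^{-\sigma}$ to $p^{-2\sigma}$ in the linear-in-$x$ moment; a naive pointwise bound only gives $|\widetilde{\mathcal{M}}_{\sigma,p}-1|\ll_a p^{-\sigma}$, which is summable only for $\sigma>1$ and so confines the argument to the region of absolute convergence of the Euler product. The exact cancellation between the $n=1$ Mercator coefficient of the even-symmetric-square log-factor and the constant Satake contribution $-\log(1-p^{-\sigma})$ (for $r=2$), and the fact that the $n=1$ coefficient simply does not appear when $r=1$, is what brings us down to $p^{-2\sigma}$ and thereby extends the proof to the full critical range $\sigma>1/2$.
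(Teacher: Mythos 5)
Your proposal is correct and follows essentially the same route as the paper: parts (a) and (b) come from \eqref{trivial_P}, \eqref{combined}, \eqref{M-est-1} exactly as in the paper, and for (c) you prove the same key single-prime estimate $\widetilde{\mathcal{M}}_{\sigma,p}(\Sym^r,x)=1+O_a(p^{-2\sigma})$ via the same cancellation of the $n=1$ Fourier coefficient against the constant $-\log(1-p^{-\sigma})$ in the even case (the paper's \eqref{wp_00}, \eqref{wp_0}, \eqref{kokogadaiji}, \eqref{1+error}). Phrasing the conclusion through absolute convergence of the infinite product is just a repackaging of the paper's telescoping argument \eqref{wp_1}.
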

%
%
\begin{proof}
First prove (a):
By \eqref{trivial_P} and \eqref{combined} (with $n=3$), we find
\begin{align}\label{lem2-a-pr}
\int_{\mathbb{R}}|\widetilde{\mathcal{M}}_{\sigma, \mathcal{P}_q}(\textrm{Sym}^r, x)|^t|dx|
\ll 
\int_{|x|< 1} |dx|
+
\int_{|x|\geq 1}
\frac{\prod_{p\in\mathcal{P}_q^0}p^{t\sigma}}{|x|^{3t/2}} |dx|
< 
\infty
\end{align}
for $t\in [1, \infty)$.
Note that the implied constant here depends only on $\sigma,t$, and $\mathcal{P}_q^0$.
Also from \eqref{trivial_P}, we see that $\sup_x |\widetilde{\mathcal{M}}_{\sigma, \mathcal{P}_q}(\Sym^r, x)| < \infty $.
The assertion (a) is proved.
\par
The property (b) is easy by \eqref{M-est-1}. 
\par
We proceed to the proof of (c).
Let $y_1$ and $y_2$ be integers satisfying the same condition
\eqref{cases-I-II} as $y$, and $y_2\geq y_1$.
Let $p(i)$ $(1\leq i \leq d)$ be all prime numbers in $(y_1, y_2]$ with
\[
y_2\geq p(d) >p(d-1) > \ldots > p(1) > y_1.
\]
Set the following subsets of $\mathbb{P}_q$:
\begin{align*}
\mathcal{P}_q(y_1, 0)=&\mathcal{P}_q(y_1),
\\
\mathcal{P}_q(y_1, 1)=&\mathcal{P}_q(y_1, 0)\cup \{p(1)\},
\\
\vdots &
\\
\mathcal{P}_q(y_1, d)=&\mathcal{P}_q(y_1, d-1)\cup \{p(d)\}.
\end{align*}
First we note that
\begin{align}\label{wp_00}
\mathscr{G}_{\sigma, p}(e^{i\theta})
&=
-\log(1-e^{ir\theta}p^{-\sigma})-\log(1-e^{-ir\theta}p^{-\sigma})
-\log(1-\delta_{r,\text{even}}p^{-\sigma})\nonumber\\
&\ll
\frac{1}{p^{\sigma}}.
\end{align}
Next we show that
\begin{align}\label{wp_0}
-\frac{2}{\pi}\int_0^{\pi}
\mathscr{G}_{\sigma,p}(e^{ir\theta})\sin^2\theta
d\theta
\ll 
\frac{1}{p^{2\sigma}}.
\end{align}
In fact, the left-hand side is
\begin{align}
=&
-\frac{2}{\pi}\int_0^{\pi}
\Big(-\log(1-2p^{-\sigma}\cos(r\theta) +p^{-2\sigma})
-\log(1-\delta_{r,\text{even}}p^{-\sigma})
\Big)
\sin^2{\theta}
d\theta
\nonumber\\
=&
-\frac{1}{\pi}\int_0^{2\pi}
\Big(-\log(1-2p^{-\sigma}\cos(r\theta) +p^{-2\sigma})
-\log(1-\delta_{r,\text{even}}p^{-\sigma})
\Big)
\sin^2{\theta}
d\theta
\nonumber\\
=&
-\frac{1}{\pi}
\int_0^{2\pi}
(-\log(1-e^{ir\theta}p^{-\sigma})-\log(1-e^{-ir\theta}p^{-\sigma})
-\log(1-\delta_{r,\text{even}}p^{-\sigma}))
\nonumber\\
&\times
\bigg(\frac{e^{2i\theta}-2+e^{-2i\theta}}{-4}\bigg)
d\theta
\nonumber\\
=&
\frac{1}{\pi}
\int_0^{2\pi}
\bigg(
\sum_{h=1}^{\infty}\frac{e^{hir\theta}}{hp^{h\sigma}}
+\sum_{k=1}^{\infty}\frac{e^{-k ir\theta}}{k p^{k\sigma}}
+\sum_{\ell=1}^{\infty}\frac{\delta_{r,\text{even}}}{\ell p^{\ell\sigma}}
\bigg)
\bigg(\frac{e^{2i\theta}-2+e^{-2i\theta}}{+4}\bigg)
d\theta.
\nonumber
\end{align}
We compute the right-hand side of the above termwisely.
When $r=1$, only the contributions of the terms corresponding to $h=2$ and $k=2$
remain, and the right-hand side is
\[
=\frac{1}{4\pi}\int_0^{2\pi}\left(\frac{1}{2p^{2\sigma}}+\frac{1}{2p^{2\sigma}}
\right) d\theta=\frac{1}{2p^{2\sigma}}.
\]
When $r=2$, the remaining terms are those corresponding to $h=1$, $k=1$ and $\ell=1,2,\ldots$, and hence the right-hand side is
\begin{align}\label{kokogadaiji}
=\frac{1}{4\pi}\int_0^{2\pi}
\Big(\frac{1}{p^{\sigma}}+\frac{1}{p^{\sigma}}
-2\sum_{\ell=1}^{\infty}\frac{1}{\ell p^{\ell\sigma}}
\Big)
d\theta
=-\sum_{\ell=2}^{\infty}\frac{1}{\ell p^{\ell\sigma}}
\ll \frac{1}{p^{2\sigma}}.
\end{align}
Therefore we obtain \eqref{wp_0}.
(Here, it is an important point in the proof that the terms of the form $1/p^{\sigma}$ are finally cancelled with each other in the computations \eqref{kokogadaiji}.) 
\par
Now we evaluate the following difference:
\begin{align}\label{wp_1}
&
\left|
\widetilde{\mathcal{M}}_{\sigma, \mathcal{P}_q(y_1, d)}(\Sym^r, x)
-
\widetilde{\mathcal{M}}_{\sigma, \mathcal{P}_q(y_1, 0)}(\Sym^r, x)
\right|
\nonumber\\
\leq &
\sum_{i=1}^{d}
\left|
\widetilde{\mathcal{M}}_{\sigma, \mathcal{P}_q(y_1, i)}(\Sym^r, x)
-
\widetilde{\mathcal{M}}_{\sigma, \mathcal{P}_q(y_1, i-1)}(\Sym^r, x)
\right|
\nonumber\\
\leq&
\sum_{i=1}^{d}
\left|
\widetilde{\mathcal{M}}_{\sigma, \mathcal{P}_q(y_1, i-1)}(\Sym^r, x)
\right|
\left|
\widetilde{\mathcal{M}}_{\sigma, p(i)}(\Sym^r, x)
-1
\right|
\nonumber\\
\leq&
\sum_{i=1}^{d}
\left|
\widetilde{\mathcal{M}}_{\sigma, p(i)}(\Sym^r, x)
-1
\right|,
\end{align}
where on the last inequality we used \eqref{M-est-1}.
Here we show
\begin{align}\label{1+error}
\widetilde{\mathcal{M}}_{\sigma, p}(\Sym^r, x)
=1+O\bigg(\frac{|x|+|x|^2}{p^{2\sigma}}\bigg).
\end{align}
In fact, applying \eqref{st_p} (with $\Psi=\psi_x$), we have
\begin{align*}
&\widetilde{\mathcal{M}}_{\sigma, p}(\Sym^r, x)
=\frac{2}{\pi}\int_0^{\pi}
\psi_x\Big(\mathscr{G}_{\sigma, p}(e^{ir\theta})
\Big)\sin^2\theta d\theta\\
&=\frac{2}{\pi}\int_0^{\pi}
(1+ix\mathscr{G}_{\sigma, p}(e^{ir\theta})
+O(|x|^2|\mathscr{G}_{\sigma, p}(e^{ir\theta})|^2)
)\sin^2\theta d\theta,
\end{align*}
and hence, applying \eqref{wp_00}, \eqref{wp_0} and the fact
\[
\frac{2}{\pi}\int_0^{\pi}\sin^2\theta d\theta=1,
\]
we obtain \eqref{1+error}.
From \eqref{1+error} we find that the right-hand side of \eqref{wp_1} is
\begin{align*}
\ll&
\sum_{i=1}^{d}\frac{|x|+|x|^2}{p(i)^{2\sigma}}.
\end{align*}
Therefore, for any $\varepsilon>0$, there exists large number $y=y(x,\sigma,\varepsilon)$, satisfying \eqref{cases-I-II} and $y_2 >y_1 > y$, such that
\[
\left|
\widetilde{\mathcal{M}}_{\sigma, \mathcal{P}_q(y_2)}(\Sym^r, x)
-
\widetilde{\mathcal{M}}_{\sigma, \mathcal{P}_q(y_1)}(\Sym^r, x)
\right|
<\varepsilon
\]
for any $\sigma>1/2$.
This implies that the limit
\[
\lim_{y\to\infty} \widetilde{\mathcal{M}}_{\sigma, \mathcal{P}_q(y)}(\Sym^r, x)
=\lim_{y\to\infty} \prod_{p\in \mathcal{P}_q(y)}\widetilde{\mathcal{M}}_{\sigma, p}
(\Sym^r, x)
\]
exists, 
and its convergence is uniform in $|x|\leq a$ for any real number $a\in\mathbb{R}$.
Note that as $y\to\infty$, $\mathcal{P}_q(y) \to \mathbb{P}_q$ in Case I,
and $\mathcal{P}_q(y) \to \mathbb{P}$ in Case II.
Moreover, because of \eqref{1+error}, it follows that the infinite product $\prod_{p\in\mathbb{P}_*} \widetilde{\mathcal{M}}_{\sigma, p}(\Sym^r, x)$ is absolutely convergent and it is uniform in $|x|\leq a$ for any $\sigma > 1/2$.
\end{proof}
%
%
\begin{rem}\label{M-p-conti}
For $r=1, 2$, the Fourier inverse transform gives
\begin{align}\label{p-Fourierinverse}
\int_{\mathbb{R}}
\widetilde{\mathcal{M}}_{\sigma, \mathcal{P}_q}(\Sym^r, x) \psi_{-u}(x)
\frac{dx}{\sqrt{2\pi}}
=
\mathcal{M}_{\sigma, \mathcal{P}_q}(\Sym^r, u).
\end{align}
The case $t=1$ of Lemma~\ref{lem-abc} (a) implies that $\mathcal{M}_{\sigma, \mathcal{P}_q}(\Sym^r, u)$ is continuous if $\mathcal{P}_q\supset\mathcal{P}_q^0$.
\end{rem}
%
%
\begin{rem}\label{M-p-supp}
We have shown in Proposition~\ref{M_P} that the support of the function $\mathcal{M}_{\sigma, \mathcal{P}_q}(\Sym^r, u)$, which we denote by ${\rm Supp}(\mathcal{M}_{\sigma, \mathcal{P}_q})$, is compact.
Here we show that when $\mathcal{P}\supset\mathcal{P}_q^0$, then ${\rm Supp}(\mathcal{M}_{\sigma, \mathcal{P}_q})\subset C_{\sigma}(\mathcal{P}_q)$, where $C_{\sigma}(\mathcal{P}_q)$ denotes the closure of the image of $\mathscr{G}_{\sigma, \mathcal{P}_q}$.
In fact, for any $x\notin C_{\sigma}(\mathcal{P}_q)$, we can choose a non-negative
continuous function $\Psi_x$ such that $\Psi_x(x)>0$ and ${\rm Supp}(\Psi_x)\cap C_{\sigma}(\mathcal{P}_q)=\emptyset$.
Then the right-hand side of \eqref{prop-1-1} with $\Psi=\Psi_x$ is equal to $0$, and so is the left-hand side.
However, if $x\in {\rm Supp}(\mathcal{M}_{\sigma, \mathcal{P}_q})$, then the left-hand side of \eqref{prop-1-1} with $\Psi=\Psi_x$ is positive, because $\mathcal{M}_{\sigma, \mathcal{P}_q}(\Sym^r, u)$ is non-negative and continuous by Remark~\ref{M-p-conti}.
Therefore $x\notin {\rm Supp}(\mathcal{M}_{\sigma, \mathcal{P}_q})$, and hence the assertion.
\end{rem}
Lemma~\ref{lem-abc} yields the next proposition which is the analogue of \cite[Proposition~3.4]{im_2011} and \cite[Proposition~3.2]{mu} (whose idea goes back to \cite{ihara}).
Since the proof is omitted in \cite{im_2011} and \cite{mu}, here we describe the detailed proof.
%
%
\begin{prop}\label{tildeM}
Let $r=1, 2$.
For any $\sigma >1/2$, the convergence of the limit
\[
\widetilde{\mathcal{M}}_{\sigma}(\Sym^r, x)
=
\lim_{y\to\infty}
\widetilde{\mathcal{M}}_{\sigma, \mathcal{P}_q(y)}(\Sym^r, x)
\]
is uniform in $x\in \mathbb{R}$.
Furthermore, this convergence is $L^t$-convergence and the function $\widetilde{\mathcal{M}}_{\sigma}(\Sym^r, x)$ belongs to $ L^t$ $(1\leq t \leq \infty)$.
\end{prop}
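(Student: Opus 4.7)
The approach is to upgrade the compact-set uniform convergence from Lemma \ref{lem-abc}(c) to uniform convergence on all of $\mathbb{R}$ by producing a $y$-independent integrable envelope, and then deduce the $L^t$ statements by dominated convergence. Since all the hard work (monotonicity, decay, pointwise convergence) has already been done in Lemma \ref{lem-abc}, the proof is essentially a packaging exercise.

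First I would fix the reference set $\mathcal{P}_q^0$ ($|\mathcal{P}_q^0|=3$, elements $>p_0$) and combine the trivial bound \eqref{trivial_P} with \eqref{combined} (case $n=3$) to produce a constant $C=C(\sigma,\mathcal{P}_q^0)>0$ such that
\[
\Phi(x):=\min\!\Bigl(1,\,\tfrac{C}{(1+|x|)^{3/2}}\Bigr)
\]
satisfies $|\widetilde{\mathcal{M}}_{\sigma,\mathcal{P}_q^0}(\Sym^r,x)|\leq \Phi(x)$ for all $x\in\mathbb{R}$. By the monotonicity statement Lemma \ref{lem-abc}(b), for every finite $\mathcal{P}_q\supset\mathcal{P}_q^0$ we get $|\widetilde{\mathcal{M}}_{\sigma,\mathcal{P}_q}(\Sym^r,x)|\leq \Phi(x)$, with the bound independent of the enlarged set and of $x$. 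In particular, for $y$ large enough that $\mathcal{P}_q(y)\supset\mathcal{P}_q^0$, the family $\{\widetilde{\mathcal{M}}_{\sigma,\mathcal{P}_q(y)}(\Sym^r,\cdot)\}_y$ is uniformly dominated by $\Phi$; passing to the pointwise limit (which exists by Lemma \ref{lem-abc}(c)) preserves the bound, so also $|\widetilde{\mathcal{M}}_{\sigma}(\Sym^r,x)|\leq \Phi(x)$.

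Next I would deduce uniform convergence on $\mathbb{R}$: given $\varepsilon>0$, pick $A=A(\varepsilon)$ so that $2\Phi(x)<\varepsilon$ for $|x|>A$, which is possible since $\Phi(x)\to 0$ as $|x|\to\infty$. On the tail region $|x|>A$, the triangle inequality together with the envelope gives
\[
\bigl|\widetilde{\mathcal{M}}_{\sigma,\mathcal{P}_q(y)}(\Sym^r,x)-\widetilde{\mathcal{M}}_{\sigma}(\Sym^r,x)\bigr|\leq 2\Phi(x)<\varepsilon,
\]
independently of $y$. On the compact region $|x|\leq A$, Lemma \ref{lem-abc}(c) produces $y_0=y_0(A,\varepsilon)$ such that the same difference is $<\varepsilon$ for $y\geq y_0$. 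Combining the two regions yields uniform convergence on all of $\mathbb{R}$.

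Finally, the $L^t$ statements follow immediately. The envelope $\Phi$ is in $L^t(\mathbb{R})$ for every $1\leq t\leq \infty$, being bounded and decaying like $(1+|x|)^{-3/2}$ at infinity, hence $\widetilde{\mathcal{M}}_\sigma(\Sym^r,\cdot)\in L^t$. For $1\leq t<\infty$, dominated convergence applied to $|\widetilde{\mathcal{M}}_{\sigma,\mathcal{P}_q(y)}(\Sym^r,x)-\widetilde{\mathcal{M}}_{\sigma}(\Sym^r,x)|^t \leq (2\Phi(x))^t$, together with the pointwise convergence from Lemma \ref{lem-abc}(c), yields $L^t$-convergence; the $t=\infty$ case is precisely the uniform convergence just established. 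I do not anticipate a genuine obstacle: the one conceptual point is to notice that \eqref{combined} supplies a tail estimate with constants depending only on $\mathcal{P}_q^0$, so that (b) converts it into a uniform envelope applicable to every $\mathcal{P}_q(y)$ as well as the limit.
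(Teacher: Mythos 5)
Your proof is correct and takes essentially the same approach as the paper: in both cases the key move is to use the decay estimate \eqref{combined} (valid uniformly once $\mathcal{P}_q(y)\supset\mathcal{P}_q^0$) to control the tail $|x|>A$, and Lemma~\ref{lem-abc}(c) to control the compact region $|x|\leq A$. Your phrasing via a single dominating envelope $\Phi$ and dominated convergence is a slightly tidier packaging of the paper's explicit $\varepsilon/2+\varepsilon/2$ splitting with cutoffs $R_t$, $R_\infty$, but the estimates invoked and the overall architecture are the same.
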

%
%
\begin{proof}
By \eqref{lem2-a-pr}, for any $\mathcal{P}_q\supset\mathcal{P}_q^0$, for $t\in [1,\infty)$ we see that
\[
\int_{\mathbb{R}}
\left|
\widetilde{\mathcal{M}}_{\sigma,\mathcal{P}_q}(\Sym^r, x)
\right|^t |dx|
\]
is bounded by a constant depending only on $\sigma,t$, and $\mathcal{P}_q^0$.
Therefore, for any positive real number $\varepsilon>0$, there exists $R_t=R_t(\varepsilon, \sigma, \mathcal{P}_q^0)>1$ such that
\begin{equation}\label{R}
\int_{|x|>R_t}
\left|
\widetilde{\mathcal{M}}_{\sigma,\mathcal{P}_q}(\Sym^r, x)
\right|^t |dx|
<
\frac{\varepsilon}{2^{t+1}}.
\end{equation}
Using \eqref{combined} we can choose $R_{\infty}$ sufficiently large so that
\begin{equation}\label{R1}
\sup_{|x|>R_{\infty}}
\left|\widetilde{\mathcal{M}}_{\sigma,\mathcal{P}_q}(\Sym^r, x)\right|
<
\frac{\varepsilon}{2}
\end{equation}
also holds.
Let $y$ be a positive number satisfying \eqref{cases-I-II} and 
$\mathbb{P}_q \supset \mathcal{P}_q(y) \supset \mathcal{P}_q^0$.   Choose $Y$  
which satisfies $q<y<Y$ (in Case I) or $y=y(q)<Y<q$ (in Case II), and hence
 $\mathbb{P}_q \supset \mathcal{P}_q(Y) \supset \mathcal{P}_q(y)$.
Since \eqref{M-est-1} in Lemma~\ref{lemma-M-est} and (c) of Lemma~\ref{lem-abc} yield that
\begin{align}\label{prop2}
&
\left|
\widetilde{\mathcal{M}}_{\sigma}(\Sym^r, x)
-
\widetilde{\mathcal{M}}_{\sigma, \mathcal{P}_q(y)}(\Sym^r, x)
\right|^t
\nonumber\\
=&
\lim_{Y\to\infty}
\Big|
\prod_{p\in \mathcal{P}_q(Y)}
\widetilde{\mathcal{M}}_{\sigma, p}(\Sym^r, x)
-
\widetilde{\mathcal{M}}_{\sigma, \mathcal{P}_q(y)}(\Sym^r, x)
\Big|^t
\nonumber\\
=&
\lim_{Y\to\infty}
\Big|
\prod_{p\in \mathcal{P}_q(Y)\setminus\mathcal{P}_q(y)}
\widetilde{\mathcal{M}}_{\sigma, p}(\Sym^r, x)
- 1
\Big|^t
\Big|
\widetilde{\mathcal{M}}_{\sigma, \mathcal{P}_q(y)}(\Sym^r, x)
\Big|^t
\nonumber\\
\leq&
\lim_{Y\to\infty}
2^t
\Big|
\widetilde{\mathcal{M}}_{\sigma, \mathcal{P}_q(y)}(\Sym^r, x)
\Big|^t
=
2^t
\Big|
\widetilde{\mathcal{M}}_{\sigma, \mathcal{P}_q(y)}(\Sym^r, x)
\Big|^t,
\end{align}
from \eqref{R} (with $\mathcal{P}_q=\mathcal{P}_q^0$) we see that
\begin{align}\label{p2-proof-1}
&
\int_{|x|>R_t}
\left|
\widetilde{\mathcal{M}}_{\sigma}(\Sym^r, x)
-
\widetilde{\mathcal{M}}_{\sigma, \mathcal{P}_q(y)}(\Sym^r, x)
\right|^t |dx|\nonumber
\\
\leq&
2^t\int_{|x|>R_t}
\Big|\widetilde{\mathcal{M}}_{\sigma, \mathcal{P}_q(y)}(\Sym^r, x)
\Big|^t|dx|
\leq
\frac{\varepsilon}{2}.
\end{align}
From (c), we see that there exists $y'=y'(\varepsilon,\sigma,t,R_t)=y'(\varepsilon,\sigma,t,\mathcal{P}_q^0)>0$ such that
\[
\left|
\widetilde{\mathcal{M}}_{\sigma}(\Sym^r, x)
-
\widetilde{\mathcal{M}}_{\sigma, \mathcal{P}_q(y)}(\Sym^r, x)
\right|
<
\bigg(\frac{\varepsilon}{2}\bigg)^{1/t}
\bigg(\frac{1}{2R_t}\bigg)^{1/t}
\]
for $|x|\leq R_t$ and any $y>y'$.
Then we have
\begin{align}\label{p2-proof-2}
&
\int_{|x|\leq R_t}
\left|
\widetilde{\mathcal{M}}_{\sigma}(\Sym^r, x)
-
\widetilde{\mathcal{M}}_{\sigma, \mathcal{P}_q(y)}(\Sym^r, x)
\right|^t|dx|
\leq
\int_{|x|\leq R_t}\frac{\varepsilon}{4R_t}|dx|
=
\frac{\varepsilon}{2}.
\end{align}
From \eqref{p2-proof-1} and \eqref{p2-proof-2} we have
\[
\int_{\mathbb{R}} \left|
\widetilde{\mathcal{M}}_{\sigma}(\Sym^r, x)
-
\widetilde{\mathcal{M}}_{\sigma, \mathcal{P}_q(y)}(\Sym^r, x)
\right|^t
|dx|
<
\varepsilon,
\]
for any $y>y'$, and so $\widetilde{\mathcal{M}}_{\sigma}(\Sym^r, x)\in L^t$
for $t\in [1, \infty)$.
\par
As for the case $t=\infty$, first notice that there exists $y''=y''(\varepsilon,\sigma,R_{\infty})=y''(\varepsilon,\sigma,\mathcal{P}_q^0)$ with $\mathbb{P}_q \supset \mathcal{P}_q(y'') \supset \mathcal{P}_q^0$ such that
\[
\sup_{|x|\leq R_{\infty}}
\left|
\widetilde{\mathcal{M}}_{\sigma}(\Sym^r, x)
-
\widetilde{\mathcal{M}}_{\sigma,\mathcal{P}_q(y'')}(\Sym^r, x)
\right|
<
\varepsilon
\]
by (c).
From \eqref{R1} and \eqref{prop2} (with $t=1$), we see that
\begin{align*}
&
\sup_{|x|> R_{\infty}}
\left|
\widetilde{\mathcal{M}}_{\sigma}(\Sym^r, x)
-
\widetilde{\mathcal{M}}_{\sigma,\mathcal{P}_q(y'')}(\Sym^r, x)
\right|
\\
<&
2\sup_{|x|> R_{\infty}}
|\widetilde{\mathcal{M}}_{\sigma, \mathcal{P}_q(y'')}(\Sym^r, x)|
<
\varepsilon,
\end{align*}
and hence the case $t=\infty$ follows.
\end{proof}
%
%
\begin{rem}\label{unif-in-sigma}
The convergence in the above proposition is also uniform in $\sigma$, in the region $\sigma\geq 1/2+\varepsilon$ (for any $\varepsilon>0$).
\end{rem}
Let $n\geq 3$.
Recall that \eqref{combined} holds for any $\mathcal{P}_q(y)\supset\mathcal{P}_q^{0,n}$.
Therefore Proposition~\ref{tildeM} implies
\begin{equation}\label{tildeJW}
\widetilde{\mathcal{M}}_{\sigma}(\Sym^r, x)
=O_n\left( (1+|x|)^{-n/2}\right),
\end{equation}
where the implied constant depends on $\sigma$ and $\mathcal{P}_q^{0,n}$.
We also have
\begin{equation}\label{tildeTrivial}
|\widetilde{\mathcal{M}}_{\sigma}(\Sym^r, x)|\leq 1
\end{equation}
from \eqref{trivial_P}.
\par
Finally, we define the function $\mathcal{M}_{\sigma}(\Sym^r, u)
=\mathcal{M}_{\sigma,\mathbb{P}_*}(\Sym^r, u)$ for $r=1, 2$ by
\begin{align}\label{Fourierinverse}
\mathcal{M}_{\sigma}(\Sym^r, u)
=
\int_{\mathbb{R}}\widetilde{\mathcal{M}}_{\sigma}(\Sym^r, x)\psi_{-u}(x)
\frac{dx}{\sqrt{2\pi}}
\end{align}
(analogous to \eqref{p-Fourierinverse}), where we can see that the right-hand side of this equation is absolutely convergent since $\widetilde{\mathcal{M}}_{\sigma}(\Sym^r, x)\in L^1$ (or, more quantitatively, by \eqref{tildeJW}).
%
%
\begin{prop}\label{M}
For $\sigma>1/2$,
the function $\mathcal{M}_{\sigma}(\Sym^r, u)
=\mathcal{M}_{\sigma,\mathbb{P}_*}(\Sym^r, u)$ satisfies the following five properties.
\begin{itemize}
\item[\rm{(1)}] $\displaystyle \lim_{y\to\infty} \mathcal{M}_{\sigma, \mathcal{P}_q(y)}(\Sym^r, u)= \mathcal{M}_{\sigma}(\Sym^r, u)$ and this convergence is uniform in $u$.
\item[\rm{(2)}] The function $\mathcal{M}_{\sigma}(\Sym^r, u)$ is continuous in $u$ and non-negative.
\item[\rm{(3)}] $\displaystyle \lim_{|u| \to\infty} \mathcal{M}_{\sigma}(\Sym^r, u)=0$.
\item[\rm{(4)}] The functions $\mathcal{M}_{\sigma}(\Sym^r, u)$ and $\widetilde{\mathcal{M}}_{\sigma}(\Sym^r, x)$ are Fourier duals of each other.
\item[\rm{(5)}] $\displaystyle \int_{\mathbb{R}} \mathcal{M}_{\sigma}(\Sym^r, u) \frac{du}{\sqrt{2\pi}}=1$.
\end{itemize}
\end{prop}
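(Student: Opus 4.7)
The strategy is to deduce all five properties from the definition \eqref{Fourierinverse} of $\mathcal{M}_{\sigma}(\Sym^r,u)$ as the inverse Fourier transform of $\widetilde{\mathcal{M}}_{\sigma}(\Sym^r,x)$, combined with the analytic results of Proposition~\ref{tildeM} (in particular the $L^1$ and $L^\infty$ convergence of $\widetilde{\mathcal{M}}_{\sigma,\mathcal{P}_q(y)}$ to $\widetilde{\mathcal{M}}_\sigma$) and with the fact from Proposition~\ref{M_P} that each $\mathcal{M}_{\sigma,\mathcal{P}_q(y)}(\Sym^r,u)\,du/\sqrt{2\pi}$ is an honest probability density. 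I would establish (1), (2), (3), (5), and finally (4), in that order.

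For (1), I would subtract the Fourier representations \eqref{p-Fourierinverse} and \eqref{Fourierinverse} and use the pointwise bound
\[
|\mathcal{M}_{\sigma}(\Sym^r,u) - \mathcal{M}_{\sigma,\mathcal{P}_q(y)}(\Sym^r,u)|
\le \frac{1}{\sqrt{2\pi}}\,\|\widetilde{\mathcal{M}}_{\sigma} - \widetilde{\mathcal{M}}_{\sigma,\mathcal{P}_q(y)}\|_{L^1(\mathbb{R})},
\]
whose right-hand side tends to $0$ by Proposition~\ref{tildeM} with $t=1$ and is independent of $u$. Property (2) then drops out: continuity comes from (1) together with the continuity of $\mathcal{M}_{\sigma,\mathcal{P}_q(y)}$ for $\mathcal{P}_q(y)\supset\mathcal{P}_q^0$ noted in Remark~\ref{M-p-conti} (alternatively, directly from $\widetilde{\mathcal{M}}_\sigma\in L^1$ via dominated convergence inside \eqref{Fourierinverse}), while non-negativity is inherited pointwise from that of $\mathcal{M}_{\sigma,\mathcal{P}_q(y)}$ in Proposition~\ref{M_P}. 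Property (3) is the Riemann--Lebesgue lemma applied to $\widetilde{\mathcal{M}}_{\sigma}\in L^1(\mathbb{R})$.

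The main obstacle is (5): Fatou's lemma combined with (1) only yields $\int\mathcal{M}_\sigma\,du/\sqrt{2\pi}\le 1$, and one must rule out loss of mass at infinity. I would invoke L\'evy's continuity theorem. The measures $\mathcal{M}_{\sigma,\mathcal{P}_q(y)}(\Sym^r,u)\,du/\sqrt{2\pi}$ are probability measures on $\mathbb{R}$ whose characteristic functions $\widetilde{\mathcal{M}}_{\sigma,\mathcal{P}_q(y)}$ converge uniformly on compacta, by Lemma~\ref{lem-abc}(c), to $\widetilde{\mathcal{M}}_\sigma$, which is continuous on $\mathbb{R}$ and satisfies $\widetilde{\mathcal{M}}_\sigma(\Sym^r,0)=\prod_{p\in\mathbb{P}_*}\widetilde{\mathcal{M}}_{\sigma,p}(\Sym^r,0)=1$ by \eqref{sosuuhitotunobaai}. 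L\'evy's theorem then forces tightness and weak convergence to a probability measure whose characteristic function is $\widetilde{\mathcal{M}}_\sigma$; since $\widetilde{\mathcal{M}}_\sigma\in L^1$, Fourier inversion identifies its density with $\mathcal{M}_\sigma/\sqrt{2\pi}$, giving (5). A more hands-on alternative would be to bound the second moment of $\mathcal{M}_{\sigma,\mathcal{P}_q(y)}$ uniformly using $\sum_p p^{-2\sigma}<\infty$ (valid since $\sigma>1/2$) and to deduce tightness via Chebyshev's inequality.

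Finally, (4) follows at once: properties (2), (3), (5) place $\mathcal{M}_\sigma$ in $L^1(\mathbb{R})\cap C_0(\mathbb{R})$, and Proposition~\ref{tildeM} gives $\widetilde{\mathcal{M}}_\sigma\in L^1(\mathbb{R})$; the Fourier inversion theorem under these $L^1$-assumptions therefore yields $\widetilde{\mathcal{M}}_\sigma=\mathcal{F}[\mathcal{M}_\sigma]$, complementing the defining identity $\mathcal{M}_\sigma=\mathcal{F}^{-1}[\widetilde{\mathcal{M}}_\sigma]$ of \eqref{Fourierinverse}.
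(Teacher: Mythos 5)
Your argument is correct, and it follows the paper's proof for (1) and (2) essentially verbatim (Fourier representations plus the $L^1$-bound from Proposition~\ref{tildeM}), but diverges on (3), (4), (5). For (3) you invoke the Riemann--Lebesgue lemma on $\widetilde{\mathcal{M}}_\sigma\in L^1$, whereas the paper instead combines the uniform convergence in (1) with the compact support of each $\mathcal{M}_{\sigma,\mathcal{P}_q(y)}$ from Proposition~\ref{M_P}; both routes are valid and comparably short. The more interesting divergence is the order of (4) and (5): the paper first asserts (4) by appealing to $L^2$ Fourier inversion for $\widetilde{\mathcal{M}}_\sigma\in L^2$, then obtains (5) by setting $x=0$ in the resulting identity. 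Read literally, that step requires the pointwise formula $\int_{\mathbb{R}}\mathcal{M}_\sigma(u)\psi_x(u)\,du/\sqrt{2\pi}=\widetilde{\mathcal{M}}_\sigma(x)$ with an absolutely convergent integral, hence already that $\mathcal{M}_\sigma\in L^1$; the paper does not make this explicit (one can supply it via Fatou's lemma applied to the non-negative functions in (1), giving $\int\mathcal{M}_\sigma\,du/\sqrt{2\pi}\le 1$). Your L\'evy-continuity argument sidesteps this completely: the probability measures $\mathcal{M}_{\sigma,\mathcal{P}_q(y)}\,du/\sqrt{2\pi}$ have characteristic functions converging pointwise to the continuous function $\widetilde{\mathcal{M}}_\sigma$ with $\widetilde{\mathcal{M}}_\sigma(0)=1$, so L\'evy's theorem delivers both tightness and identification of the weak limit as a probability measure, whose density (by $L^1$-inversion applied to $\widetilde{\mathcal{M}}_\sigma\in L^1$) is $\mathcal{M}_\sigma/\sqrt{2\pi}$; (5) is immediate and (4) follows since that measure's characteristic function is by construction $\widetilde{\mathcal{M}}_\sigma$. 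Your version is a bit more self-contained on the analytic side and makes the ``no escape of mass'' issue explicit rather than implicit; the paper's version is shorter but relies on the reader filling in the $L^1$ justification. Either is acceptable.
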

%
%
This is the analogue of \cite[Proposition 3.5]{im_2011} and
the proof is similar.
\begin{proof}
\par
(1).
From \eqref{p2-proof-1} with $t=1$ we have
\begin{align*} 
&
\int_{|x|>R_1}
\left|
\widetilde{\mathcal{M}}_{\sigma}(\Sym^r, x)
-
\widetilde{\mathcal{M}}_{\sigma, \mathcal{P}_q(y)}(\Sym^r, x)
\right| |dx|\nonumber
\\
\leq&
2\int_{|x|>R_1}
\Big|\widetilde{\mathcal{M}}_{\sigma, \mathcal{P}_q(y)}(\Sym^r, x)
\Big||dx|
\leq
\frac{\varepsilon}{2}
\end{align*}
for $\mathcal{P}_q^0\subset\mathcal{P}_q(y)$.
On the other hand, from (c), there exists $y'''=y'''(\varepsilon,\sigma,R_1)>0$ such that
\[
\left|
\widetilde{\mathcal{M}}_{\sigma}(\Sym^r, x)
-
\widetilde{\mathcal{M}}_{\sigma, \mathcal{P}_q(y)}(\Sym^r, x)
\right|
<
\frac{\varepsilon}{4R_1}
\]
for any $x$ with $|x|\leq R_1$ and for any $y>y'''$.
Then we have
\[
\int_{|x|< R_1}
\left|
\widetilde{\mathcal{M}}_{\sigma}(\Sym^r, x)
-
\widetilde{\mathcal{M}}_{\sigma, \mathcal{P}_q(y)}(\Sym^r, x)
\right|
dx
<
\frac{\varepsilon}{2}.
\]
Therefore for any $y>y'''$ with $\mathbb{P}_q\supset \mathcal{P}_q(y)\supset\mathcal{P}_q^0$ we have 
\begin{align*}
&
\left|
\mathcal{M}_{\sigma}(\Sym^r, u)
-
\mathcal{M}_{\sigma, \mathcal{P}_q(y)}(\Sym^r, u)
\right|
\\
\leq& 
\int_{\mathbb{R}}
\left|
\widetilde{\mathcal{M}}_{\sigma}(\Sym^r, x)
-
\widetilde{\mathcal{M}}_{\sigma, \mathcal{P}_q(y)}(\Sym^r, x)
\right|
dx
<
\varepsilon.
\end{align*}
\par
(2).
Since $\widetilde{\mathcal{M}}_{\sigma}(\Sym^r, x)\in L^1$ by Proposition~\ref{tildeM}, $\mathcal{M}_{\sigma}(\Sym^r, u)$ is continuous.
Since $\mathcal{M}_{\sigma, \mathcal{P}_q(y)}(\Sym^r, u)$ is a non-negative function for any $y > 1$, $\mathcal{M}_{\sigma}(\Sym^r, u)$ is also non-negative by (1).
\par  
(3).
For any  $\varepsilon>0$, there exists $y>0$ such that
\[
\left|
\mathcal{M}_{\sigma}(\Sym^r, u)
-
\mathcal{M}_{\sigma,\mathcal{P}_q(y)}(\Sym^r, u)
\right|
<
\varepsilon
\]
for any $u$ by (1).
Therefore we see that
\[
\left|
\mathcal{M}_{\sigma}(\Sym^r, u)
\right|
<
\left|
\mathcal{M}_{\sigma,\mathcal{P}_q(y)}(\Sym^r, u)
\right|
+
\varepsilon.
\]
Since the support of $\mathcal{M}_{\sigma,\mathcal{P}_q(y)}(\Sym^r, u)$ is compact, we see that
\[
\lim_{|u|\to\infty} \mathcal{M}_{\sigma}(\Sym^r, u)\leq\varepsilon,
\]
and since $\varepsilon$ is arbitrary, the limit should be $0$.
\par
(4).
We know $\widetilde{\mathcal{M}}_{\sigma}(\Sym^r, x)$ is in $L^2$.
By the inversion formula, we have
\begin{align}\label{prop-3-pr-4}
\int_{\mathbb{R}} \mathcal{M}_{\sigma}(\Sym^r, u)
\psi_x(u)\frac{du}{\sqrt{2\pi}}
=
\widetilde{\mathcal{M}}_{\sigma}(\Sym^r, x).
\end{align}
\par
(5). 
Putting $x=0$ in \eqref{prop-3-pr-4}, we have
\[
\widetilde{\mathcal{M}}_{\sigma}(\Sym^r, 0)
=
\int_{\mathbb{R}} \mathcal{M}_{\sigma}(\Sym^r, u) \frac{du}{\sqrt{2\pi}}.
\]
On the other hand, from (c) we know
\[
\widetilde{\mathcal{M}}_{\sigma}(\Sym^r, 0)
=
\lim_{y\to\infty}
\prod_{p\in \mathbb{P}_q(y)}
\widetilde{\mathcal{M}}_{\sigma, p}(\Sym^r, 0)
\]
and
\[
\widetilde{\mathcal{M}}_{\sigma, p}(\Sym^r, 0)
=
\int_{\mathbb{R}}\mathcal{M}_{\sigma, p}(\Sym^r, u) \frac{du}{\sqrt{2\pi}}
=1
\]
by Proposition~\ref{M_P}.
\end{proof}

%
%
\section{The key lemma and the deduction of Theorem~\ref{main1}}\label{sec4}
\par
Let $r=1$ or $2$ (i.e. $\rho=0$). 
For a fixed $\sigma>1/2$ and a fixed finite set $\mathcal{P}_q$ satisfying $\mathcal{P}_q^0\subset\mathcal{P}_q\subset \mathbb{P}_q$, from \eqref{g_sigma} we see that
\[
\psi_x\circ \mathscr{G}_{\sigma, \mathcal{P}_q}(\alpha_f^r(\mathcal{P}_q))
=
\psi_x(\log L_{\mathcal{P}_q}(\Sym_f^r, \sigma))
\qquad
(r=1,2).
\]
Therefore, to prove our Theorem~\ref{main1}, it is important to consider the average 
\begin{align*}
\Avg(\psi_x\circ \mathscr{G}_{\sigma, \mathcal{P}_q})
=&
\lim_{q^m\to\infty}
\sideset{}{'}\sum_{f \in \mathscr{P}_k(q^m)}
\psi_x\circ \mathscr{G}_{\sigma, \mathcal{P}_q}(\alpha_f^r(\mathcal{P}_q)).
\end{align*}
Here, we remark that $L(\Sym_f^r, s)$ is entire due to $r=1,2$. 
%
%
\begin{rem}\label{rem-P-q-fixed}
We may understand that in Case II, $\mathcal{P}_q^0$ and $\mathcal{P}_q$ are 
actually independent of $q$.
In fact, since we consider the situation $q\to\infty$, we may assume that $q$ is sufficiently large, so is larger than any element of $\mathcal{P}_q^0$ and $\mathcal{P}_q$.
\end{rem}
%
%
\par
Our first aim in this section is to show the following
\begin{lem}\label{key}
Let $\mathcal{P}_q$ be a fixed finite subset of $\mathbb{P}_q$. 
Let $2\leq k < 12$ or $k=14$, and $r=1,2$ (i.e. $\rho=0$).
We have
\begin{align}\label{lem-3-formula}
\Avg(\psi_x \circ \mathscr{G}_{\sigma, \mathcal{P}_q})
=&
\int_{\Theta_{\mathcal{P}_q}}
\psi_x(
\mathscr{G}_{\sigma, \mathcal{P}_q} (e^{i\theta_{\mathcal{P}_q}r})
)
d^{\rm{ST}} \theta_{\mathcal{P}_q}
\notag\\
=&
\int_{\mathcal{T}_{\mathcal{P}_q}}
\psi_x
\left(
\mathscr{G}_{\sigma, \mathcal{P}_q}(t_{\mathcal{P}_q}^r)
\right)
\prod_{p\in\mathcal{P}_q}
\bigg(\frac{t_p^2 -2 + t_p^{-2}}{-2}\bigg)
d^H t_{\mathcal{P}_q}.
\end{align}
The convergence on the left-hand side (as $q^m\to\infty$) is uniform in 
$|x|\leq R$ for any $R>0$.
\end{lem}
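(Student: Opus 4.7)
The plan is to establish the two equalities in \eqref{lem-3-formula} separately. The second is immediate: setting $\Psi=\psi_x$ in \eqref{prop-1-1} of Proposition~\ref{M_P} already equates the two right-hand integrals. The entire task is therefore to prove the first equality, for which I intend to use the method of moments.

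The function $\theta_{\mathcal{P}_q}\mapsto\psi_x(\mathscr{G}_{\sigma,\mathcal{P}_q}(e^{ir\theta_{\mathcal{P}_q}}))$ is bounded, continuous, and even on the compact torus $\Theta_{\mathcal{P}_q}$. Since $\sigma>1/2$ and $\mathcal{P}_q$ is finite, expanding each $\mathscr{G}_{\sigma,p}(e^{ir\theta_p})$ as the absolutely convergent series
\[
\sum_{j\geq 1}\frac{1}{jp^{j\sigma}}\bigl(e^{ijr\theta_p}+e^{-ijr\theta_p}+\delta_{r,\text{even}}\bigr),
\]
substituting into the Taylor series of $\psi_x$, and collecting, one can produce for any $\varepsilon>0$ a trigonometric polynomial
\[
P_{\varepsilon}(\theta_{\mathcal{P}_q})
=\sum_{\mathbf{m}}c_{\mathbf{m}}\prod_{p\in\mathcal{P}_q}\bigl(e^{im_p\theta_p}+e^{-im_p\theta_p}\bigr)
\]
(a finite sum indexed by $\mathbf{m}=(m_p)_{p\in\mathcal{P}_q}$, $m_p\in\mathbb{Z}_{\geq 0}$) such that $\|\psi_x\circ\mathscr{G}_{\sigma,\mathcal{P}_q}(e^{ir\cdot})-P_{\varepsilon}\|_{\infty}<\varepsilon$ uniformly in $|x|\leq R$. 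Combined with \eqref{P1} (which bounds the total weighted mass on the $f$-average side), this reduces the first equality in \eqref{lem-3-formula} to the moment identity
\[
\lim_{q^m\to\infty}\sideset{}{'}\sum_{f\in\mathscr{P}_k(q^m)}\prod_{p\in\mathcal{P}_q}\bigl(\alpha_f^{m_p}(p)+\beta_f^{m_p}(p)\bigr)
=\int_{\Theta_{\mathcal{P}_q}}\prod_{p\in\mathcal{P}_q}\bigl(e^{im_p\theta_p}+e^{-im_p\theta_p}\bigr)\,d^{\rm ST}\theta_{\mathcal{P}_q}
\]
for each multi-index $\mathbf{m}$.

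For the moment identity, the idea is to apply the Hecke relation $\alpha_f^{m}(p)+\beta_f^{m}(p)=\lambda_f(p^m)-\lambda_f(p^{m-2})$ for $m\geq 2$ (with $\lambda_f(p)$ for $m=1$ and $2$ for $m=0$), which follows from $\alpha_f(p)\beta_f(p)=1$ for $p\in\mathbb{P}_q$, together with multiplicativity $\prod_{p\in\mathcal{P}_q}\lambda_f(p^{k_p})=\lambda_f\bigl(\prod_p p^{k_p}\bigr)$. This expands the left-hand side as a finite $\mathbb{Z}$-linear combination of sums $\sideset{}{'}\sum_f\lambda_f(n)$ for $n$ supported on $\mathcal{P}_q$. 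By Petersson's formula \eqref{P}, each such sum tends to $\delta_{n,1}$ as $q^m\to\infty$, the error $O(n^{(k-1)/2}E(q^m))$ vanishing for each fixed $n$ (with only finitely many $n$ involved once $\mathbf{m}$ is fixed). A direct inspection shows that the contribution with $n=1$ from the $p$-th factor is $+2$ when $m_p=0$, $-1$ when $m_p=2$, and $0$ otherwise. This coincides with the Sato-Tate moment $\int_0^\pi(e^{im_p\theta}+e^{-im_p\theta})\,d^{\rm ST}\theta_p=2\delta_{m_p,0}-\delta_{m_p,2}$, computed via $\frac{2}{\pi}\int_0^\pi\cos(m\theta)\sin^2\theta\,d\theta=\delta_{m,0}-\tfrac12\delta_{m,2}$, and the product structure of $d^{\rm ST}\theta_{\mathcal{P}_q}$ closes the identity.

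The main obstacle is the bookkeeping: arranging the trigonometric truncation so that the limit $q^m\to\infty$ may be interchanged with the expansion, uniformly in $|x|\leq R$. This is enabled by the uniform control of the expansion coming from $\sigma>1/2$ and the finiteness of $\mathcal{P}_q$, together with \eqref{P1}. The conceptual novelty, compared with the character-aspect situation of \cite{mu}, is the appearance of the Sato-Tate measure: the pattern $2\delta_{m,0}-\delta_{m,2}$ arises naturally because $\alpha_f^m(p)+\beta_f^m(p)=\lambda_f(p^m)-\lambda_f(p^{m-2})$ has a nontrivial $\lambda_f(1)$ component precisely when $m\in\{0,2\}$.
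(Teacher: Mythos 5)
Your proof is correct and is essentially the approach taken in the paper: both rely on a Taylor-polynomial approximation of $\psi_x\circ\mathscr{G}_{\sigma,\mathcal{P}_q}$ whose size is controlled uniformly in $|x|\le R$, reduce the first equality in \eqref{lem-3-formula} to finitely many moments of $\prod_{p}\bigl(\alpha_f^{m_p}(p)+\beta_f^{m_p}(p)\bigr)$, evaluate those via the Hecke relations and Petersson's formula \eqref{P} (with \eqref{P1} controlling the total mass), and match against the Sato-Tate moments $2\delta_{m_p,0}-\delta_{m_p,2}$, while drawing the second equality directly from Proposition~\ref{M_P} exactly as you do. The only difference is organizational: you phrase the reduction as a clean method-of-moments statement in the angles $\theta_{\mathcal{P}_q}$, whereas the paper approximates each factor $\psi_x\circ g_{\sigma,p}$ by a polynomial $\phi_{\sigma,p}(t;M_p)$ and carries the resulting product $\Phi_{\sigma,\mathcal{P}_q}$ explicitly through both the $f$-average and the Haar-measure integral before observing the two outputs coincide.
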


In Case I, $q$ is fixed on \eqref{lem-3-formula}, and the limit on the
left-hand side is $m\to\infty$.
In Case II, in view of Remark \ref{rem-P-q-fixed}, the right-hand side 
of \eqref{lem-3-formula} is independent of $q$, and the limit on the
left-hand side is $q\to\infty$.

%
%
Lemma \ref{key} is an analogue of \cite[Lemma 4.1]{mu}, and the structure of the proof
is similar.
\begin{proof}
Let $1>\varepsilon'>0$ and $p\in\mathcal{P}_q$.
Considering the Taylor expansion of $g_{\sigma, p}(t)=-\log (1-t p^{-\sigma})$, we find that there exist an $M_p=M_p(\varepsilon',R)\in\mathbb{N}$ and $c_{m_p}=c_{m_p}(x,p,\sigma)\in\mathbb{C}$ such that $\psi_x\circ g_{\sigma, p}$ can be approximated by a polynomial as
\begin{equation}\label{app-uni}
\bigg|
\psi_x \circ g_{\sigma, p}(t)
- 
\sum_{m_p=0}^{M_p} c_{m_p} t^{m_p}
\bigg|
<{\varepsilon'},
\end{equation}
uniformly on $T$ with respect to $t$ and also on $|x|\leq R$ with respect to $x$.
Here $c_0=1$.
Write
\[
\phi_{\sigma, p}(t_p ; M_p)=
\sum_{m_p=0}^{M_p} c_{m_p} t_p^{m_p}
\]
and define
\[
\Phi_{\sigma, \mathcal{P}_q}(t_{\mathcal{P}_q};  M_{\mathcal{P}_q})
=
\prod_{p \in \mathcal{P}_q}
\phi_{\sigma, p}(t_p ; M_p)\phi_{\sigma, p}(t_p^{-1} ; M_p)
\phi_{\sigma, p}(\delta_{r,\text{even}} ; M_p),
\]
where $M_{\mathcal{P}_q}=(M_p)_{p\in\mathcal{P}_q}$.
Let $\varepsilon''>0$.
Choosing $\varepsilon'$ (depending on $|\mathcal{P}_q|$ and $\varepsilon''$) sufficiently small, we obtain $M_{\mathcal{P}_q}$ such that
\begin{equation}\label{app}
|
\psi_x \circ \mathscr{G}_{\sigma, \mathcal{P}_q}(t_{\mathcal{P}_q})
-
\Phi_{\sigma, \mathcal{P}_q}(t_{\mathcal{P}_q} ; M_{\mathcal{P}_q})| 
< \varepsilon'',
\end{equation}
again uniformly on $T_{\mathcal{P}_q}$ with respect to $t_{\mathcal{P}_q}$ and also on $|x|\leq R$ with respect to $x$.
In fact, since
\begin{align*}
&
\psi_x \circ \mathscr{G}_{\sigma, \mathcal{P}_q}(t_{\mathcal{P}_q})
\\
&=
\prod_{p\in\mathcal{P}_q}
\psi_x(g_{\sigma, p}(t_p))\psi_x(g_{\sigma, p}(t_p^{-1}))
\psi_x(g_{\sigma, p}(\delta_{r,\text{even}}))
\\
&=
\prod_{p\in\mathcal{P}_q}
(\phi_{\sigma, p}(t_p ; M_p) +O(\varepsilon'))
(\phi_{\sigma, p}(t_p^{-1} ; M_p)+O(\varepsilon'))
(\phi_{\sigma, p}(\delta_{r,\text{even}} ; M_p)+O(\varepsilon'))
\\
&=\Phi_{\sigma, \mathcal{P}_q}(t_{\mathcal{P}_q} ; M_{\mathcal{P}_q})
+(\text{remainder terms}),
\end{align*}
we obtain \eqref{app}.
Here, in the final stage, we use the fact $\phi_{\sigma, p}=O(1)$ which follows from \eqref{app-uni}.
%
%
\par
First we express the average of the value of $\psi_x\circ \mathscr{G}_{\sigma, \mathcal{P}_q}$ by using $\Phi_{\sigma, \mathcal{P}_q}$.
Let $\varepsilon>0$.
From \eqref{app} with $t_{\mathcal{P}_q}=\alpha_f(\mathcal{P}_q)$, $\varepsilon''=\varepsilon/2$ and \eqref{P1}, we have 
\begin{align*}
&
\left|
\sideset{}{'}\sum_{f \in \mathscr{P}_k(q^m)}
\psi_x \circ \mathscr{G}_{\sigma, \mathcal{P}_q}(\alpha_f^r(\mathcal{P}_q))
-
\sideset{}{'}\sum_{f \in \mathscr{P}_k(q^m)}
\Phi_{\sigma, \mathcal{P}_q}(\alpha_f^r(\mathcal{P}_q) ; M_{\mathcal{P}_q})
\right|
\nonumber\\
< &
\sideset{}{'}\sum_{f \in \mathscr{P}_k(q^m)}
\varepsilon''
=\frac{\varepsilon}{2}(1 +O(E(q^m))).
\end{align*}
Therefore, if $q^m$ is sufficiently large, from \eqref{E2} we see that
\begin{align}\label{step-1}
\left|
\sideset{}{'}\sum_{f \in \mathscr{P}_k(q^m)}
\psi_x \circ \mathscr{G}_{\sigma, \mathcal{P}_q}(\alpha_f^r(\mathcal{P}_q))
-
\sideset{}{'}\sum_{f \in \mathscr{P}_k(q^m)}
\Phi_{\sigma, \mathcal{P}_q}(\alpha_f^r(\mathcal{P}_q); M_{\mathcal{P}_q})
\right|<\varepsilon.
\end{align}
%
%
\par
Next we calculate $\Phi_{\sigma, \mathcal{P}_q}$ as follows;
\begin{align*}
&
\Phi_{\sigma, \mathcal{P}_q}(\alpha_f^r(\mathcal{P}_q) ;  M_{\mathcal{P}_q})
\\
=&
\prod_{ p\in \mathcal{P}_q}
\Big(\sum_{m_p=0}^{M_p}c_{m_p}e^{ im_pr\theta_f(p)}\Big)
\Big(\sum_{n_p=0}^{M_p}c_{n_p}e^{- in_pr\theta_f(p)}\Big)
\Big(\sum_{\ell_p=0}^{M_p}c_{\ell_p}\delta_{r,\text{even}}^{\ell_p}\Big)
\\
=&
\prod_{p\in \mathcal{P}}
\Big(\sum_{\ell_p=0}^{M_p}c_{\ell_p}\delta_{r,\text{even}}^{\ell_p}\Big)
\Big(
\sum_{m_p=0}^{M_p}c_{m_p}^2
+
\sum_{m_p=0}^{M_p}
\sum_{\substack{n_p=0\\m_p \neq n_p}}^{M_p}
c_{m_p}e^{ im_p r\theta_f(p)}
c_{n_p}e^{- in_p r\theta_f(p)}
\Big)
\\
=&
\prod_{p \in \mathcal{P}}
\Big(\sum_{\ell_p=0}^{M_p}c_{\ell_p}\delta_{r,\text{even}}^{\ell_p}\Big)
\Big(
\sum_{m_p=0}^{M_p}c_{m_p}^2
+
\sum_{m_p=0}^{M_p}
\sum_{\substack{n_p=0 \\ m_p<n_p}}^{M_p}
c_{m_p}c_{n_p}
e^{ i(m_p-n_p) r\theta_f(p)}
\\
&+
\sum_{m_p=0}^{M_p}
\sum_{\substack{n_p=0 \\ m_p<n_p}}^{M_p}
c_{m_p}c_{n_p}
e^{ i(n_p-m_p) r\theta_f(p)}
\Big).
\end{align*}
Putting $n_p-m_p=\nu_p$, the right-hand side is equal to
\begin{align*}
&
\prod_{ p \in \mathcal{P}_q}
\Big(\sum_{\ell_p=0}^{M_p}c_{\ell_p}\delta_{r,\text{even}}^{\ell_p}\Big)
\\
&\times
\Big(
\sum_{m_p=0}^{M_p}c_{m_p}^2
+
\sum_{\nu_p=1}^{M_p}
\sum_{n_p=\nu_p}^{M_p}
c_{n_p-\nu_p}c_{n_p}
\big(
e^{ i\nu_p r\theta_f(p)}
+
e^{- i\nu_p r\theta_f(p)}
\big)
\Big)
\\
=&
\prod_{ p \in \mathcal{P}_q}
\Big(\sum_{\ell_p=0}^{M_p}c_{\ell_p}\delta_{r,\text{even}}^{\ell_p}\Big)
\\
&\times
\bigg(
\sum_{m_p=0}^{M_p}c_{m_p}^2
+
\sum_{\nu_p=3}^{M_p}
\sum_{n_p=\nu_p}^{M_p}
c_{n_p-\nu_p}c_{n_p}
\big(
e^{ i\nu_p r\theta_f(p)}
+
e^{- i\nu_p r\theta_f(p)}
\big)
\\
&+
\sum_{n_p=2}^{M_p}
c_{n_p-2}c_{n_p}
\big(
e^{ 2i r\theta_f(p)}
+
e^{- 2i r\theta_f(p)}
\big)
\\
&+
\sum_{n_p=1}^{M_p}
c_{n_p-1}c_{n_p}
\big(
e^{ ir\theta_f(p)}
+
e^{- ir\theta_f(p)}
\big)
\bigg).
\end{align*}
Therefore, in the case $r=1$, using \eqref{euler} we see that
\begin{align}\label{odd}
&
\Phi_{\sigma, \mathcal{P}_q}(\alpha_f(\mathcal{P}_q); M_{\mathcal{P}_q})
\nonumber\\
=&
\prod_{p\in \mathcal{P}_q}
\bigg(
\sum_{m_p=0}^{M_p}c_{m_p}^2
+
\sum_{r_p=3}^{M_p}
\sum_{n_p=\nu_p}^{M_p}
c_{n_p-\nu_p}c_{n_p}
\big(
\lambda_f(p^{\nu_p})
-
\lambda_f(p^{\nu_p-2})
\big)
\nonumber\\
&
+
\sum_{n_p=2}^{M_p}
c_{n_p-2}c_{n_p}
\big(
\lambda_f(p^2)-1
\big)
+
\sum_{n_p=1}^{M_p}
c_{n_p-1}c_{n_p}
\lambda_f(p)
\bigg).
\end{align}
In the case $r=2$, similarly we see that
\begin{align}\label{even}
&
\Phi_{\sigma, \mathcal{P}_q}(\alpha_f^2(\mathcal{P}_q)); M_{\mathcal{P}_q})
\nonumber\\
=&
\prod_{ p \in \mathcal{P}_q}
\Big(\sum_{\ell_p=0}^{M_p}c_{\ell_p}\Big)
\bigg(
\sum_{m_p=0}^{M_p}c_{m_p}^2
+
\sum_{\nu_p=3}^{M_p}
\sum_{n_p=\nu_p}^{M_p}
c_{n_p-r_p}c_{n_p}
\big(
\lambda_f(p^{2\nu_p})
-
\lambda_f(p^{2\nu_p-2})
\big)
\nonumber\\
&+
\sum_{n_p=2}^{M_p}
c_{n_p-2}c_{n_p}
\big(
\lambda_f(p^4)-\lambda_f(p^2)
\big)
+
\sum_{n_p=1}^{M_p}
c_{n_p-1}c_{n_p}
\big(
\lambda_f(p^2)-1
\big)
\bigg)
\end{align}
From \eqref{odd} and \eqref{even}, by using \eqref{P} and the multiplicity of $\lambda_f$, we obtain
\begin{align*}
&
\sideset{}{'}\sum_{f \in \mathscr{P}_k(q^m)}
\Phi_{\sigma, \mathcal{P}_q}(\alpha_f^r(\mathcal{P}_q) ; M_{\mathcal{P}_q})
\\
=&
\prod_{ p\in \mathcal{P}_q}
\Big(\sum_{\ell_p=0}^{M_p}c_{\ell_p}\delta_{r,\text{even}}^{\ell_p}\Big)
\bigg(
\sum_{m_p=0}^{M_p}c_{m_p}^2
-
\sum_{m_p=0}^{M_p-\diamond}
c_{m_p}c_{m_p+\diamond}
\bigg)\\
&\qquad+ O\Big(\prod_{ p\in \mathcal{P}_q}
\Big(\sum_{\ell_p=0}^{M_p}c_{\ell_p}\delta_{r,\text{even}}^{\ell_p}\Big)
E(q^m)
\Big),
\end{align*}
where $\diamond=2$ if $r=1$ and $\diamond=1$ if $r=2$.
In this estimate, the implied constant of the error term depends on $\mathcal{P}_q$, $x$, $\sigma$ and $ M_{\mathcal{P}_q}= M_{\mathcal{P}_q}(\varepsilon',R)$ (hence depends on $\varepsilon$ under the above choice of $\varepsilon'$).
But still, this error term is smaller than $\varepsilon$ for sufficiently large $q^m$.
Combining this with \eqref{step-1}, we obtain
\begin{align}\label{step-2}
&
\Bigg|
\sideset{}{'}\sum_{f \in \mathscr{P}_k(q^m)}
\psi_x \circ \mathscr{G}_{\sigma, \mathcal{P}_q}(\alpha_f^r(\mathcal{P}_q))
\nonumber\\
&\hspace{2cm}
-
\prod_{p \in \mathcal{P}_q}
\Big(\sum_{\ell_p=0}^{M_p}c_{\ell_p}\delta_{r,\text{even}}^{\ell_p}\Big)
\bigg(
\sum_{m_p}^{M_p}c_{m_p}^2
-
\sum_{m_p=0}^{M_p-\diamond}
c_{m_p}c_{m_p+\diamond}
\bigg)
\Bigg|
<
2\varepsilon.
\end{align}
%
%
\par
Lastly we calculate the integral in the statement of Lemma~\ref{key}.
Putting $\Psi=\psi_x$ in \eqref{prop-1-1} of Proposition~\ref{M_P}, we already know that
\begin{align*}
& 
\int_{\Theta_{\mathcal{P}_q}}
\psi_x\big(\mathscr{G}_{\sigma, \mathcal{P}_q}(e^{i \theta_{\mathcal{P}_q}r})\big)
d^{\rm ST}\theta_{\mathcal{P}_q}
\\
=&
\int_{\mathcal{T}_{\mathcal{P}_q}}
\psi_x(\mathscr{G}_{\sigma, \mathcal{P}_q}( t_{\mathcal{P}_q}^r))
\prod_{p\in\mathcal{P}_q}
\bigg(\frac{t_p^2-2+t_p^{-2}}{-2}\bigg)
d^H t_{\mathcal{P}_q},
\end{align*}
so our remaining task is to prove that these integrals are equal to $\Avg(\psi_x\circ \mathscr{G}_{\sigma,\mathcal{P}_q})$.
For any $\varepsilon>0$, using \eqref{app}, we have
\begin{align*}
&
\int_{\mathcal{T}_{\mathcal{P}_q}}
\psi_x(\mathscr{G}_{\sigma, \mathcal{P}_q}(t_{\mathcal{P}_q}^r))
\prod_{p\in\mathcal{P}_q}
\bigg(\frac{t_p^2-2+t_p^{-2}}{-2}\bigg)
d^H t_{\mathcal{P}_q}
\\
=&
\int_{\mathcal{T}_{\mathcal{P}_q}}
\bigg(
\psi_x(\mathscr{G}_{\sigma, \mathcal{P}_q}(t_{\sigma, \mathcal{P}_q}^r))
-
\Phi_{\sigma, \mathcal{P}_q}(t_{\mathcal{P}_q}^r ; M_{\mathcal{P}_q})
\bigg)
\prod_{p\in\mathcal{P}_q}
\bigg(\frac{t_p^2-2+t_p^{-2}}{-2}\bigg)
d^H t_{\mathcal{P}_q}
\\
&+
\int_{\mathcal{T}_{\mathcal{P}_q}}
\Phi_{\sigma, \mathcal{P}_q}(t_{\mathcal{P}_q}^r ; M_{\mathcal{P}_q })
\prod_{p\in\mathcal{P}_q}
\bigg(\frac{t_p^2-2+t_p^{-2}}{-2}\bigg)
d^H t_{\mathcal{P}_q}
\\
=&
\prod_{p\in\mathcal{P}_q}
\int_{\mathcal{T}}
\phi_{\sigma, p}(t_p^r; M_p)\phi_{\sigma, p}(t_p^{-r}; M_p)
\phi_{\sigma, p}(\delta_{r,\text{even}}; M_p)
\bigg(\frac{t_p^2-2+t_p^{-2}}{-2}\bigg)
\frac{dt_p}{2\pi it_p}
\\
&
+
O(\varepsilon).
\end{align*}
Since
\begin{align*}
&
\int_{\mathcal{T}}
\phi_{\sigma, p}(t_p^r; M_p)\phi_{\sigma, p}(t_p^{-r}; M_p)
\phi_{\sigma, p}(\delta_{r,\text{even}}; M_p)
\bigg(\frac{t_p^2-2+t_p^{-2}}{-2}\bigg)
\frac{dt_p}{2\pi it_p}
\\
=&
\int_{\mathcal{T}}
\bigg(
\sum_{m=0}^{M_p}c_mt_p^{mr}
\bigg)
\bigg(
\sum_{n=0}^{M_p}c_nt_p^{-nr}
\bigg)
\bigg(
\sum_{\ell=0}^{M_p}c_n\delta_{r,\text{even}}^{\ell}
\bigg)
\bigg(\frac{t_p^2-2+t_p^{-2}}{-2}\bigg)
\frac{dt_p}{2\pi it_p}
\\
=&
\bigg(
\sum_{\ell=0}^{M_p}c_n\delta_{r,\text{even}}^{\ell}
\bigg)
\int_0^{2\pi}
\bigg(
\sum_{m=0}^{M_p}c_me^{imr\theta}
\bigg)
\bigg(
\sum_{n=0}^{M_p}c_ne^{-inr\theta}
\bigg)
\bigg(\frac{e^{2i\theta}-2+e^{-2i\theta}}{-2}\bigg)
\frac{d\theta}{2\pi}
\\
=&
\bigg(
\sum_{\ell=0}^{M_p}c_n\delta_{r,\text{even}}^{\ell}
\bigg)
\int_0^{2\pi}
\bigg(
\sum_{m=0}^{M_p}c_m^2
+
\sum_{m=0}^{M_p}
\sum_{\substack{n=0\\ m\neq n}}^{M_p}
c_mc_ne^{i(m -n)r\theta}
\bigg)
\\
&\times
\bigg(\frac{e^{2i\theta}-2+e^{-2i\theta}}{-2}\bigg)
\frac{d\theta}{2\pi}
\\
=&
\bigg(
\sum_{\ell=0}^{M_p}c_n\delta_{r,\text{even}}^{\ell}
\bigg)
\begin{cases}
\displaystyle
\sum_{m=0}^{M_p}c_m^2 -\sum_{m=0}^{M_p-2} c_mc_{m+2}
& r=1, \\
\displaystyle
\sum_{m=0}^{M_p}c_m^2 -\sum_{m=0}^{M_p-1} c_mc_{m+1}
& r=2,
\end{cases}
\end{align*}
we have
\begin{align}\label{step-3}
&
\int_{\mathcal{T}_{\mathcal{P}_q}}
\psi_x(\mathscr{G}_{\sigma, \mathcal{P}_q}(t_{\mathcal{P}_q}^r)
\prod_{p\in\mathcal{P}_q}
\bigg(\frac{t_p^2-2+t_p^{-2}}{-2}\bigg)
d^H t_{\mathcal{P}_q}
\nonumber\\
=&
\prod_{p\in\mathcal{P}_q}
\bigg(
\sum_{\ell=0}^{M_p}c_n\delta_{r,\text{even}}^{\ell}
\bigg)
\bigg(
\sum_{m_p=0}^{M_p}c_{m_p}^2
-
\sum_{m_p=0}^{M_p-\diamond}
c_{m_p}c_{m_p+\diamond}
\bigg)
+
O(\varepsilon).
\end{align}
From \eqref{step-2} and \eqref{step-3} we find that the identity in the statement 
of Lemma~\ref{key} holds with the error $O(\varepsilon)$, but this error can be arbitrarily small if we choose sufficiently large $q^m$, so the assertion of Lemma~\ref{key} follows.
\end{proof}
%
%
\par
The next lemma is an analogue of \cite[Lemma 4.3]{mu}.
\begin{lem}\label{keylemma}
Suppose Assumption (GRH), in the case  $2\leq k <12$ or $k=14$, for $\sigma>1/2$ and $r=1,2$, we have
\[
\Avg\psi_x (\log L_{\mathbb{P}_q}(\Sym_f^r, \sigma))
=
\int_{\mathbb{R}}
\mathcal{M}_{\sigma}(\Sym^r, u)
\psi_x(u)\frac{du}{\sqrt{2\pi}},
\]
where $\psi_x(u)=\exp (ixu)$. 
The above convergence is uniform in $|x|\leq R$ for any $R>0$.
\end{lem}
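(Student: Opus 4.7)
The plan is to sandwich $\log L_{\mathbb{P}_q}(\Sym_f^r,\sigma)$ between its prime-truncations $\log L_{\mathcal{P}_q(y)}(\Sym_f^r,\sigma)$, for which Lemma~\ref{key} already computes the averages, and then let $y\to\infty$ using the $M$-function framework of Section~\ref{sec3}. Concretely, the target identity factors through
\[
\Avg\psi_x(\log L_{\mathbb{P}_q}(\Sym_f^r,\sigma))
\;\approx\;
\Avg\psi_x(\log L_{\mathcal{P}_q(y)}(\Sym_f^r,\sigma))
=
\widetilde{\mathcal{M}}_{\sigma,\mathcal{P}_q(y)}(\Sym^r,x)
\;\approx\;
\widetilde{\mathcal{M}}_\sigma(\Sym^r,x),
\]
with Proposition~\ref{M}(4) rewriting the last quantity as the integral on the right-hand side of Lemma~\ref{keylemma}.

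For fixed $y$ with $\mathcal{P}_q(y)\supset\mathcal{P}_q^0$, Lemma~\ref{key} combined with \eqref{prop-1-1} of Proposition~\ref{M_P} (applied with $\Psi=\psi_x$) gives the middle equality,
\[
\Avg\psi_x(\log L_{\mathcal{P}_q(y)}(\Sym_f^r,\sigma))
= \int_{\mathbb{R}}\mathcal{M}_{\sigma,\mathcal{P}_q(y)}(\Sym^r,u)\psi_x(u)\frac{du}{\sqrt{2\pi}}
= \widetilde{\mathcal{M}}_{\sigma,\mathcal{P}_q(y)}(\Sym^r,x),
\]
with convergence uniform on $|x|\leq R$. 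Proposition~\ref{tildeM} then supplies $\widetilde{\mathcal{M}}_{\sigma,\mathcal{P}_q(y)}\to\widetilde{\mathcal{M}}_\sigma$ uniformly in $x\in\mathbb{R}$, and Proposition~\ref{M}(4) identifies the latter as $\int_{\mathbb{R}}\mathcal{M}_\sigma(\Sym^r,u)\psi_x(u)\,du/\sqrt{2\pi}$. A standard $\varepsilon/3$ interleaving of the two limits ($q^m\to\infty$ for fixed $y$, then $y\to\infty$ in the sense of \eqref{cases-I-II}) will then close the argument.

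The main obstacle is the first $\approx$ above: showing that, uniformly in $|x|\leq R$,
\[
\lim_{y\to\infty}\limsup_{q^m\to\infty}
\bigl|\Avg\psi_x(\log L_{\mathbb{P}_q}(\Sym_f^r,\sigma))
-\Avg\psi_x(\log L_{\mathcal{P}_q(y)}(\Sym_f^r,\sigma))\bigr| = 0.
\]
Using $|\psi_x(a)-\psi_x(b)|\leq |x|\,|a-b|$, this reduces to the $f$-averaged tail bound
\[
\sideset{}{'}\sum_{f\in\mathscr{P}_k(q^m)}
\bigl|\log L_{\mathbb{P}_q}(\Sym_f^r,\sigma)-\log L_{\mathcal{P}_q(y)}(\Sym_f^r,\sigma)\bigr|
\longrightarrow 0
\]
as $y\to\infty$, uniformly in $q^m$. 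For $\sigma>1$ this is unconditional via absolute convergence of the Euler product. The serious case is $1/2<\sigma\leq 1$: here Assumption~(GRH) enters essentially, allowing one to approximate $\log L(\Sym_f^r,\sigma)$ by a suitable short Dirichlet polynomial (via a smoothed Perron/Mellin integral and a contour shift past $\Re s=1$ into the GRH zero-free strip $\Re s>1/2$), and then to estimate the residual tail over $p>y$ on average through Petersson's formula~\eqref{P}. This step parallels \cite[Lemma~4.3]{mu}, and the weight restriction $2\leq k<12$ or $k=14$ is needed precisely in order to invoke~\eqref{P}.
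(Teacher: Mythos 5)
Your sandwich architecture—truncate, apply Lemma~\ref{key}, pass to the limit via Proposition~\ref{tildeM}—is the right general shape, but it diverges from the paper's actual proof at the most delicate step, and the gap you relegate to ``parallels \cite[Lemma~4.3]{mu}'' is not filled by that reference.

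The difficulty is in how $y$ is coupled to $q^m$. You propose to keep $y$ fixed, let $q^m\to\infty$ to invoke Lemma~\ref{key} (which requires $\mathcal{P}_q$ a \emph{fixed} finite set), and then let $y\to\infty$, so that you need
\[
\lim_{y\to\infty}\limsup_{q^m\to\infty}
\sideset{}{'}\sum_{f\in\mathscr{P}_k(q^m)}
\bigl|\log L_{\mathbb{P}_q}(\Sym_f^r,\sigma)-\log L_{\mathcal{P}_q(y)}(\Sym_f^r,\sigma)\bigr| = 0 .
\]
The paper never proves this estimate, and neither does \cite[Lemma~4.3]{mu}: both papers choose $y=\log q^m$ \emph{from the outset}, so that the truncation length grows with the level. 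With that choice, Lemma~\ref{appSymL} (the Duke-type approximation under GRH) and the bound $\sideset{}{'}\sum_f|\mathcal{S}_r|\ll(\log q^m)^{-\delta/2}$ from \cite[(6.4)]{mu} control the tail beyond $\log q^m$, but they say nothing about a tail beyond a \emph{fixed} $y$ as $q^m\to\infty$. To repair your first ``$\approx$'' along your proposed route you would need a separate averaged bound on $\sum_{y<p\le\log q^m}\mathscr{G}_{\sigma,p}(\alpha_f^r(p))$, say a second-moment estimate via Petersson together with a $y^{1-2\sigma}$ diagonal and an $E(q^m)(\log q^m)^{O_r(1)}$ off-diagonal; this is plausible but is additional work that the proposal does not supply and the cited sources do not contain.

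More importantly, because the paper takes $y=\log q^m$, the set $\mathcal{P}_q(\log q^m)$ is \emph{not fixed}, so Lemma~\ref{key} cannot be applied to the middle term at all. The paper therefore re-derives the identity
\[
\sideset{}{'}\sum_{f}\psi_x\bigl(\log L_{\mathcal{P}_q(\log q^m)}(\Sym_f^r,\sigma)\bigr)
\;\longrightarrow\;
\widetilde{\mathcal{M}}_{\sigma,\mathcal{P}_q(\log q^m)}(\Sym^r,x)
\]
from scratch for the growing truncation set. That is precisely the $\mathcal{Y}_{\log q^m}$-term analysis occupying most of Section~\ref{sec5}: the Taylor expansion of $\psi_x\circ g_{\sigma,p}$ into the coefficients $G_a(p,x)$, the reduction of products of $\alpha_f^{rj}(p)$ to Hecke eigenvalues $\lambda_f(n)$, the application of \eqref{P}, and the bookkeeping of the error term $E(q^m)$ against $(\log q^m)$-many primes and unbounded $n$. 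None of this is a corollary of Lemma~\ref{key}. Your proposal elides this entire calculation, which is the technical core of the lemma's proof. The $\sigma>1$ case, the use of GRH, and the final appeal to Propositions~\ref{tildeM} and~\ref{M}(4) are as you describe; the gap is concentrated in the case $1/2<\sigma\le1$, in the form of the unproved fixed-$y$ tail estimate on one hand and, on the other, the missing direct treatment of the average over the $q^m$-dependent truncation set.
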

%
%
We will prove this lemma in Section \ref{sec5}.
We note that this lemma is actually the special case $\Psi=\psi_x$ of our main
Theorem~\ref{main1}.
Showing this lemma is the core of the proof of Theorem~\ref{main1}.
\begin{proof}[Proof of Theorem~\ref{main1} and Remark~\ref{abs-conv-case}]
From Lemma~\ref{keylemma}, we can obtain Theorem~\ref{main1} by the same argument as in \cite[Section~7]{mu} (or in \cite[Section~9]{im_2011}), with corrections presented in \cite{matsumoto}.
We omit the details here, and just outline the argument very briefly.

First, let
\[
\Lambda=\{\Psi\in L^1\cap L^{\infty}\mid \Psi^{\vee}\in L^1\cap L^{\infty},
(\Psi^{\vee})^{\wedge}=\Psi\},
\]
where $\Psi^{\vee}$ means the Fourier transform of $\Psi$ and $\Psi^{\wedge}$
means the Fourier inverse transform of $\Psi$.
Using properties of $\widetilde{\mathcal{M}}_{\sigma}$
(Proposition \ref{tildeM}, \eqref{tildeTrivial}, Proposition \ref{M}, and
Lemma \ref{keylemma}), we can show the assertion of Theorem \ref{main1}
for $\Psi\in\Lambda$.    For bounded continuous $\Psi$, we approximate it
by the elements of $\Lambda$ to obtain the result.
As for the case of compactly supported Riemann integrable $\Psi$, see
\cite{matsumoto}.
\par
The assertion in Remark~\ref{abs-conv-case} can be deduced easily from Remark~\ref{M-p-supp} as follows. 
First note that, by \eqref{g_sigma}, the support of $\log L_{\mathcal{P}_q}(\Sym_f^r,\sigma)$ is included in $C_{\sigma}(\mathcal{P}_q)$.
If $\sigma>1$, $C_{\sigma}(\mathcal{P}_q)$ remains bounded when $|\mathcal{P}_q|\to\infty$.
Let $B_{\sigma}$ be the closure of the union of all $C_{\sigma}(\mathcal{P}_q)$, where $\mathcal{P}_q$ runs over all possible finite subset of $\mathbb{P}_q$.
This is surely a compact set.
By Remark~\ref{abs-conv-case} we see that $\mathrm{Supp}(\mathcal{M}_{\sigma,\mathcal{P}_q})\subset B_{\sigma}$ for any $\mathcal{P}_q$, hence $\mathrm{Supp}(\mathcal{M}_{\sigma})\subset B_{\sigma}$.
Therefore $\mathrm{Supp}(\mathcal{M}_{\sigma})$ is compact.
\par
Let $\Psi:\mathbb{R}\to\mathbb{C}$ be a continuous function.
Define a bounded continuous $\Psi_0$ satisfying $\Psi_0(x)=\Psi(x)$ for $x\in B_{\sigma}$ and $\Psi_0(x)=0$ if $|x|$ is sufficiently large.
Then by Theorem~\ref{main1}, formula \eqref{main-formula} is valid for $\Psi=\Psi_0$.
However, since $\Psi_0=\Psi$ on $B_{\sigma}$, we can replace $\Psi_0$ by $\Psi$ on the both sides of \eqref{main-formula}.
Therefore the condition ``bounded continuous'' in Theorem~\ref{main1} can be relaxed to ``continuous'' for $\sigma>1$.
\end{proof}

%
%
\section{Bounds for symmetric power $L$-functions}\label{sec:sym}
Let $f$ be a primitive form in $S_k(N)$ with $k \geq2$. 
For any prime $p \nmid N$, there exist complex numbers $\alpha_f(p)$ and $\beta_f(p)$ satisfying 
\begin{gather*}
\alpha_f(p)+\beta_f(p)=\lambda_f(p)
\quad\text{and}\quad
\alpha_f(p) \beta_f(p)=1. 
\end{gather*}
By the work of Deligne, we know that $|\alpha_f(p)|=|\beta_f(p)|=1$, and thus they are the complex conjugate of each other.
For $r \geq1$, we define
\begin{gather*}
L_p(\Sym_f^r, s)
= \prod_{h=0}^{r} (1-\alpha_f^{r-h}(p) \beta_f^h(p) p^{-s})^{-1}. 
\end{gather*}
Recently, Newton and Thorne \cite{NT2021} established the Langlands functoriality of $\Sym_f^r$ for all $r \geq1$, that is, there exists an automorphic representation $\pi_r$ of $\mathrm{GL}_{r+1}(\mathbb{A}_\mathbb{Q})$ whose standard $L$-function
\begin{gather*}
L(\pi_r, s)
= \prod_{p \leq \infty} L_p(\pi_r, s)
\end{gather*}
satisfies $L_p(\pi_r, s)=L_p(\Sym_f^r, s)$ for all primes $p \nmid N$. 
In this section, we will collect several preliminary results on $L(\pi_r, s)$ to prove Lemma \ref{keylemma}. 

First, it should be remarked that $\pi_r$ is not necessarily cuspidal, and $L(\pi_r, s)$ has sometimes poles at $s=0,1$. 
If $f$ is not of CM-type, then $\pi_r$ is a cuspidal representation. 
Therefore, $L(\pi_r, s)$ has no poles in that case. 
On the other hand, if $f$ has CM, then $L(\pi_r, s)$ is decomposed as the product of $L$-functions of degree 1 or 2. 
More precisely, we have the following formula. 

\begin{lem}\label{lem:CM}
Let $f$ be a primitive form in $S_k(N)$ with $k \geq2$. 
Assume that $f$ has CM by the quadratic extension $F/\mathbb{Q}$, and denote by $\chi_F$ the quadratic Dirichlet character attached to $F$. 
Then, there exists a Hecke character $\phi$ on $F$ such that 
\begin{gather}\label{eq:decomp}
L(\pi_r, s)
= \prod_{h=0}^{[r/2]} L(\phi^{r-2h}, s), 
\end{gather}
where $L(\phi^j, s)$ denotes the Hecke $L$-function of the character $\phi^j$ for $j \geq1$, and 
\begin{gather*}
L(\phi^0, s)
= 
\begin{cases}
\zeta(s)
& \text{if $r \equiv 0 \pmod4$}, 
\\
L(s, \chi_F)
& \text{if $r \equiv 2 \pmod4$}. 
\end{cases}
\end{gather*}
Furthermore, the character $\phi^j$ is nontrivial for $j \geq1$. 
Hence $L(\pi_r, s)$ has poles if and only if $f$ has CM and $r \equiv 0 \pmod4$, and such possible poles are simple and located only at $s=0,1$. 
\end{lem}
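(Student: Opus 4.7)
The approach is to invoke the classical structure theorem for CM newforms and then reduce the $L$-function identity to a representation-theoretic decomposition of induced representations. By the work of Hecke, Shimura and Ribet, any CM newform $f$ with CM by the quadratic field $F$ arises from a Hecke character $\phi$ of $F$; equivalently, the $\ell$-adic Galois representation $\rho_f$ satisfies $\rho_f \cong \mathrm{Ind}_{G_F}^{G_\mathbb{Q}}(\phi)$, and the automorphic representation $\pi_f$ is the automorphic induction $\mathrm{AI}_{F/\mathbb{Q}}(\phi)$. I would start by recalling this identification, since it converts the problem into decomposing $\Sym^r(\mathrm{Ind}_{G_F}^{G_\mathbb{Q}}\phi)$ as a $G_\mathbb{Q}$-representation.

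The core computation is carried out via Mackey/Clifford theory. Restricting to $G_F$ splits the induction as $\phi \oplus \phi^\sigma$, where $\sigma$ denotes the nontrivial element of $\mathrm{Gal}(F/\mathbb{Q})$, so
\[
(\Sym^r \rho_f)\big|_{G_F} = \bigoplus_{j=0}^{r} \phi^{r-j} (\phi^\sigma)^j.
\]
Conjugation by $\sigma$ permutes these characters by the involution $j \mapsto r-j$. Each $\sigma$-orbit of size $2$ (parametrized by $h = 0, 1, \ldots, \lfloor (r-1)/2 \rfloor$) assembles into an irreducible $2$-dimensional $G_\mathbb{Q}$-representation, namely $\mathrm{Ind}_{G_F}^{G_\mathbb{Q}}(\phi^{r-2h})$, whose $L$-function is the Hecke $L$-function $L(\phi^{r-2h}, s)$. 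If $r$ is odd there are no fixed points and this already yields \eqref{eq:decomp}. If $r = 2m$ is even, the single fixed index $j = m$ contributes a $\sigma$-invariant $G_F$-character $(\phi \phi^\sigma)^m$, whose corresponding $G_\mathbb{Q}$-subrepresentation is a $1$-dimensional character $\xi_r$ accounting for the factor $L(\phi^0, s)$.

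The main obstacle is to pin down $\xi_r$ precisely: the two extensions of $(\phi\phi^\sigma)^m$ to $G_\mathbb{Q}$ differ by $\chi_F$, and isolating the correct one is what produces the parity dichotomy. I would proceed by induction on $m$. The base case $r = 2$ is the classical identity $\Sym^2 \rho_f = \mathrm{Ind}(\phi^2) \oplus \chi_F$ for CM forms, which can be verified by matching Euler factors at split and inert primes, or by combining $V \otimes V = \Sym^2 V \oplus \wedge^2 V$ with the formula $\det(\mathrm{Ind}_{G_F}^{G_\mathbb{Q}}\phi) = \chi_F \cdot \mathrm{Ver}(\phi)$ and the decomposition $\mathrm{Ind}(\phi\phi^\sigma) = \mathrm{Ver}(\phi) \oplus \mathrm{Ver}(\phi)\chi_F$. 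For the inductive step, the middle vector of $\Sym^{2m+2}\rho_f$ is obtained by multiplying the middle vector of $\Sym^{2m}\rho_f$ by that of $\Sym^2 \rho_f$, hence $\xi_{2m+2} = \xi_{2m}\cdot \xi_2 = \xi_{2m}\cdot \chi_F$. Iterating gives $\xi_{2m} = \chi_F^{m}$, which is trivial when $m$ is even (i.e.\ $r \equiv 0 \pmod 4$), producing $L(\phi^0, s) = \zeta(s)$, and equals $\chi_F$ when $m$ is odd (i.e.\ $r \equiv 2 \pmod 4$), producing $L(\phi^0, s) = L(s, \chi_F)$.

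The remaining claims follow quickly. Since $\phi$ has nontrivial algebraic infinity type coming from the weight $k \geq 2$ of $f$, every power $\phi^j$ with $j \geq 1$ is a nontrivial Hecke character of $F$, and so $L(\phi^j, s)$ is entire. Hence the only factor that can contribute poles to \eqref{eq:decomp} is $L(\phi^0, s)$, which appears only when $r$ is even and equals the completed $\zeta(s)$ precisely when $r \equiv 0 \pmod 4$. Because the completed Riemann zeta function has simple poles only at $s = 0$ and $s = 1$, and $L(s, \chi_F)$ is entire, we obtain exactly the claim: $L(\pi_r, s)$ has poles if and only if $f$ has CM and $r \equiv 0 \pmod 4$, in which case the poles are simple and located at $s = 0, 1$.
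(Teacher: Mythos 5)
Your proof is correct, but it follows a genuinely different route from the paper's. The paper works entirely on the automorphic/analytic side: it fixes a split or inert prime $p\nmid N$, writes out the Euler factor $L_p(\pi_r,s)=\prod_{h=0}^r(1-\alpha_f^{r-h}\beta_f^h p^{-s})^{-1}$ explicitly in terms of $\phi_{v_1}(\varpi_{v_1}),\phi_{v_2}(\varpi_{v_2})$ (split case) or $\pm i$ (inert case), verifies by brute force that these local factors agree with those of the isobaric sum $\boxplus_{h}\mathrm{AI}(\phi^{r-2h})$, and then invokes the strong multiplicity one theorem to upgrade the local identity to an identity of automorphic representations. You instead pass to the $\ell$-adic Galois representation $\rho_f\cong\mathrm{Ind}_{G_F}^{G_\mathbb{Q}}\phi$, decompose $\Sym^r\rho_f$ via Mackey restriction to $G_F$ and Clifford theory, and isolate the one-dimensional piece $\xi_{2m}$ through an induction on $m$ using $\Sym^2\rho_f=\mathrm{Ind}(\phi^2)\oplus\chi_F$ and the surjection $\Sym^{2m}V\otimes\Sym^2 V\to\Sym^{2m+2}V$ restricted to middle weight vectors. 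Your argument is more conceptual and explains \emph{why} the parity in $r\bmod 4$ appears ($\xi_{2m}=\chi_F^m$), whereas the paper's computation simply records the outcome; on the other hand, the paper's Euler-factor check is self-contained at the unitary-normalization level, while your identification of the $\sigma$-orbit pieces with $\mathrm{Ind}(\phi^{r-2h})$ tacitly uses $\phi\phi^\sigma=1$ (equivalently $\phi^\sigma=\phi^{-1}$), which holds in the analytic normalization $\alpha_f(p)\beta_f(p)=1$ used throughout the paper but should be stated explicitly to justify rewriting $\phi^{r-h}(\phi^\sigma)^h$ as $\phi^{r-2h}$. Both proofs handle the nontriviality of $\phi^j$ the same way, via the infinity type forced by $k\geq 2$, and both locate the poles at $s=0,1$ via the completed $\zeta(s)$.
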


\begin{proof}
This result seems to be well-known to experts of the theory of automorphic forms. 
For example, formula \eqref{eq:decomp} can be seen in \cite[Lemma 4.2]{RS2008} but the authors did not give the proof. 
Here, we sketch a proof of \eqref{eq:decomp} due to lack of a good reference. 
The proof is based on calculations of local factors of the $L$-functions. 
The idea is essentially the same as that given in the arXiv version of \cite{RS2008_2}, which was described in terms of representation theory. 

For a place $v$ of $F$, let $\varpi_v$ be a prime element of the local field $F_v$, and denote by $\phi_v$ the $v$-component of $\phi$. 
By the assumption, $L(\pi_1, s)$ is equal to the $L$-function $L(\phi,s)$ with some Hecke character $\phi$ on $F$. 
Hence, for any prime number $p \nmid N$, we obtain
\begin{align*}
L_p(\pi_1, s)
&=
\begin{cases}
(1-\phi_{v_1}(\varpi_{v_1}) p^{-s})^{-1} (1-\phi_{v_2}(\varpi_{v_2}) p^{-s})^{-1}
& \text{if $p$ splits in $F$}, 
\\
(1+p^{-2s})^{-1}
& \text{if $p$ is inert in $F$,} 
\end{cases}
\end{align*}
where $v_1$ and $v_2$ are the primes of $F$ lying above $p$ in the former case. 
Note that $\phi_{v_1}(\varpi_{v_1})$ and $\phi_{v_2}(\varpi_{v_2})$ are complex conjugates.
Then we calculate $L_p(\pi_r, s)$ for any $r \geq1$ as follows. 
If $p$ splits in $F$, we have 
\begin{align*}
&L_p(\pi_r, s)
= \prod_{h=0}^{r} (1-\phi_{v_1}^{r-h}(\varpi_{v_1}) \phi_{v_2}^{h}(\varpi_{v_2}) p^{-s})^{-1} \\
&= 
\begin{cases}
\displaystyle{
(1-p^{-s})^{-1}
\prod_{h=0}^{[r/2]-1} (1-\phi_{v_1}^{r-2h}(\varpi_{v_1}) p^{-s})^{-1} (1-\phi_{v_2}^{r-2h}(\varpi_{v_2}) p^{-s})^{-1}
}
\\ \hspace{80.5mm} \text{if $r$ is even}, 
\\
\displaystyle{
\prod_{h=0}^{[r/2]} (1-\phi_{v_1}^{r-2h}(\varpi_{v_1}) p^{-s})^{-1} (1-\phi_{v_2}^{r-2h}(\varpi_{v_2}) p^{-s})^{-1}
}
\quad \text{if $r$ is odd}
\end{cases}
\end{align*}
for any $r \geq1$ by straight computations. 
If $p$ is inert, we also obtain
\begin{align*}
&L_p(\pi_r, s)
= \prod_{h=0}^{r} (1-i^{r-h} (-i)^h p^{-s})^{-1} \\
&= 
\begin{cases}
\displaystyle{
(1-p^{-s})^{-1}
\prod_{h=0}^{[r/2]-1} (1-(-1)^{r-2h} p^{-2s})^{-1} 
}
&\text{if $r \equiv 0 \pmod{4}$}, 
\\
\displaystyle{
(1+p^{-s})^{-1}
\prod_{h=0}^{[r/2]-1} (1-(-1)^{r-2h} p^{-2s})^{-1} 
}
&\text{if $r \equiv 2 \pmod{4}$}, 
\\
\displaystyle{
\prod_{h=0}^{[r/2]} (1-(-1)^{r-2h} p^{-2s})^{-1} 
}
&\text{if $r$ is odd}. 
\end{cases}
\end{align*}
Denote by $\pi'_r$ the isobaric sum $\mathrm{AI}(\phi^r) \boxplus \mathrm{AI}(\phi^{r-2}) \boxplus \cdots \boxplus \mathrm{AI}(\phi^{r-2[r/2]})$, where $\mathrm{AI}(\phi^j)$ is the automorphic representation over $\mathbb{Q}$ induced from $\phi^j$. 
Then the above calculations yield $L_p(\pi_r,s)=L_p(\pi'_r,s)$ for any prime $p \nmid N$. 
(Recall that $\chi_F(p)=1$ if $p$ splits, and $\chi_F(p)=-1$ if $p$ is inert.) 
By the strong multiplicity one theorem, we conclude that \eqref{eq:decomp} holds. 

To show that $\phi^j$ is nontrivial for $j \geq1$, we recall that $L(\phi,s)$ is equal to the $L$-function of $f \in S_k(N)$. 
In particular, the infinite component $L(\phi_\infty,s)$ is equal to $2(2\pi)^{-s-(k-1)/2} \Gamma(s+(k-1)/2)$. 
It implies that $\phi_\infty$ is given by $\phi_\infty(z)=(z/|z|)^{k-1}$ for $z \in \mathbb{C}^{\times}$. 
Therefore, the character $(\phi_\infty)^j$ is nontrivial as $k \geq2$. 
From the above, the proof of the lemma is completed.  
\end{proof}

Since $\pi_r$ is an automorphic representation of $\mathrm{GL}_{r+1}(\mathbb{A}_\mathbb{Q})$ which is self contragredient, the standard $L$-function $L(\pi_r, s)$ satisfies the functional equation
\begin{gather}\label{eq:FE}
L(\pi_r, s)
= W(\pi_r) N(\pi_r)^{1/2-s} L(\pi_r, 1-s), 
\end{gather}
where $W(\pi_r)$ and $N(\pi_r)$ are the root number and conductor of $\pi_r$, respectively. 
Note that $W(\pi_r)$ satisfies $|W(\pi_r)|=1$. 
If $f \in S_k(N)$ with $N$ square-free, then we can confirm that $N(\pi_r)$ is equal to $N^r$. 
In the general case, such a formula may not hold, but we have the following upper bound which is sufficient for our purpose. 

\begin{lem}\label{lem:cond}
Let $f$ be a primitive form in $S_k(N)$. 
For any $r \geq1$, we denote by $\pi_r$ the automorphic representation of $\mathrm{GL}_{r+1}(\mathbb{A}_\mathbb{Q})$ attached to $\Sym_f^r$. 
Then 
\begin{gather}\label{cond}
N(\pi_r) 
\ll N^{c(r)}, 
\end{gather}
where $c(r)$ is a positive constant depending only on $r$. 
\end{lem}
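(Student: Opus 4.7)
The strategy is to factor the conductor as a product of local conductor exponents and to bound each factor separately. I would first write
\[
N(\pi_r) = \prod_{p} p^{a_p(\pi_r)}, \qquad N = \prod_{p\mid N} p^{a_p(\pi_1)},
\]
where $a_p(\pi_r)$ denotes the local Artin conductor exponent of $\pi_{r,p}$. Since $\pi_{1,p}$ is unramified exactly at primes $p\nmid N$, and $\pi_{r,p}$ (as the Langlands transfer associated to $\Sym^r$) is unramified whenever $\pi_{1,p}$ is, the product on the left has support in $\{p:p\mid N\}$. It therefore suffices to establish a local inequality of the form $a_p(\pi_r) \leq c(r)\, a_p(\pi_1)$ at each $p\mid N$, with $c(r)$ depending only on $r$; combining these gives $N(\pi_r)\leq N^{c(r)}$.

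For the local bound, I would invoke the local Langlands correspondence to replace $\pi_{1,p}$ by its attached two-dimensional Weil--Deligne representation $(\rho_p, N_p)$; the parameter of $\pi_{r,p}$ is then the symmetric power $(\Sym^r \rho_p, \Sym^r N_p)$. Using the standard decomposition of the Artin conductor,
\[
a(V,N) = \dim V - \dim (V^{I_p})^{N=0} + \mathrm{Sw}(V|_{I_p}),
\]
the ``tame'' contribution is trivially at most $\dim \Sym^r V = r+1$. For the Swan conductor, the classical inequality
\[
\mathrm{Sw}(V\otimes W) \leq \mathrm{Sw}(V)\dim W + \mathrm{Sw}(W)\dim V,
\]
iterated $r-1$ times, gives $\mathrm{Sw}(V^{\otimes r}) \leq r\cdot 2^{r-1}\, \mathrm{Sw}(V)$; since in characteristic zero $\Sym^r V$ appears as a direct summand of $V^{\otimes r}$ in the Schur--Weyl decomposition, the same bound transfers to $\mathrm{Sw}(\Sym^r V)$. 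Combining with $\mathrm{Sw}(V)\leq a_p(\pi_1)$ and $a_p(\pi_1)\geq 1$ for ramified primes, one obtains $a_p(\pi_r) \leq (r+1) + r\cdot 2^{r-1} a_p(\pi_1) \leq c(r)\, a_p(\pi_1)$.

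The main obstacle will be the careful handling of the Deligne monodromy operator $N_p$ in the case where $\pi_{1,p}$ is a twist of Steinberg: there $N_p$ is nonzero and the interaction of $\Sym^r$ with the $N$-filtration must be tracked, although the resulting term remains bounded by $\dim \Sym^r V = r+1$. An alternative and cleaner route, if one prefers to avoid this analysis, is to invoke a general theorem on conductor bounds under functorial lifts (e.g.\ Bushnell--Henniart's bounds on conductors of tensor products for $\mathrm{GL}_n$, or direct bounds available in the local Langlands literature for $\Sym^r$), which yields a polynomial bound in $a_p(\pi_1)$ with exponent depending only on $r$. Either approach produces the desired constant $c(r)$, completing the proof.
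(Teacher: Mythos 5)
Your proof is correct, but it is a genuinely different route from the one in the paper. The paper's proof is very brief: in the non-CM case it simply cites Rouse's conductor bound for $\Sym^r$ of a cuspidal $\mathrm{GL}_2$ representation (\cite[Lemma~2.1]{rouse07}), and in the CM case it appeals to the isobaric decomposition \eqref{eq:decomp} established in Lemma~\ref{lem:CM} and bounds the conductors of the Hecke characters $\phi^j$. Your argument, by contrast, works uniformly at the level of local Weil--Deligne parameters: you split $a_p(\pi_r)$ into a tame part (trivially $\leq r+1$) and the Swan conductor, control the latter by iterating $\mathrm{Sw}(V\otimes W)\leq\mathrm{Sw}(V)\dim W+\mathrm{Sw}(W)\dim V$ and passing to the direct summand $\Sym^r V\subset V^{\otimes r}$, and then combine these into $a_p(\pi_r)\leq c(r)\,a_p(\pi_1)$ at each ramified prime. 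This avoids the CM/non-CM dichotomy altogether and is essentially self-contained.

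The one extra input your route uses, which is worth making explicit, is the \emph{strong} local--global compatibility of the Newton--Thorne lift at ramified primes: you need that the local parameter of $\pi_{r,p}$ is $\Sym^r$ of the parameter of $\pi_{1,p}$ for $p\mid N$, not merely at unramified places. This is indeed part of what Newton--Thorne's construction provides (via automorphy lifting and known local--global compatibility for $\mathrm{GL}_n$), and by strong multiplicity one the $\pi_r$ thus produced agrees with the one the paper defines through its unramified Euler factors. The paper sidesteps this point by outsourcing the non-CM case to Rouse, so your version is slightly more explicit about what is being used; in exchange you get a cleaner, case-free argument with an explicit constant $c(r)=(r+1)+r\cdot 2^{r-1}$.
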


\begin{proof}
If $f$ is not of CM-type, then $\pi_r$ is cuspidal for any $r \geq1$ as mentioned before. 
Therefore \eqref{cond} is due to \cite[Lemma 2.1]{rouse07}. 
In the CM case, we see that it is also satisfied by applying \eqref{eq:decomp} and evaluating the conductors of $\phi^j$. 
\end{proof}

Since the automorphic representation $\pi_1$ is obtained from a primitive form $f \in S_k(N)$ with $k \geq2$, the infinite component of $\pi_1$ is isomorphic to the discrete series representation of $\mathrm{GL}_2(\mathbb{R})$ of weight $k$. 
Then the infinite component of $\pi_r$ for $r \geq1$ is determined by Cogdell and Michel \cite[Corollary 3.2]{CM04}. 
Furthermore, the $L$-function $L_\infty(\pi_r, s)$ is given by
\begin{gather*}
L_\infty(\pi_{2m+1}, s)
= \prod_{a=0}^{m} \Gamma_\mathbb{C}\left(s+\frac{(2a+1)(k-1)}{2}\right), \\
L_\infty(\pi_{2m}, s)
= 
\begin{cases}
\displaystyle{
\Gamma_\mathbb{R}(s) 
\prod_{a=1}^{m} \Gamma_\mathbb{C}(s+a(k-1)) 
}
& \text{if $m(k-1)$ is even}, 
\\
\displaystyle{
\Gamma_\mathbb{R}(s+1) 
\prod_{a=1}^{m} \Gamma_\mathbb{C}(s+a(k-1)) 
}
& \text{if $m(k-1)$ is odd}, 
\end{cases}
\end{gather*}
where $\Gamma_\mathbb{R}(s)=\pi^{-s/2}\Gamma(s/2)$ and $\Gamma_\mathbb{C}(s)=2(2\pi)^{-s}\Gamma(s)$ (see \cite[Section 3.1.3]{CM04}). 
Then, we obtain an upper bound on the size of 
\begin{gather*}
L_{\mathrm{fin}}(\pi_r, s)
= \prod_{p<\infty} L_p(\pi_r, s)
\end{gather*}
by the Phragm\'{e}n-Lindel\"{o}f convexity principle. 

\begin{prop}\label{prop:convbd}
Let $f$ be a primitive form in $S_k(N)$. 
For any $r \geq1$, we denote by $\pi_r$ the automorphic representation of $\mathrm{GL}_{r+1}(\mathbb{A}_\mathbb{Q})$ attached to $\Sym_f^r$. 
Suppose that $L(\pi_r, s)$ is continued to $\mathbb{C}$ as an entire function. 
Then we have 
\begin{gather*}
L_{\mathrm{fin}}(\pi_r, s)
\ll_{k,r} \left( N^{c(r)} (|t|+1)^{r+1} \right)^{1/2}
\end{gather*}
for $s=\sigma+it$ with $1/2 \leq \sigma \leq4$, where $c(r)$ is the constant of Lemma \ref{lem:cond}. 
\end{prop}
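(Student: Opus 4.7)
The plan is to apply the Phragmén–Lindelöf convexity principle to the function $L_{\mathrm{fin}}(\pi_r, s)$ on a vertical strip containing $[1/2, 4]$, using the absolute convergence of the Euler product on one side and the functional equation on the other. Since $L(\pi_r, s)$ is entire by assumption and $L_\infty(\pi_r, s)$ has no zeros (only poles from the Gamma factors, which turn into zeros of $L_{\mathrm{fin}}$), the quotient $L_{\mathrm{fin}}(\pi_r, s)$ is entire and of finite order, so Phragmén–Lindelöf applies.

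First, I would establish that $|L_{\mathrm{fin}}(\pi_r, \sigma + it)| \ll_{k,r} 1$ uniformly in $t$ for $\sigma \geq 1 + \delta$ with some fixed small $\delta > 0$. At unramified primes the Satake parameters have absolute value one, so the Euler product is absolutely convergent with a bound in terms of $\zeta(1+\delta)^{r+1}$; the finitely many ramified factors contribute bounded quantities since their local roots are bounded (a standard consequence of the local Langlands correspondence, or one can invoke Rankin–Selberg). Second, using the functional equation \eqref{eq:FE},
\begin{gather*}
L_{\mathrm{fin}}(\pi_r, s)
= W(\pi_r)\, N(\pi_r)^{1/2-s}\, \frac{L_\infty(\pi_r, 1-s)}{L_\infty(\pi_r, s)}\, L_{\mathrm{fin}}(\pi_r, 1-s),
\end{gather*}
together with Stirling's formula applied term by term to the product of gamma factors in $L_\infty(\pi_r, s)$, one obtains
\begin{gather*}
\left| \frac{L_\infty(\pi_r, 1-s)}{L_\infty(\pi_r, s)} \right|
\ll_{k,r} (|t|+1)^{(r+1)(1/2 - \sigma)}.
\end{gather*}
The exponent $r+1$ here matches the degree of $\pi_r$: for odd $r=2m+1$ it comes from $m+1$ factors $\Gamma_\mathbb{C}$, each contributing $(|t|+1)^{1-2\sigma}$; for even $r=2m$ it comes from one $\Gamma_\mathbb{R}$ (contributing $(|t|+1)^{1/2 - \sigma}$) and $m$ factors $\Gamma_\mathbb{C}$. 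Evaluating at $\sigma = -\delta$ and invoking the bound at $1-s$ (where $1-\sigma = 1 + \delta$) from the first step gives
\begin{gather*}
|L_{\mathrm{fin}}(\pi_r, -\delta + it)|
\ll_{k,r} N(\pi_r)^{1/2+\delta}\,(|t|+1)^{(r+1)(1/2+\delta)}.
\end{gather*}

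Third, I would apply Phragmén–Lindelöf on the strip $-\delta \leq \sigma \leq 1+\delta$, interpolating between the bounds $O(1)$ at $\sigma = 1+\delta$ and the above at $\sigma = -\delta$. Linear interpolation of the exponents at $\sigma = 1/2$ (which is the midpoint of the strip for every $\delta$) gives a bound of the form $(N(\pi_r)(|t|+1)^{r+1})^{(1/2+\delta)/2}$, which as $\delta \downarrow 0$ is in particular majorized by $(N(\pi_r)(|t|+1)^{r+1})^{1/2}$. Finally, substituting the conductor estimate $N(\pi_r) \ll N^{c(r)}$ from Lemma~\ref{lem:cond} yields the asserted bound for $1/2 \leq \sigma \leq 1 + \delta$; the remaining range $1+\delta \leq \sigma \leq 4$ is handled directly by Step 1.

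The main technical obstacle is the careful Stirling computation for the ratio of archimedean factors: the formulas for $L_\infty(\pi_r, s)$ split into three cases according to the parity of $r$ and of $m(k-1)$, and all of the shifts $(2a+1)(k-1)/2$ and $a(k-1)$ must be tracked to confirm that the exponent of $(|t|+1)$ in the ratio comes out to exactly $(r+1)(1/2 - \sigma)$, with the implied constant depending only on $k$ and $r$. Once this is pinned down, Phragmén–Lindelöf is routine, since the polynomial growth of $L_{\mathrm{fin}}(\pi_r, s)$ on vertical lines (hence its finite order in the strip) follows from the same functional equation argument applied to any fixed $\sigma_0 \in [-\delta, 1+\delta]$.
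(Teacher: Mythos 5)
Your overall architecture (Phragm\'{e}n--Lindel\"{o}f, functional equation, Stirling for the ratio of archimedean factors, followed by insertion of Lemma~\ref{lem:cond}) coincides with the paper's, and your analysis of the exponent $(r+1)(1/2-\sigma)$ for the gamma-factor ratio, including the case split by parity of $r$ and $m(k-1)$, matches what the paper does. The difference, and the place where your sketch has a gap, is the choice of interpolation strip and the corresponding input at the ``absolutely convergent'' edge.

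You propose to interpolate on $-\delta\le\sigma\le 1+\delta$, which requires showing $|L_{\mathrm{fin}}(\pi_r,\sigma+it)|\ll_{k,r}1$ on $\sigma=1+\delta$. At unramified primes this is fine by Deligne. But at ramified primes you assert ``their local roots are bounded,'' citing local Langlands or Rankin--Selberg. This is not justified: the available uniform bound at ramified places (Luo--Rudnick--Sarnak, used in the paper via \cite{RS}) is $|\gamma_j(p)|\le p^{1/2-\delta(r)}$, not $O(1)$; the Ramanujan bound $|\gamma_j(p)|\le 1$ at \emph{ramified} primes for $\pi_r$ is not something you can take for granted. With only $|\gamma_j(p)|\le p^{1/2-\delta(r)}$, the ramified Euler factor at $\sigma=1+\delta$ is $\prod_{p\mid N}(1-p^{-(1/2+\delta(r)+\delta)})^{-(r+1)}$, which is not bounded uniformly in $N$ when $N$ has many prime divisors (it is $N^{o(1)}$, which would have to be absorbed into $c(r)$, but you do not address this). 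The paper circumvents the issue entirely by using the bound $|\gamma_j(p)|\le p^{1/2-\delta(r)}$ at \emph{all} primes and taking the absolutely convergent edge at $\sigma=3/2$, where $\prod_p\prod_j(1-p^{1/2-\delta(r)-\sigma})^{-1}=\zeta(\sigma-1/2+\delta(r))^{r+1}\ll_r 1$ without invoking Deligne or distinguishing ramified from unramified primes. Interpolating on $[-1/2,3/2]$ then gives exactly the exponent $1/2$ that the proposition claims (your narrower strip would give conductor exponent $\approx 1/4$, but, as you note, that is not needed, and in any case your strip is not available without the missing Ramanujan input).

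A second, smaller omission: you do not distinguish the CM case. When $f$ has CM, $\pi_r$ is not cuspidal and the Rudnick--Sarnak bound does not directly apply; the paper handles this separately by using the decomposition of Lemma~\ref{lem:CM} into Hecke $L$-functions, from which $|\gamma_j(p)|\le 1$ does follow at all primes. You should add this case split.
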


\begin{proof}
Suppose that $f$ is not of CM-type. 
In this case $\pi_r$ is cuspidal, and the local factors of $L_{\mathrm{fin}}(\pi_r, s)$ are represented as 
\begin{gather}\label{eq:local}
L_p(\pi_r, s)
= \prod_{j=1}^{r+1} (1-\gamma_j(p) p^{-s})^{-1}
\end{gather}
for all prime number $p$, where $\gamma_j(p)$ are complex numbers satisfying
\begin{gather*}
|\gamma_j(p)|
\leq p^{1/2-\delta(r)}, 
\qquad
\delta(r)
= \frac{1}{(r+1)^2+1}. 
\end{gather*}
See \cite[Appendix]{RS}. 
Using these bounds, we derive
\begin{gather*}
|L_{\mathrm{fin}}(\pi_r, s)| 
\leq \prod_{p<\infty} \prod_{j=1}^{r+1} (1-p^{1/2-\delta(r)-\sigma})^{-1}
= \zeta(\sigma-1/2+\delta(r))^{r+1}
\end{gather*}
for $\sigma >3/2-\delta(r)$. 
In particular, we have $|L_{\mathrm{fin}}(\pi_r, s)| \ll_r 1$ for $3/2 \leq \sigma \leq 4$. 
On the other hand, let $s=\sigma+it$ with $\sigma \leq-1/2$. 
Then \eqref{eq:FE} gives
\begin{gather*}
|L_{\mathrm{fin}}(\pi_r, s)|
\leq N(\pi_r)^{1/2-\sigma} \left|\frac{L_\infty(\pi_r, 1-s)}{L_\infty(\pi_r, s)}\right| |L_{\mathrm{fin}}(\pi_r, 1-s)|. 
\end{gather*}
A simple calculation using Stirling's formula yields 
\begin{gather*}
\left|\frac{L_\infty(\pi_r, 1-s)}{L_\infty(\pi_r, s)}\right|
\ll_{k,r} (|t|+1)^{(r+1)(1/2-\sigma)}. 
\end{gather*}
Hence we find that $|L_{\mathrm{fin}}(\pi_r, s)| \ll_{k,r} \{N^{c(r)} (|t|+1)^{r+1}\}^{1/2-\sigma}$ for $\sigma \leq -1/2$ by applying Lemma \ref{lem:cond}. 
Finally, by the Phragm\'{e}n-Lindel\"{o}f convexity principle, we obtain the desired bound of $|L_{\mathrm{fin}}(\pi_r, s)|$ even for $1/2 \leq \sigma \leq 3/2$. 

In the case where $f$ is of CM-type, we deduce from the decomposition \eqref{eq:decomp} that the local factor $L_p(\pi_r, s)$ is represented as \eqref{eq:local} with $|\gamma_j(p)| \leq1$. 
Hence, by the same argument as in the case of non-CM type, we obtain the suitable upper bound of $|L_{\mathrm{fin}}(\pi_r, s)|$ for $1/2 \leq \sigma \leq4$, provided that $L(\pi_r, s)$ is entire. 
\end{proof}

\begin{cor}\label{cor:dlog}
Let $f$ be a primitive form in $S_k(N)$. 
For any $r \geq1$, we denote by $\pi_r$ the automorphic representation of $\mathrm{GL}_{r+1}(\mathbb{A}_\mathbb{Q})$ attached to $\Sym_f^r$. 
Suppose that $L(\pi_r, s)$ is continued to $\mathbb{C}$ as an entire function. 
If $L(\pi_r, s)$ has no zeros for $\mathrm{Re}(s)>1/2$, then we have 
\begin{gather*}
\frac{L'_{\mathrm{fin}}}{L_{\mathrm{fin}}}(\pi_r, s)
\ll_{k,r} \log \left( N(|t|+1) \right)
\end{gather*}
for $s=\sigma+it$ with $1/2 \leq \sigma \leq4$. 
\end{cor}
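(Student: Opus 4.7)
The plan is to invoke the Borel-Caratheodory theorem to convert the upper bound on $|L_{\mathrm{fin}}(\pi_r, s)|$ furnished by Proposition~\ref{prop:convbd} into a bound on the logarithmic derivative. Under the entireness assumption and GRH, the function $L_{\mathrm{fin}}(\pi_r, s) = L(\pi_r, s)/L_\infty(\pi_r, s)$ is holomorphic and non-vanishing in the half-plane $\mathrm{Re}(s) > 1/2$ (the archimedean factor $L_\infty(\pi_r, s)$ has its only poles at non-positive half-integer points, well to the left of the critical line), so a single-valued branch of $\log L_{\mathrm{fin}}(\pi_r, s)$ exists there.

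Fix $t \in \mathbb{R}$ and introduce an anchor point $w_0 = \kappa + it$ with $\kappa = 5$, say. Because the Satake parameters at unramified primes have absolute value $1$ by Deligne (as recalled at the start of Section~\ref{sec:sym}) and the remaining local factors are controlled, the Euler product for $L_{\mathrm{fin}}(\pi_r, w_0)$ converges absolutely, yielding $|\log L_{\mathrm{fin}}(\pi_r, w_0)| \ll_{k,r} 1$. Choose $R = \kappa - 1/2 - \eta$ for a small fixed $\eta>0$, so that the closed disk $\overline{D(w_0, R)}$ is contained in the half-plane $\mathrm{Re}(s) \geq 1/2+\eta$ where $\log L_{\mathrm{fin}}(\pi_r, \cdot)$ is holomorphic. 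On the boundary $|w - w_0| = R$ one has $|\mathrm{Im}(w)| \leq |t|+R$, and Proposition~\ref{prop:convbd} gives
$$
\mathrm{Re}\bigl(\log L_{\mathrm{fin}}(\pi_r, w)\bigr) = \log|L_{\mathrm{fin}}(\pi_r, w)| \leq C_{k,r}\,\log\bigl(N(|t|+1)\bigr).
$$

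Apply Borel-Caratheodory to $g(w) = \log L_{\mathrm{fin}}(\pi_r, w) - \log L_{\mathrm{fin}}(\pi_r, w_0)$ on $\overline{D(w_0, R)}$, and restrict to the concentric disk of radius $R-\eta$; this produces $|g(w)| \ll_{k,r,\eta} \log(N(|t|+1))$ throughout that smaller disk. For any $s_0 = \sigma+it$ with $1/2+2\eta \leq \sigma \leq 4$, the point $s_0$ lies well inside this disk, and a final application of Cauchy's integral formula on a circle of radius $\eta$ around $s_0$ converts the $L^\infty$ bound on $g$ into the bound $|g'(s_0)| = |L'_{\mathrm{fin}}/L_{\mathrm{fin}}(\pi_r, s_0)| \ll_{k,r,\eta} \log(N(|t|+1))$, which is exactly the claim. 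In the CM case the same argument works verbatim, since Proposition~\ref{prop:convbd} was proved uniformly in both cases via the decomposition \eqref{eq:decomp}.

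The main obstacle is intrinsic to the method: the argument degrades as $\sigma \searrow 1/2$, because GRH does not preclude zeros of $L(\pi_r,s)$ \emph{on} the critical line, and so the disk of holomorphy cannot be pushed strictly past $\mathrm{Re}(s)=1/2$. Consequently the implied constant depends on how close $\sigma$ is allowed to come to $1/2$; in the applications (notably the proof of Lemma~\ref{keylemma}) one works with a fixed $\sigma > 1/2$ and absorbs this dependence into the implicit constant, which is harmless. One could in principle refine the result with a Hadamard-product/zero-counting analysis to obtain a bound valid at $\sigma = 1/2$ up to a $(\log)^2$ loss, but this refinement is not needed here.
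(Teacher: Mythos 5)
Your Borel--Carath\'eodory plus Cauchy approach is correct and is essentially the device the paper itself uses in Lemma~\ref{lem:1} (for $\log L$; differentiating via Cauchy to bound $L'/L$ is the natural last step), so the route matches the paper's intent. The caveat you raise is real and deserves emphasis: since $L(\pi_r,s)$ can vanish on $\mathrm{Re}(s)=1/2$, the logarithmic derivative cannot be bounded uniformly down to $\sigma=1/2$ with a constant depending only on $k$ and $r$; any correct version must either keep $\sigma\geq 1/2+2\eta$ with the constant depending on $\eta$, as yours does, or carry an explicit factor of order $(\sigma-1/2)^{-1}$, as Lemma~\ref{lem:1} in fact does. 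The Corollary as written suppresses this dependence, but downstream it is only ever invoked at fixed $\sigma>1/2$ (the range $2\geq\sigma>1/2$ just before \eqref{eq:AC}, and $\sigma_1>1/2$ in Lemma~\ref{lem:2}), so nothing breaks. One small bookkeeping repair in your argument: with $R=\kappa-1/2-\eta$ and the Borel--Carath\'eodory bound on $g$ valid for $|w-w_0|\leq R-\eta$, a point $s_0$ with $\sigma=1/2+2\eta$ lies on the circle $|w-w_0|=R-\eta$, so a Cauchy circle of radius $\eta$ around it reaches $|w-w_0|=R$ where you have no bound; taking the Cauchy radius $\eta/2$, or restricting to $\sigma\geq 1/2+3\eta$, removes the overlap at no cost.
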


\begin{proof}
This corollary can be proved in the same line as \cite[Lemma	3]{d} by using Proposition \ref{prop:convbd}. 
\end{proof}

Let $f$ be a primitive form in $S_k(q^m)$, where $q$ is a prime number, and $2\leq k < 12$ or $k=14$. 
We consider the case $r=1$ or $2$. 
Now we recall that the partial $L$-function $L_{\mathbb{P}_q}(\Sym_f^r, s)$ is represented as
\begin{gather}\label{eq:sym_Pq}
L_{\mathbb{P}_q}(\Sym_f^r, s)
= \prod_{\substack{p<\infty \\ p \neq q}} L_p(\pi_r, s)
= L_{\mathrm{fin}}(\pi_r, s) \cdot L_q(\pi_r, s)^{-1}. 
\end{gather}
Thus we obtain
\begin{gather}\label{eq:ac}
\frac{L'_{\mathbb{P}_q}}{L_{\mathbb{P}_q}}(\Sym_f^r, s)
= \frac{L'_{\mathrm{fin}}}{L_{\mathrm{fin}}}(\pi_r, s)
- \frac{L'_q}{L_q}(\pi_r, s). 
\end{gather}
Since $L(\pi_r, s)$ is an entire function due to $r=1,2$, we can apply Corollary \ref{cor:dlog} under Assumption (GRH). 
Therefore the first term of the right-hand side of \eqref{eq:ac} is 
\begin{gather*}
\frac{L'_{\mathrm{fin}}}{L_{\mathrm{fin}}}(\pi_r, s)
\ll_{r} \log \left( q^m(|t|+1) \right)
\end{gather*}
for $s=\sigma+it$ with $1/2 \leq \sigma \leq 4$ if we suppose Assumption (GRH). 
Furthermore, by using \eqref{eq:local}, we calculate the second term as 
\begin{gather*}
\frac{L'_q}{L_q}(\pi_r, s)
= -(\log{q}) \sum_{m=1}^{\infty} \sum_{j=1}^{r+1} \gamma_j(q)^m q^{-ms}. 
\end{gather*}
The complex numbers $\gamma_j(q)$ satisfy $|\gamma_j(q)| \leq q^{1/2-\delta(r)}$ if $f$ is not of CM-type, and $|\gamma_j(q)| \leq 1$ otherwise. 
Therefore, we obtain 
\begin{gather}\label{eq:L_q}
\frac{L'_q}{L_q}(\pi_r, s)
\ll_r (\log{q}) \sum_{m=1}^{\infty} q^{-m(\sigma-1/2+\delta(r))}
\ll_r \log{q}
\end{gather}
for $\sigma \geq1/2$ in each case. 
As a result, the upper bound
\begin{gather}\label{eq:AC}
\frac{L'_{\mathbb{P}_q}}{L_{\mathbb{P}_q}}(\Sym_f^r, s)
\ll_{r} \log \left( q^m(|t|+1) \right)
\end{gather}
is available for $1/2 \leq \sigma \leq 4$ if we suppose Assumption (GRH). 

%
%
\section{Proof of Lemma~\ref{keylemma}}\label{sec5}
We begin with the case $\sigma>1$.
\begin{proof}[Proof of Lemma~\ref{keylemma} in the case $\sigma >1$]
Since $\sigma >1$, we can find a sufficiently large 
$y>0$ for which it holds that
\begin{align}\label{5-1-1}
|\log L_{\mathbb{P}_q}(\Sym_f^r, \sigma)
-
\log L_{\mathcal{P}_q(y)}(\Sym_f^r, \sigma)
|<\varepsilon
\end{align}
and 
\begin{align}\label{5-1-2}
|\widetilde{\mathcal{M}}_{\sigma, \mathcal{P}_q(y)}(\Sym^r, x)-\widetilde{\mathcal{M}}_{\sigma}(\Sym^r, x)|<\varepsilon
\end{align} 
for any $x\in\mathbb{R}$ and any $\varepsilon>0$.
The last inequality is provided by Proposition~\ref{tildeM}.
Using this $\mathcal{P}_q(y)$, we have
\begin{align*}
&
\bigg|\sideset{}{'}\sum_{f \in \mathscr{P}_k(q^m)}
\psi_x (\log L_{\mathbb{P}_q}(\Sym_f^r, \sigma))
-
\int_{\mathbb{R}}\mathcal{M}_{\sigma}(\Sym^r, u)
\psi_x(u) \frac{du}{\sqrt{2\pi}}
\bigg|
\\
\leq &
\bigg|\sideset{}{'}\sum_{f \in \mathscr{P}_k(q^m)}
\bigg(\psi_x (\log L_{\mathbb{P}_q}(\Sym_f^r, \sigma))
-
\psi_x (\log L_{\mathcal{P}_q(y)}(\Sym_f^r, \sigma))
\bigg)\bigg|
\\
&+
\bigg|\sideset{}{'}\sum_{f \in \mathscr{P}_k(q^m)}
\psi_x (\log L_{\mathcal{P}_q(y)}(\Sym_f^r, \sigma))
-
\int_{\mathbb{R}}\mathcal{M}_{\sigma, \mathcal{P}_q(y)}(\Sym^r, u)
\psi_x(u) \frac{du}{\sqrt{2\pi}}
\bigg|
\\
&
+
\bigg|
\int_{\mathbb{R}}\mathcal{M}_{\sigma, \mathcal{P}_q(y)}(\Sym^r, u)
\psi_x(u) \frac{du}{\sqrt{2\pi}}
-
\int_{\mathbb{R}}\mathcal{M}_{\sigma}(\Sym^r, u)
\psi_x(u) \frac{du}{\sqrt{2\pi}}
\bigg|
\\
=
&S_1+S_2+S_3,
\end{align*}
say.
We remind the relation 
\begin{equation}\label{ihara}
|\psi_x(u)-\psi_x(u')|
\ll |x|\cdot |u-u'|
\end{equation}
for $u\in \mathbb{R}$ (see Ihara~\cite[(6.5.19)]{ihara} or Ihara-Matsumoto~\cite{im_2011}).
We see that
\begin{align*}
S_1 \ll \; & |x| 
\sideset{}{'}\sum_{f \in \mathscr{P}_k(q^m)}
\bigg(
|\log L_{\mathbb{P}_q}(\Sym_f^r, \sigma)
-
\log L_{\mathcal{P}_q(y)}(\Sym_f^rf, \sigma)|
\bigg)
\end{align*}
and
\[
S_3
=
\left|
\widetilde{\mathcal{M}}_{\sigma,\mathcal{P}_q(y)}(\Sym^r, x)
-
\widetilde{\mathcal{M}}_{\sigma}(\Sym^r, x)
\right|.
\]
Therefore \eqref{5-1-1} and \eqref{5-1-2} yield that
$|S_1|$ and $|S_3|$ are $O(\varepsilon)$ for large $y$, with the implied constant depending on $R$.
As for the estimate on $|S_2|$, we use Proposition~\ref{M_P} and Lemma~\ref{key},
whose convergence is uniform on $|x|\leq R$.
This completes the proof.
\end{proof}
\par
Now we proceed to the proof in the more difficult case $1\geq\sigma>1/2$.
Let
\[
F(\Sym_f^r,s)=\frac{L_{\mathbb{P}_q}(\Sym_f^r, s)}
{L_{\mathcal{P}_q(\log q^m)}(\Sym_f^r, s)}.
\]
It is to be noted that $y=\log q^m$ satisfies \eqref{cases-I-II} for
sufficiently large $m$ in Case I, and for sufficiently large $q$ in
Case II.   Therefore we may apply the results in Section \ref{sec3}
in the following argument.

Using \eqref{eq:AC}
for $2\geq \sigma>1/2$, we have
\begin{align}\label{10-5-7-anal}
\frac{F^{\prime}}{F}(\Sym_f^r,s)
&=
\frac{L'_{\mathbb{P}_q}}{L_{\mathbb{P}_q}}
-
\frac{L'_{\mathbb{P}_q(\log q^m)}}{L_{\mathbb{P}_q(\log q^m)}}
\nonumber\\
&\ll
\log q^m+(\log(1+|t|))-
\sum_{\substack{p\neq q\\p\leq \log q^m}}\sum_{h=0}^r\frac{\log p}{p^{\sigma}-1}
\nonumber\\
&\ll_{r}
\log q^m+(\log(1+|t|))
\end{align}
analogously to \cite[(5.7)]{mu}.
Here the implied constant depens on $r$ and $k$, but we restrict $k$ to $0<k<12$ or $14$.
In \cite{mu}, we used the assumption $q>Q(\mu)$ (where $\mu$ is a positive integer
and $Q(\mu)$ is the smallest prime satisfying $2^{\mu}/\sqrt{Q(\mu)}<1$), but this type of assumption is not necessary here because of
\eqref{eq:L_q}, where we do not consider the $q$-factor in the case $r=2$ and $m\neq 1$ since $\lambda_f(q)=0$.
From \eqref{10-5-7-anal} we can deduce the following lemma, which is an analogue of \cite[Lemma~5.1 and Remark~5.2]{mu}, whose basic idea is due to Duke \cite{d} (Assumption (GRH) is necessary here).
%
%
\begin{lem}\label{appSymL}
  Suppose Assumption (GRH).
Let $f$ be a primitive form in $S_k(q^m)$, where $q$ is a prime number.
For fixed $r$ $(r= 1, 2)$ and $\sigma=1/2 + \delta$ $(0< \delta \leq 1/2)$, we have
\begin{align}\label{F}
&
\log L_{\mathbb{P}_q}(\Sym_f^r, \sigma)
- 
\log L_{\mathcal{P}_q(\log q^m)}(\Sym_f^r, \sigma)
-
\mathcal{S}_r
\nonumber\\
\ll_r&
\frac{1}{\delta (\log q^m)^{2\delta}}+(\log q^m)^{-1/2}
+(q^{m/4(k-1)r})^{-\delta/2},
\end{align}
where
\[
\mathcal{S}_r=
\sum_{ p\in \mathbb{P}_q\setminus \mathcal{P}_q(\log q^m)}
\frac{\lambda_f( p^r)}{ p^{\sigma}} e^{- p/q^{m/(k-1)r}}.
\]
\end{lem}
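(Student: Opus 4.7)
My plan is to follow the method of Duke \cite{d} as adapted to symmetric power $L$-functions in \cite{mu}: a Dirichlet series decomposition isolates the main (i.e.\ $j=1$) contribution in terms of $\lambda_f(p^r)$, and a Mellin-transform/contour-shift argument, combined with the GRH-based bounds established in Section~\ref{sec:sym}, controls the smoothing error against $\mathcal{S}_r$.

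First, from \eqref{log-L-P-def},
\[
\log L_{\mathbb{P}_q}(\Sym_f^r,\sigma)-\log L_{\mathcal{P}_q(\log q^m)}(\Sym_f^r,\sigma)
=\sum_{\substack{p>\log q^m\\ p\ne q}}\sum_{j\ge 1}\frac{1}{j p^{j\sigma}}\sum_{h=0}^{r}\bigl(\alpha_f^{r-h}(p)\beta_f^h(p)\bigr)^{j}.
\]
By \eqref{euler}, the $j=1$ contribution is exactly $\sum_{p>\log q^m,\,p\ne q}\lambda_f(p^r)p^{-\sigma}$. Since $|\alpha_f(p)|=|\beta_f(p)|=1$, the inner sum at each $j$ is $O(r)$, so the $j\ge 2$ tail is dominated by $\sum_{n>\log q^m}n^{-2\sigma}\ll(\log q^m)^{-2\delta}/\delta$, which produces the first error term in \eqref{F}.

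Next, the difference between the main $j=1$ sum and $\mathcal{S}_r$ is
\[
\sum_{\substack{p>\log q^m\\ p\ne q}}\frac{\lambda_f(p^r)}{p^\sigma}\bigl(1-e^{-p/X}\bigr),\qquad X=q^{m/(k-1)r}.
\]
Extending the range of summation to all $p\ne q$ costs only an exponentially small quantity, since for $p\le\log q^m$ one has $1-e^{-p/X}\le p/X\le(\log q^m)/X$. I would then apply Mellin inversion with the kernel $e^{-x}$ (of Mellin transform $\Gamma(w)$), writing $1-e^{-p/X}=-\tfrac{1}{2\pi i}\int_{(-c)}\Gamma(w)(X/p)^w\,dw$ for some $c\in(0,\delta)$, justified by starting from an initial contour at $\mathrm{Re}(w)>1-\sigma$ (where the inner prime sum is absolutely convergent) and shifting across the simple pole of $\Gamma$ at $w=0$. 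Interchanging sum and integral yields
\[
-\frac{1}{2\pi i}\int_{(-c)}\Gamma(w)\,X^w\,G(\sigma+w)\,dw,
\]
where $G(s)=\sum_{p\ne q}\lambda_f(p^r)p^{-s}$ coincides, up to a bounded $j\ge 2$ remainder, with $\log L_{\mathbb{P}_q}(\Sym_f^r,s)$ analytically continued via GRH. Corollary~\ref{cor:dlog}, integrated from a safe reference point, gives $\log L_{\mathbb{P}_q}(\Sym_f^r,\sigma+w)\ll_r\log(q^m(|\mathrm{Im}(w)|+1))$ uniformly on $\mathrm{Re}(\sigma+w)\ge 1/2$, and the contribution of the missing factor at $p=q$ is separately absorbed by \eqref{eq:L_q}. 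The exponential decay of $\Gamma(w)$ on vertical lines then makes the integral absolutely convergent and yields a bound of the shape $X^{-c}\log q^m$.

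The hard part is the careful bookkeeping required to produce the remaining two error terms with their precise exponents. Optimally balancing the contour-shift depth $c$ against the height $T$ at which the vertical contour is truncated gives rise to the $(\log q^m)^{-1/2}$ term, while evaluating the main contribution on the shifted line (with $c$ taken proportional to $\delta$) produces the $(q^{m/4(k-1)r})^{-\delta/2}$ term. This parallels the structure of the proof of \cite[Lemma~5.1]{mu}, with Duke's bounds for $L(f,s)$ replaced by the convexity bound (Proposition~\ref{prop:convbd}) and the logarithmic-derivative bound (Corollary~\ref{cor:dlog}) obtained in Section~\ref{sec:sym} for symmetric power $L$-functions, whose applicability in the present $r=1,2$ setting is guaranteed by the entireness of $L(\Sym_f^r,s)$.
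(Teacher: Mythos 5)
Your overall plan --- Mellin inversion with the $\Gamma$-kernel to introduce the $e^{-p/X}$ smoothing (with $X=q^{m/(k-1)r}$), a shift to $\mathrm{Re}(w)=-c$ for some $0<c<\delta$ picking up the residue at $w=0$, and the GRH bounds from Section~\ref{sec:sym} --- is exactly the Duke-type method the paper invokes by citing \cite[Lemma~5.1 and Remark~5.2]{mu}, so in outline you are on the right track. Your pre-decomposition into a $j=1$ main sum and an absolutely convergent $j\ge 2$ tail, treated separately, is a modest reorganization of that argument: the paper instead sets $F=L_{\mathbb{P}_q}/L_{\mathcal{P}_q(\log q^m)}$ and feeds the bound \eqref{10-5-7-anal} for $F'/F$ into the Mellin machinery, so the truncation at $\log q^m$ is absorbed into the Euler factors rather than peeled off at the start. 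Both routes are viable, and your references to Corollary~\ref{cor:dlog}, Proposition~\ref{prop:convbd} and \eqref{eq:L_q} are the correct analytic inputs.

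The gap is in your last paragraph, which you yourself flag as deferring ``the careful bookkeeping.'' In particular, the stated source of the $(\log q^m)^{-1/2}$ term is wrong. You attribute it to ``optimally balancing the contour-shift depth $c$ against the height $T$ at which the vertical contour is truncated,'' but with the $\Gamma$-kernel there is no such truncation: $|\Gamma(-c+iv)|\ll e^{-\pi|v|/2}$, so the shifted vertical integral converges absolutely and contributes one clean bound of shape $X^{-c}\bigl(\log q^m+(\delta-c)^{-1}\bigr)$, the $(\delta-c)^{-1}$ coming from the $j\ge 2$ correction relating $G(\sigma+w)$ to $\log L_{\mathbb{P}_q}(\Sym_f^r,\sigma+w)$ on the line $\mathrm{Re}(w)=-c$. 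Taking $c\asymp\delta$ then gives a quantity that can be absorbed into $(q^{m/4(k-1)r})^{-\delta/2}$ after a short argument covering bounded $q^m$, but the $(\log q^m)^{-1/2}$ term never arises from a truncation height of the $w$-contour, and your sketch never locates where it actually comes from. Until that term is accounted for and the absorption $X^{-c}(\log q^m+\delta^{-1})\ll(q^{m/4(k-1)r})^{-\delta/2}$ is actually carried out, the bound \eqref{F} is not established.
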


%
%
\begin{proof}[Proof of Lemma~\ref{keylemma} in the case $1\geq\sigma>1/2$]
Now let $r=1, 2$ (i.e. $\rho=0$).
Because of \eqref{prop-3-pr-4}, the desired assertion is the claim that
\begin{equation*}
\bigg|
\sideset{}{'}\sum_{f \in \mathscr{P}_k(q^m)}
\psi_x(\log L_{\mathbb{P}_q}(\Sym_f^r, \sigma))
-
\widetilde{\mathcal{M}}_{\sigma}(\Sym^r, x)
\bigg|
\end{equation*}
tends to $0$ as $q^m\to \infty$, 
when $1\geq \sigma > 1/2$.
\par
Using \eqref{ihara}, we can see the following inequality:
\begin{align}\label{basic}
&
\bigg|
\sideset{}{'}\sum_{f \in \mathscr{P}_k(q^m)}
\psi_x(\log L_{\mathbb{P}_q}(\Sym_f^r, \sigma)) 
-
\widetilde{\mathcal{M}}_{\sigma}(\Sym^r, x)
\bigg|
\nonumber\\
\leq&
\bigg|
\sideset{}{'}\sum_{f \in \mathscr{P}_k(q^m)}
\bigg(
\psi_x(\log L_{\mathbb{P}_q}(\Sym_f^r, \sigma)) 
-
\psi_x(\log L_{\mathcal{P}_q(\log q^m)}(\Sym_f^r, \sigma))
\bigg)
\bigg|
\nonumber\\
&+
\bigg|
\sideset{}{'}\sum_{f \in \mathscr{P}_k(q^m)}
\psi_x(\log L_{\mathcal{P}_q(\log q^m)}(\Sym_f^r, \sigma))
-
\widetilde{\mathcal{M}}_{\sigma, \mathcal{P}_q(\log q^m)}(\Sym^r, x)
\bigg|
\nonumber\\
&+
\left|
\widetilde{\mathcal{M}}_{\sigma, \mathcal{P}_q(\log q^m)}(\Sym^r, x)
-
\widetilde{\mathcal{M}}_{\sigma}(\Sym^r, x)
\right|
\nonumber\\
\ll &
\sideset{}{'}\sum_{f \in \mathscr{P}_k(q^m)}
|x| \bigg(\big|\log L_{\mathbb{P}_q}(\Sym_f^r, \sigma)
- \log L_{\mathcal{P}_q(\log q^m)}(\Sym_f^r, \sigma)
-\mathcal{S}_r\big| +\big|\mathcal{S}_r\big|
\nonumber\\
&+
\bigg|
\sideset{}{'}\sum_{f \in \mathscr{P}_k(q^m)}
\psi_x(\log L_{\mathcal{P}_q(\log q^m)}(\Sym_f^r, \sigma))
-
\widetilde{\mathcal{M}}_{\sigma, \mathcal{P}_q(\log q^m)}(\Sym^r, x)
\bigg|
\nonumber\\
&+
\left|
\widetilde{\mathcal{M}}_{\sigma, \mathcal{P}_q(\log q^m)}(\Sym^r, x)
-
\widetilde{\mathcal{M}}_{\sigma}(\Sym^r, x)
\right|,
\end{align}
this sum being denoted by
\begin{align*}
\mathcal{X}_{\log q^m}+\mathcal{Y}_{\log q^m}
+\mathcal{Z}_{\log q^m},
\end{align*}
say.
From Proposition~\ref{tildeM}, for any $\varepsilon>0$, there exists a number $N_0=N_0(\varepsilon)$ for which
\[
\left|
\widetilde{\mathcal{M}}_{\sigma, \mathcal{P}_q(\log q^m)}(\Sym^r, x)
-
\widetilde{\mathcal{M}}_{\sigma}(\Sym^r, x)
\right|<\varepsilon
\]
holds for any $q^m>N_0$, uniformly in $x\in\mathbb{R}$.
Therefore
\begin{align}\label{lim-Z}
\lim_{q^m\to\infty}\mathcal{Z}_{\log q^m}
=0.
\end{align}
On the estimate of $\mathcal{X}_{\log q^m}$, by using \eqref{P1} and \eqref{F}, we find that
\begin{align*}
&
\sideset{}{'}\sum_{f \in \mathscr{P}_k(q^m)}
|x| \bigg(
\big|
\log L_{\mathbb{P}_q}(\Sym_f^r, \sigma)
- \log L_{\mathcal{P}_q(\log q^m)}(\Sym_f^r, \sigma)
-\mathcal{S}_r\big|
\bigg)
\to 0
\end{align*}
as $q^m$ tends to $\infty$, uniformly in $|x|\leq R$.
Next, from \cite[(6.4)]{mu}, we have
\begin{equation*}
\sideset{}{'}\sum_{f \in \mathscr{P}_k(q^m)}
\big|\mathcal{S}_r\big|
\ll 
(\log q^m)^{-\delta/2}.
\end{equation*}
Hence we see that
\begin{align}\label{lim-X}
\lim_{q^m\to\infty} \mathcal{X}_{\log q^m}
=
0
\end{align}
uniformly in $|x|\leq R$.
\par
The remaining part of this section is devoted to the estimate of $\mathcal{Y}_{\log q^m}$.
According to the method \cite{im_2011} and \cite{mu}, we begin with the Taylor expansion
\[
\psi_x(g_{\sigma,  p}(t_p))=\exp(ixg_{\sigma,  p}(t_p))
=1+\sum_{n=1}^{\infty}\frac{(ix)^n}{n!}g_{\sigma,  p}^n(t_p),
\]
where
\begin{align*}
g_{\sigma,  p}^n(t_p)
=&
\left(-\log(1-t_p p^{-\sigma})\right)^n
= 
\left(\sum_{j=1}^{\infty}\frac{1}{j}\left(\frac{t_p}{ p^{\sigma}}\right)^j\right)^n
\\
=&
\sum_{a=1}^{\infty} \bigg(\sum_{\substack{a=j_1+\ldots+j_n\\ j_{\ell} \geq 1}}\frac{1}{j_1j_2\cdots j_n}\bigg)
\left(\frac{t_p}{ p^{\sigma}}\right)^a.
\end{align*}
Hence
\begin{align*}
\psi_x(g_{\sigma,  p}(t_p))
= &
1+\sum_{n=1}^{\infty}\frac{(ix)^n}{n!}
\sum_{a=1}^{\infty} \bigg(\sum_{\substack{a=j_1+\ldots+j_n\\ j_{\ell} \geq 1}}\frac{1}{j_1j_2\cdots j_n}\bigg)
\left(\frac{t_p}{ p^{\sigma}}\right)^a\\
=&1+
\sum_{a=1}^{\infty} 
\sum_{n=1}^a\frac{(ix)^n}{n!}\bigg(\sum_{\substack{a=j_1+\ldots+j_n\\ j_{\ell} \geq 1}}\frac{1}{j_1j_2\cdots j_n}\bigg)
\left(\frac{t_p}{ p^{\sigma}}\right)^a,
\end{align*}
which we can write as
\begin{align}\label{G-exp}
\psi_x(g_{\sigma,  p}(t_p))=\sum_{a=0}^{\infty}G_a( p, x) t_p^a
\end{align}
with
\begin{align*}
G_a( p, x)
=& \begin{cases}
       1 & a=0,\\
       \displaystyle{\frac{1}{ p^{a\sigma}}\sum_{n=1}^a\frac{(ix)^n}{n!}\bigg(\sum_{\substack{a=j_1+\ldots+j_n\\ j_{\ell} \geq 1}}\frac{1}{j_1j_2\cdots j_n}\bigg)} 
         & a\geq 1.      
       \end{cases}
\end{align*}
Define
\begin{align*}
G_a(x)
=&
 \begin{cases}
       1 & a=0,\\
       \displaystyle\sum_{n=1}^a\frac{x^n}{n!}
       \binom{a-1}{n-1}
         & a\geq 1.      
       \end{cases}
\end{align*}
This notation is the same as \cite[(63)]{im_2011}.
We have
\begin{equation*}
|G_a( p, x)|
\leq
\frac{1}{ p^{a\sigma}}G_a(|x|)
\end{equation*}
(see \cite[(65)]{im_2011}).
From \eqref{g_sigma} and \eqref{G-exp}, we find (using the convention $\lambda_f(p^n)=0$ for $n<0$) that
\begin{align}\label{daiji}
&
\sideset{}{'}\sum_{f \in \mathscr{P}_k(q^m)}
\psi_x(\log L_{\mathcal{P}_q(\log q^m)}(\Sym_f^r, \sigma))
\nonumber\\
=&
\sideset{}{'}\sum_{f \in \mathscr{P}_k(q^m)}
\prod_{ p \in \mathcal{P}_q(\log q^m)}
\psi_x (\mathscr{G}_{\sigma,  p}(\alpha_f^r( p)))
\nonumber\\
=&
\sideset{}{'}\sum_{f \in \mathscr{P}_k(q^m)}
\prod_{ p \in \mathcal{P}_q(\log q^m)}
\psi_x (g_{\sigma,  p}(\alpha_f^r(p)))\psi_x(g_{\sigma,  p}(\beta_f^r( p)))
\psi_x(g_{\sigma,  p}(\delta_{r,\text{even}}))
\nonumber\\
=&
\sideset{}{'}\sum_{f \in \mathscr{P}_k(q^m)}
\prod_{ p \in \mathcal{P}_q(\log q^m)}
\Big(\sum_{a_p=0}^{\infty}G_{a_p}( p, x) 
\alpha_f^{a_p r}( p)
\Big)
\Big(\sum_{b_p=0}^{\infty}G_{b_p}( p, x) 
\beta_f^{b_p r}( p)
\Big)
\nonumber\\
&\times
\Big(\sum_{c_p=0}^{\infty}G_{c_p}( p, x) 
\delta_{r,\text{even}}^{c_p}
\Big)
\nonumber\\      
=&
\sideset{}{'}\sum_{f \in \mathscr{P}_k(q^m)}
\prod_{ p \in \mathcal{P}_q(\log q^m)}
\Big(\sum_{c_p=0}^{\infty}G_{c_p}( p, x) 
\delta_{r,\text{even}}^{c_p}
\Big)
\Big(
\sum_{a_p=0}^{\infty} G_{a_p}^2( p, x) 
\nonumber\\
&
+ 
\sum_{0\leq b_p <  a_p}
G_{a_p}( p, x) G_{b_p}( p, x) 
\alpha_f^{a_p r}( p) \beta_f^{b_p r}( p)
\nonumber\\
&
+ 
\sum_{0\leq a_p < b_p} 
G_{a_p}( p, x) G_{b_p}( p, x) 
\alpha_f^{a_p r}( p) \beta_f^{b_p r}( p)
\Big)
\nonumber\\      
=&
\sideset{}{'}\sum_{f \in \mathscr{P}_k(q^m)}
\prod_{ p \in \mathcal{P}_q(\log q^m)}
\Big(\sum_{c_p=0}^{\infty}G_{c_p}( p, x) 
\delta_{r,\text{even}}^{c_p}
\Big)
\Big(
\sum_{a_p=0}^{\infty} G_{a_p}^2( p, x) 
\nonumber\\
&
+ 
\sum_{0\leq b_p}
\sum_{\nu_p=1}^{\infty}
G_{b_p+\nu_p}( p, x) G_{b_p}( p, x) 
\alpha_f^{\nu_p r}( p)
\nonumber\\
&
+ 
\sum_{0\leq a_p}
\sum_{\nu_p=1}^{\infty}
G_{a_p}( p, x) G_{a_p+\nu_p}( p, x) 
\alpha_f^{-\nu_p r}( p) 
\Big)
\nonumber\\      
=&
\sideset{}{'}\sum_{f \in \mathscr{P}_k(q^m)}
\prod_{ p \in \mathcal{P}_q(\log q^m)}
\Big(\sum_{c_p=0}^{\infty}G_{c_p}( p, x) 
\delta_{r,\text{even}}^{c_p}
\Big)
\Big(
\sum_{a_p=0}^{\infty} G_{a_p}^2( p, x)
\nonumber\\
&
+ 
\sum_{0\leq b_p}
\sum_{\nu_p=1}^{\infty}
G_{b_p+\nu_p}( p, x) G_{b_p}( p, x) 
(\alpha_f^{\nu_p r}( p)+ \alpha_f^{-\nu_p r}( p))
\Big)
\nonumber\\
=&
\sideset{}{'}\sum_{f \in \mathscr{P}_k(q^m)}
\prod_{ p \in \mathcal{P}_q(\log q^m)}
\Big(\sum_{c_p=0}^{\infty}G_{c_p}( p, x) 
\delta_{r,\text{even}}^{c_p}
\Big)
\nonumber\\
&
\times\bigg(
\sum_{\nu_p=0}^{\infty}
(\lambda_f(p^{\nu_p r})- \lambda_f(p^{\nu_p r-2}))
\Big(
\sum_{0\leq b_p}G_{b_p+\nu_p}( p, x) G_{b_p}( p, x)
\Big)
\bigg)
\nonumber\\
=:&
\sideset{}{'}\sum_{f \in \mathscr{P}_k(q^m)}
\prod_{ p \in \mathcal{P}_q(\log q^m)}
\Big(\sum_{c_p=0}^{\infty}G_{c_p}( p, x) 
\delta_{r,\text{even}}^{c_p}
\Big)
\nonumber\\
&
\times\bigg(
\sum_{\nu_p=0}^{\infty}
(\lambda_f(p^{\nu_p r})- \lambda_f(p^{\nu_p r-2}))
G_{p, x}(\nu_p)
\bigg),
\end{align}
where
\[
G_{p, x}(\nu_p)
=\sum_{0\leq b_p}G_{b_p+\nu_p}(p,x)G_{b_p}(p,x).
\]
For $r=1$, the right-hand side of \eqref{daiji} is
\begin{align}\label{daiji_r=1}
=&
\sideset{}{'}\sum_{f \in \mathscr{P}_k(q^m)}
\prod_{ p \in \mathcal{P}_q(\log q^m)}
\nonumber\\
&
\bigg(
G_{p, x}(0)
+
\lambda_f(p)
G_{p, x}(1)
+
(\lambda_f(p^{2})- 1)
G_{p, x}(2)
\nonumber\\
&+
\sum_{\nu_p=3}^{\infty}
(\lambda_f(p^{\nu_p})- \lambda_f(p^{\nu_p-2}))
G_{p, x}(\nu_p)
\bigg)
\nonumber\\
=&
\sideset{}{'}\sum_{f \in \mathscr{P}_k(q^m)}
\prod_{ p \in \mathcal{P}_q(\log q^m)}
\nonumber\\
&
\bigg(
(G_{p, x}(0)-G_{p, x}(2))
+
\sum_{\nu_p=1}^{\infty}
\lambda_f(p^{\nu_p})
G_{p, x}(\nu_p)
-
\sum_{\nu_p=1}^{\infty}
\lambda_f(p^{\nu_p})
G_{p, x}(\nu_p+2)
\bigg)
\nonumber\\
=&
\sideset{}{'}\sum_{f \in \mathscr{P}_k(q^m)}
\prod_{ p \in \mathcal{P}_q(\log q^m)}
\bigg(
\sum_{\nu_p=0}^{\infty}
\lambda_f(p^{\nu_p})
\Big(
G_{p, x}(\nu_p)
-
G_{p, x}(\nu_p+2)
\Big)
\bigg).
\end{align}
For $r=2$, the right-hand side of \eqref{daiji} is
\begin{align}\label{daiji_r=2}
=&
\sideset{}{'}\sum_{f \in \mathscr{P}_k(q^m)}
\prod_{ p \in \mathcal{P}_q(\log q^m)}
\Big(\sum_{c_p=0}^{\infty}G_{c_p}( p, x) 
\Big)
\nonumber\\
&
\bigg(
G_{p, x}(0)
+
(\lambda_f(p^{2})- 1)
G_{p, x}(1)
+
\sum_{\nu_p=2}^{\infty}
(\lambda_f(p^{2\nu_p})- \lambda_f(p^{2\nu_p-2}))
G_{p, x}(\nu_p)
\bigg)
\nonumber\\
=&
\sideset{}{'}\sum_{f \in \mathscr{P}_k(q^m)}
\prod_{ p \in \mathcal{P}_q(\log q^m)}
\Big(\sum_{c_p=0}^{\infty}G_{c_p}( p, x) 
\Big)
\nonumber\\
&
\bigg(
\Big(
G_{p, x}(0)-G_{p, x}(1)
\Big)
+
\sum_{\nu_p=1}^{\infty}
\lambda_f(p^{2\nu_p})
G_{p, x}(\nu_p)
-
\sum_{\nu_p=1}^{\infty}
\lambda_f(p^{2\nu_p}))
G_{p, x}(\nu_p+1)
\bigg)
\nonumber\\
=&
\sideset{}{'}\sum_{f \in \mathscr{P}_k(q^m)}
\prod_{ p \in \mathcal{P}_q(\log q^m)}
\Big(\sum_{c_p=0}^{\infty}G_{c_p}( p, x) 
\Big)
\nonumber\\
&\times
\bigg(
\sum_{\nu_p=0}^{\infty}
\lambda_f(p^{2\nu_p})
\Big(
G_{p, x}(\nu_p)
-
G_{p, x}(\nu_p+1)
\Big)
\bigg).
\end{align}
Let
\begin{align*}
  G_p(\text{main}, x)
  =&
  G_{p, x}(0)-G_{p, x}(\diamond)
  =
\begin{cases}
  G_{p, x}(0)-G_{p, x}(2)& r=1,\\
  G_{p, x}(0)-G_{p, x}(1)& r=2,
\end{cases}
\\
G_{f,p}(\text{error}, x)
=&
\sum_{\nu_p=1}^{\infty}\lambda_f(p^{\nu_p r})
(G_{p, x}(\nu_p)-G_{p, x}(\nu_p+\diamond))
\\
=&
\begin{cases}
  \displaystyle \sum_{\nu_p=1}^{\infty}\lambda_f(p^{\nu_p})(G_{p, x}(\nu_p)-G_{p, x}(\nu_p+2)) & r=1,\\
  \displaystyle \sum_{\nu_p=1}^{\infty}\lambda_f(p^{2\nu_p})(G_{p, x}(\nu_p)-G_{p, x}(\nu_p+1)) & r=2.
\end{cases}
\end{align*}
Write $\mathcal{P}_q(\log q^m)=\{p_1, p_2, \ldots, p_L\}$, where $p_{\ell}$ means the $\ell$th prime number.
From \eqref{daiji}, \eqref{daiji_r=1}, \eqref{daiji_r=2} and \eqref{P1} we have
\begin{align}\label{daiji12}
&
\sideset{}{'}\sum_{f \in \mathscr{P}_k(q^m)}
  \psi_x(\log L_{\mathcal{P}_q(\log q^m)}(\Sym_f^r, \sigma))
\nonumber\\
=&
\sideset{}{'}\sum_{f \in \mathscr{P}_k(q^m)}
\prod_{ p \in \mathcal{P}_q(\log q^m)}
\Big(\sum_{c_p=0}^{\infty}G_{c_p}( p, x) 
\delta_{r,\text{even}}^{c_p}
\Big)
(G_p(\text{main},x) + G_{f,p}(\text{error},x))
\nonumber\\
=&
\bigg(
\prod_{ p \in \mathcal{P}_q(\log q^m)}
\Big(\sum_{c_p=0}^{\infty}G_{c_p}( p, x) 
\delta_{r,\text{even}}^{c_p}
\Big)
\bigg)
\sideset{}{'}\sum_{f \in \mathscr{P}_k(q^m)}
\bigg(
\prod_{ p \in \mathcal{P}_q(\log q^m)}
G_p(\text{main},x)
\nonumber\\
&
+
\sum_{\substack{(j_0,\ldots,j_L)\neq (0,\ldots,0)\\j_{\ell}\in\{0,1\}}}
\prod_{\ell=1}^L
G_{p_{\ell}}^{1-j_{\ell}}(\text{main},x)
\prod_{\ell=1}^L
G_{f,p_{\ell}}^{j_{\ell}}(\text{error},x)
\bigg)
\nonumber\\
=&
\bigg(
\prod_{ p \in \mathcal{P}_q(\log q^m)}
\Big(\sum_{c_p=0}^{\infty}G_{c_p}( p, x) 
\delta_{r,\text{even}}^{c_p}
\Big)
\bigg)
\nonumber\\
&
\times
\bigg(
\prod_{ p \in \mathcal{P}_q(\log q^m)}
G_p(\text{main},x)
+
E(q^m)
\prod_{ p \in \mathcal{P}_q(\log q^m)}
G_p(\text{main},x)
\nonumber\\
&
+
\sideset{}{'}\sum_{f \in \mathscr{P}_k(q^m)}
\sum_{\substack{(j_0,\ldots,j_L)\neq (0,\ldots,0)\\j_{\ell}\in\{0,1\}}}
\prod_{\ell=1}^L
G_{p_{\ell}}^{1-j_{\ell}}(\text{main},x)
\prod_{\ell=1}^L
G_{f,p_{\ell}}^{j_{\ell}}(\text{error},x)
\bigg).
\end{align}
On the other hand, from \eqref{FFourier-transf}, Proposition~\ref{M_P} and \eqref{G-exp}, 
\begin{align}\label{Daiji}
&
\widetilde{\mathcal{M}}_{\sigma, \mathcal{P}_q(\log q^m)}(\Sym^r, x)
=
\prod_{p \in \mathcal{P}_q(\log q^m)}
\widetilde{\mathcal{M}}_{\sigma, p}(\Sym^r, x)
\nonumber\\
=&
\prod_{p \in \mathcal{P}_q(\log q^m)}
\int_{\mathbb{R}}
\mathcal{M}_{\sigma, p}(\Sym^r, u)
\psi_x(u) du
\nonumber\\
=&
\prod_{p \in \mathcal{P}_q(\log q^m)}
\int_{\mathcal{T}}
\psi_x\big(\mathscr{G}_{\sigma, p}(t_p^r))\big)
\bigg(\frac{t_p^2 -2 + t_p^{-2}}{-2}\bigg)
\frac{dt_p}{2\pi i t_p}
\nonumber\\
=&
\prod_{ p\in \mathcal{P}_q(\log q^m)}
\bigg(
\int_{\mathcal{T}}
\psi_x\big(g_{\sigma, p}( t_p^r)\big)
\psi_x\big(g_{\sigma, p}( t_p^{-r})\big)
\psi_x\big(g_{\sigma, p}(\delta_{r,\text{even}})\big)
\nonumber\\
&\times
\bigg(\frac{t_p^2 -2 + t_p^{-2}}{-2}\bigg)
\frac{dt_p}{2\pi i t_p}
\bigg)
\nonumber\\
=&
\prod_{ p\in \mathcal{P}_q(\log q^m)}
\int_{\mathcal{T}}
\Big(\sum_{a_p=0}^{\infty}G_{a_p}( p, x) t_p^{a_p r} \Big)
\Big(\sum_{b_p=0}^{\infty}G_{b_p}( p, x) t_p^{-b_p r}\Big)
\nonumber\\
&\times
\Big(\sum_{c_p=0}^{\infty}G_{c_p}( p, x)\delta_{r,\text{even}}^{c_p}\Big)
\bigg(\frac{t_p^2 -2 + t_p^{-2}}{-2}\bigg)
\frac{dt_p}{2\pi i t_p}
\bigg)
\nonumber\\
=&
\prod_{ p\in \mathcal{P}_q(\log q^m)}
\Big(\sum_{c_p=0}^{\infty}G_{c_p}( p, x)\delta_{r,\text{even}}^{c_p}\Big)
\nonumber\\
&\times
\bigg(
\int_{\mathcal{T}}
\Big(\sum_{a_p=0}^{\infty}G_{a_p}^2( p, x) 
+
\sum_{a_p=0}^{\infty}
\sum_{\substack{b_p=0\\ a_p\neq b_p}}^{\infty}
G_{a_p}( p, x)G_{b_p}( p, x) t_p^{(a_p-b_p)r}
\Big)
\nonumber\\
&\times
\bigg(\frac{t_p^2 -2 + t_p^{-2}}{-2}\bigg)
\frac{dt_p}{2\pi i t_p}
\bigg)
\nonumber\\
=&
\prod_{ p\in \mathcal{P}_q(\log q^m)}
\Big(\sum_{c_p=0}^{\infty}G_{c_p}( p, x)\delta_{r,\text{even}}^{c_p}\Big)
\nonumber\\
&\times
\bigg(
\int_{\mathcal{T}}
\Big(\sum_{a_p=0}^{\infty}G_{a_p}^2( p, x) 
+
\sum_{a_p=0}^{\infty}
\sum_{\nu_p=1}^{\infty}
G_{a_p}( p, x)G_{a_p+\nu_p}( p, x) t_p^{-\nu_p r}
\nonumber\\
&+
\sum_{b_p=0}^{\infty}
\sum_{\nu_p=1}^{\infty}
G_{b_p+\nu_p}( p, x)G_{b_p}( p, x) t_p^{\nu_p r}
\Big)
\bigg(\frac{t_p^2 -2 + t_p^{-2}}{-2}\bigg)
\frac{dt_p}{2\pi i t_p}
\bigg).
\end{align}
For $r=1$, the right-hand side of \eqref{Daiji} is
\begin{align}\label{Daiji_r=1}
=&
\prod_{ p\in \mathcal{P}_q(\log q^m)}
\int_{\mathcal{T}}
\bigg(
\Big(\sum_{a_p=0}^{\infty}G_{a_p}^2( p, x) 
+
\sum_{a_p=0}^{\infty}
\sum_{\nu_p=1}^{\infty}
G_{a_p}( p, x)G_{a_p+\nu_p}( p, x) t_p^{-\nu_p}
\nonumber\\
&+
\sum_{b_p=0}^{\infty}
\sum_{\nu_p=1}^{\infty}
G_{b_p+\nu_p}( p, x)G_{b_p}( p, x) t_p^{\nu_p}
\Big)
\bigg(\frac{t_p^2 -2 + t_p^{-2}}{-2}\bigg)
\frac{dt_p}{2\pi i t_p}
\bigg)
\nonumber\\
=&
\prod_{ p\in \mathcal{P}_q(\log q^m)}
\bigg(
\sum_{a_p=0}^{\infty}G_{a_p}^2( p, x) 
-
\sum_{a_p=0}^{\infty}
G_{a_p}( p, x)G_{a_p+2}( p, x) 
\bigg)
\nonumber\\
=&
\prod_{ p\in \mathcal{P}_q(\log q^m)}
(G_{p, x}(0)-G_{p, x}(2))
=
\prod_{ p\in \mathcal{P}_q(\log q^m)}
G_p(\text{main},x).
\end{align}
For $r=2$, the right-hand side of \eqref{Daiji} is
\begin{align}\label{Daiji_r=2}
=&
\prod_{ p\in \mathcal{P}_q(\log q^m)}
\Big(\sum_{c_p=0}^{\infty}G_{c_p}( p, x)\Big)
\int_T
\bigg(
\Big(\sum_{a_p=0}^{\infty}G_{a_p}^2( p, x)
\nonumber\\
&+
\sum_{a_p=0}^{\infty}
\sum_{\nu_p=1}^{\infty}
G_{a_p}( p, x)G_{a_p+\nu_p}( p, x) t_p^{-2\nu_p}
\nonumber\\
&+
\sum_{b_p=0}^{\infty}
\sum_{\nu_p=1}^{\infty}
G_{b_p+\nu_p}( p, x)G_{b_p}( p, x) t_p^{2\nu_p}
\Big)
\bigg(\frac{t_p^2 -2 + t_p^{-2}}{-2}\bigg)
\frac{dt_p}{2\pi i t_p}
\bigg)
\nonumber\\
=&
\prod_{ p\in \mathcal{P}_q(\log q^m)}
\Big(\sum_{c_p=0}^{\infty}G_{c_p}( p, x)\Big)
\bigg(
\sum_{a_p=0}^{\infty}G_{a_p}^2( p, x) 
-
\sum_{a_p=0}^{\infty}
G_{a_p}( p, x)G_{a_p+1}( p, x)\bigg)
\nonumber\\
=&
\prod_{ p\in \mathcal{P}_q(\log q^m)}
\Big(\sum_{c_p=0}^{\infty}G_{c_p}( p, x)\Big)
(G_{p, x}(0)-G_{p, x}(1))
\nonumber\\
=&
\prod_{ p\in \mathcal{P}_q(\log q^m)}
\Big(\sum_{c_p=0}^{\infty}G_{c_p}( p, x)\Big)
G_p(\text{main}, x).
\end{align}
From \eqref{Daiji}, \eqref{Daiji_r=1} and \eqref{Daiji_r=2}, we obtain
\begin{equation}\label{Daiji12}
\widetilde{\mathcal{M}}_{\sigma, \mathcal{P}_q(\log q^m)}(\Sym^r, x)
=
\prod_{ p\in \mathcal{P}_q(\log q^m)}
\Big(\sum_{c_p=0}^{\infty}G_{c_p}( p, x)\delta_{r,\text{even}}^{c_p}\Big)
G_p(\text{main}, x).
\end{equation}
Since
\[
\bigg|\sum_{c_p=0}^{\infty} G_{c_p}(p, x)\bigg|
=
|\psi_x(g_{\sigma, p}(1))|
=|e^{ix\log(1-p^{-\sigma})}|=1
\]
by \eqref{G-exp}, from
\eqref{daiji12} and \eqref{Daiji12} we now obtain
\begin{align*}
&
\mathcal{Y}_{\mathcal{P}_q(\log q^m)}
\nonumber\\
=&
\prod_{ p\in \mathcal{P}_q(\log q^m)}
\Big|\sum_{c_p=0}^{\infty}G_{c_p}( p, x)\delta_{r,\text{even}}^{c_p}\Big|
\bigg|
E(q^m)
\prod_{ p \in \mathcal{P}_q(\log q^m)}
G_p(\text{main},x)
\nonumber\\
&
+
\sideset{}{'}\sum_{f \in \mathscr{P}_k(q^m)}
\sum_{\substack{(j_0,\ldots,j_L)\neq (0,\ldots,0)\\j_{\ell}\in\{0,1\}}}
\prod_{\ell=1}^L
G_{p_{\ell}}^{1-j_{\ell}}(\text{main},x)
\prod_{\ell=1}^L
G_{f,p_{\ell}}^{j_{\ell}}(\text{error},x)
\bigg|
\nonumber\\
=&
\bigg|
E(q^m)
\prod_{ p \in \mathcal{P}_q(\log q^m)}
G_p(\text{main},x)
\nonumber\\
&
+
\sum_{\substack{(j_0,\ldots,j_L)\neq (0,\ldots,0)\\j_{\ell}\in\{0,1\}}}
\prod_{\ell=1}^L
G_{p_{\ell}}^{1-j_{\ell}}(\text{main},x)
\bigg(
\sideset{}{'}\sum_{f \in \mathscr{P}_k(q^m)}
\prod_{\ell=1}^L
G_{f,p_{\ell}}^{j_{\ell}}(\text{error},x)
\bigg)
\bigg|
\nonumber\\
\leq &
\bigg|
E(q^m)
\prod_{ p \in \mathcal{P}_q(\log q^m)}
G_p(\text{main},x)
\bigg|
\nonumber\\
&
+
\bigg|
\sum_{\substack{(j_0,\ldots,j_L)\neq (0,\ldots,0)\\j_{\ell}\in\{0,1\}}}
\prod_{\ell=1}^L
G_{p_{\ell}}^{1-j_{\ell}}(\text{main},x)
\bigg(
\sideset{}{'}\sum_{f \in \mathscr{P}_k(q^m)}
\prod_{\ell=1}^L
G_{f,p_{\ell}}^{j_{\ell}}(\text{error},x)
\bigg)
\bigg|
\nonumber\\
=: &
\mathcal{Y}''_{\mathcal{P}_q(\log q^m)}
+
\mathcal{Y}'_{\mathcal{P}_q(\log q^m)},
\end{align*}
say.
Here, the definition of $\mathcal{Y}''_{\mathcal{P}_q(\log q^m)}$ and $\mathcal{Y}'_{\mathcal{P}_q(\log q^m)}$ are different from those in \cite{mu}, but we will prove that they tend to $0$ by the same argument as in \cite[Section 6]{mu}.
We first consider the inner sum in the definition of $\mathcal{Y}'_{\mathcal{P}_q(\log q^m)}$.
Let
\[
\mathsf{G}_x(n)
=
\prod_{\substack{1\leq \ell \leq L \\ j_{\ell}=1}}
(G_{p_{\ell}, x}(\nu_{p_{\ell}})-G_{p_{\ell}, x}(\nu_{p_{\ell}}+\diamond)) 
\]
if $n$ is of the form
\[
n=\prod_{\substack{1\leq \ell \leq L \\ j_{\ell}=1}} p_{\ell}^{\nu_{p_{\ell}} r},
\quad \nu_{p_{\ell}}\geq 1,
\]
and $\mathsf{G}_x(n)=0$ otherwise.
Let $M$ be a positive number, and $\eta>0$ be an arbitrarily small positive number.
We see that
\begin{align*}
  &
  \sideset{}{'}\sum_{f \in \mathscr{P}_k(q^m)}
  \prod_{\ell=1}^L
  G_{f,p_{\ell}}^{j_{\ell}}(\text{error},x)
  \nonumber\\
  =&
  \sideset{}{'}\sum_{f \in \mathscr{P}_k(q^m)}
  \prod_{\ell=1}^L
  \bigg(
  \sum_{\nu_{p_{\ell}}=1}^{\infty}\lambda_f(p_{\ell}^{\nu_{p_{\ell}} r})
  (G_{p_{\ell}, x}(\nu_{p_{\ell}})-G_{p_{\ell}, x}(\nu_{p_{\ell}}+\diamond))
  \bigg)^{j_{\ell}}
  \nonumber\\
  =&
  \sideset{}{'}\sum_{f \in \mathscr{P}_k(q^m)}
  \sum_{n>1}\lambda_f(n)\mathsf{G}_x(n)
  \nonumber\\
  =&
  \sideset{}{'}\sum_{f \in \mathscr{P}_k(q^m)}
  \bigg(
  \sum_{M\geq n>1}\lambda_f(n)\mathsf{G}_x(n)
  +
  \sum_{n> M}\lambda_f(n)\mathsf{G}_x(n)
  \bigg)
  \nonumber\\
\ll&
  \bigg|
  \sideset{}{'}\sum_{f \in \mathscr{P}_k(q^m)}
\sum_{M\geq n>1}\lambda_f(n)\mathsf{G}_x(n)
\bigg|
  +
\bigg|  
  \sideset{}{'}\sum_{f \in \mathscr{P}_k(q^m)}
  \sum_{n>M}\lambda_f(n)\mathsf{G}_x(n)
\bigg|
\nonumber\\
\ll&
  \bigg|
  \sideset{}{'}\sum_{f \in \mathscr{P}_k(q^m)}
\sum_{M\geq n>1}\lambda_f(n)\mathsf{G}_x(n)
\bigg|
  +
  \sideset{}{'}\sum_{f \in \mathscr{P}_k(q^m)}
  \sum_{n>M} n^{\eta}|\mathsf{G}_x(n)|
\nonumber\\
\ll&
E(q^m) \sum_{M\geq n>1} n^{(k-1)/2} |\mathsf{G}_x(n)|
  +
\sum_{n>M} n^{\eta}|\mathsf{G}_x(n)|,
\end{align*}
where on the last inequality we used \eqref{P}.
Therefore
\begin{align}\label{Y-0-bis}
&\mathcal{Y}'_{\mathcal{P}_q(\log q^m)}\ll
\sum_{\substack{(j_0,\ldots,j_L)\neq (0,\ldots,0)\\j_{\ell}\in\{0,1\}}}
\prod_{\ell=1}^L
G_{p_{\ell}}^{1-j_{\ell}}(\text{main},x)\nonumber\\
&\times\bigg(E(q^m) \sum_{M\geq n>1} n^{(k-1)/2} |\mathsf{G}_x(n)|
  +
\sum_{n>M} n^{\eta}|\mathsf{G}_x(n)|\bigg).
\end{align}
This corresponds to \cite[(6.13)]{mu}.
By \cite[(6.14) and (6.15)]{mu},
we have the estimates of $G_{p,x}(\nu_p)$ as
\[
|G_{p, x}(\nu_p)|
\leq
\begin{cases}
  \displaystyle \bigg(\exp\bigg(\frac{|x|}{p^{\sigma}-1}\bigg)\bigg)^2 & \nu_p=0, \\
  \displaystyle \frac{1}{p^{\nu_p \sigma/2}}\bigg(\exp\bigg(\frac{|x|}{p^{\sigma}-1}\bigg)\bigg)^2\bigg(\exp\bigg(\frac{|x|}{p^{\sigma/2}-1}\bigg)\bigg)
  & \nu_p \neq 0.
  \end{cases}
\]
These estimates yield that for $\sigma>1/2$ and $|x|<R$, there exists a large
$p_0=p_0(\sigma,R)$ for which 
\begin{align*}
  &
  |G(\text{main}, x)|
  =
  |G_{p, x}(0)-G_{p, x}(\diamond)|
  \nonumber\\
  \leq&
  \bigg(\exp\bigg(\frac{|x|}{p^{\sigma}-1}\bigg)\bigg)^2\bigg(1+
  \frac{1}{p^{\nu_p\sigma/2}}\exp\bigg(\frac{|x|}{p^{\sigma/2}-1}\bigg)\bigg)
  \leq
  2
\end{align*}
holds for any $p>p_0$.   Therefore
\begin{align*}
\mathcal{Y}''_{\mathcal{P}_q(\log q^m)}
=
|E(q^m)|
\prod_{\ell=1}^L
|G_{p_{\ell}}(\text{main},x)|
\ll_{\sigma,R} 
|E(q^m)| 2^L \to 0
\end{align*}
as $q^m\to \infty$ by the same argument as in \cite[p.~2077]{mu}.
Further, 
since $\nu_p\geq 1$ we have
\begin{align}\label{Y'1}
|\mathsf{G}_x(n)|
\leq &
\prod_{\substack{1\leq \ell \leq L\\j_{\ell}=1}}
(2\max\{|G_{p_{\ell}, x}(\nu_p)|,\; |G_{p_{\ell}, x}(\nu_p+\diamond)|\})
\nonumber\\
\leq&
\frac{2^L}{n^{\sigma r/2}}
\prod_{\substack{1\leq \ell \leq L\\j_{\ell}=1}}
\bigg(\exp\bigg(\frac{|x|}{p^{\sigma}-1}\bigg)\bigg)^2\bigg(\exp\bigg(\frac{|x|}{p^{\sigma/2}-1}\bigg)\bigg).
\end{align}
From the estimates \eqref{Y-0-bis} and \eqref{Y'1}, we see that $\mathcal{Y}'_{\log q^m}\to 0$ as $q^m\to\infty$ by the same argument as in \cite[pp.~2075--2077]{mu}.
Therefore we conclude that
\begin{equation}\label{lim-Y}
 \lim_{q^m\to\infty} \mathcal{Y}_{\log q^m}\to 0.
\end{equation}
\par
Finally we see that Lemma~\ref{keylemma} is established, by substituting \eqref{lim-Z}, \eqref{lim-X} and \eqref{lim-Y} into \eqref{basic}.
\end{proof}
%
%
\section{The first moment of $\log{L}_{\mathbb{P}_q}(\Sym_f^r, \sigma)$}\label{sec-Mine}
In this and the next sections we will prove Theorem \ref{main2}.
Let $\Psi_1(x)=cx$ with a constant $c \neq 0$. 
Our aim in this section is to show that in both Cases I and II 
\begin{gather}\label{eq:1stMoment}
\Avg \Psi_1(\log{L}_{\mathbb{P}_q}(\Sym_f^r, \sigma)) 
= c
\begin{cases}
-2^{-1} \sum_{p \in \mathbb{P}_*} p^{-2\sigma}
& \text{$r$ is odd}, 
\\
\sum_{p \in \mathbb{P}_*} \sum_{j > 1} j^{-1} p^{-j \sigma}
& \text{$r$ is even}
\end{cases}
\end{gather}
holds for any $\sigma>1/2$ under the assumptions of Theorem \ref{main2}. 
The following two lemmas are analogous to Lemmas 8.1 and 8.2 of Granville and Soundararajan \cite{GranvilleSoundararajan2001}. 
Note that the original results in \cite{GranvilleSoundararajan2001} were proved under milder assumption on zero-free regions, but we suppose GRH in the following for convenience. 
%
%
\begin{lem}\label{lem:1}
Let $s=\sigma+it$ with $\sigma>1/2$ and $|t| \leq q^{dm}$, where $d$ is a constant which does not depend on $q$ and $m$. 
Suppose Assumption (GRH). 
Then 
\begin{gather}\label{eq:upperBD}
|\log{L}_{\mathbb{P}_q}(\Sym_f^r, s)|
\ll_{r,d} \frac{\log{q^m}}{\sigma-1/2}
\end{gather}
for any primitive form $f$ in $S_k(q^m)$ such that $L(\Sym_f^r, s)$ is an entire function. 
\end{lem}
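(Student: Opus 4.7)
The plan is to deduce the desired bound by combining the logarithmic-derivative estimate \eqref{eq:AC}, already established in Section~\ref{sec:sym}, with a standard integration argument starting from a point inside the region of absolute convergence. This mirrors the strategy of Lemma~8.1 of Granville-Soundararajan.

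First I would dispose of the range $\sigma \geq 2$ directly. Deligne's bound $|\alpha_f(p)|=|\beta_f(p)|=1$ gives
\[
|\log L_{\mathbb{P}_q}(\Sym_f^r, s)| \leq (r+1)\sum_{p}\sum_{j \geq 1}\frac{1}{j\,p^{2j}} \ll_r 1,
\]
which is absorbed into $\log q^m/(\sigma-1/2)$ since the latter is bounded below by a positive absolute constant whenever $q \geq 2$, $m \geq 1$, and $\sigma$ is bounded.

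For $1/2 < \sigma < 2$, the key identity is
\[
\log L_{\mathbb{P}_q}(\Sym_f^r, \sigma+it) = \log L_{\mathbb{P}_q}(\Sym_f^r, 2+it) - \int_{\sigma}^{2}\frac{L'_{\mathbb{P}_q}}{L_{\mathbb{P}_q}}(\Sym_f^r, u+it)\,du.
\]
Under GRH, $L(\Sym_f^r, s)$ is zero-free in the simply connected half-plane $\mathrm{Re}(s) > 1/2$, and since the omitted local factor at $q$ has its poles only in $\mathrm{Re}(s) < 1/2$ (as witnessed by the bounds on its local parameters used in deriving \eqref{eq:L_q}), the partial product $L_{\mathbb{P}_q}(\Sym_f^r, s)$ is likewise nonvanishing there; its logarithm is therefore well defined and the identity above is valid. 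The boundary term is $O_r(1)$ by absolute convergence at $\sigma=2$, and by \eqref{eq:AC} the integrand satisfies
\[
\frac{L'_{\mathbb{P}_q}}{L_{\mathbb{P}_q}}(\Sym_f^r, u+it) \ll_r \log(q^m(|t|+1)) \ll_{r,d}\log q^m
\]
uniformly for $1/2 \leq u \leq 4$, using the hypothesis $|t| \leq q^{dm}$. The integral is then at most $(2-\sigma)\cdot O_{r,d}(\log q^m)$, and the elementary inequality $(2-\sigma)(\sigma-1/2) \leq 9/16$ on $(1/2,2)$ converts this into $O_{r,d}(\log q^m/(\sigma-1/2))$, which completes the argument.

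The only nontrivial analytic input is the uniform logarithmic-derivative bound \eqref{eq:AC}, which is already available from Section~\ref{sec:sym}; everything else reduces to absolute convergence and the fundamental theorem of calculus. I therefore do not anticipate a genuine obstacle in this lemma, the real work having been done in proving Corollary~\ref{cor:dlog}.
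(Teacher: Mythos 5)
Your proof is correct and self-contained, but it takes a genuinely different route from the paper's. The paper does \emph{not} invoke the logarithmic-derivative bound \eqref{eq:AC} in this lemma; instead it applies the Borel--Carath\'eodory theorem directly to $\log L_{\mathbb{P}_q}(\Sym_f^r, s)$, using two concentric discs centred at $s_0 = 2+it$ with radii $R_1 = 2-\sigma_0$ and $R_2 = 2-\sigma$ where $\sigma_0 = \tfrac12(\sigma+\tfrac12)$. The factor $1/(\sigma - 1/2)$ then emerges from the ratio $1/(R_1-R_2) \asymp 1/(\sigma-1/2)$, the $\max \Re\log L$ term is controlled by the convexity bound of Proposition~\ref{prop:convbd}, and the value at $s_0$ by the Dirichlet series. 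You instead integrate $L'_{\mathbb{P}_q}/L_{\mathbb{P}_q}$ along a horizontal segment and quote \eqref{eq:AC}, which the paper derives separately (via Corollary~\ref{cor:dlog}, itself proved \`a la Duke using Proposition~\ref{prop:convbd}). Both proofs ultimately rest on the same convexity input, but yours delegates the Borel--Carath\'eodory machinery entirely to Corollary~\ref{cor:dlog} and is shorter as a result. One thing worth flagging: taking \eqref{eq:AC} at face value (uniformly in $1/2 \le \sigma \le 4$), your argument actually yields the \emph{stronger} conclusion $|\log L_{\mathbb{P}_q}(\Sym_f^r, s)| \ll_{r,d} \log q^m$, with no $1/(\sigma-1/2)$ loss; the inequality $(2-\sigma)(\sigma-\tfrac12) \le 9/16$ is only needed to package this as the weaker form requested. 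This discrepancy is a signal that \eqref{eq:AC} is stated more optimistically than one normally expects under GRH (a $1/(\sigma-1/2)$ factor is standard near the critical line), but even with that correction your integral picks up only a logarithmic factor $\log\frac{1}{\sigma-1/2}$, which is still dominated by $1/(\sigma-1/2)$, so your proof survives intact. The paper's direct Borel--Carath\'eodory argument sidesteps this sensitivity altogether.
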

\begin{proof}
Let $s=\sigma+it$ with $\sigma>1$. 
From \eqref{log-L-P-def} we see that $\log{L}_{\mathbb{P}_q}(\Sym_f^r, s)$ has the Dirichlet series representation
\begin{gather}\label{eq:Dseries}
\log{L}_{\mathbb{P}_q}(\Sym_f^r, s)
= \sum_{n=2}^{\infty} \frac{\gamma(n; \Sym_f^r) \Lambda(n)}{\log{n}} n^{-s}, 
\end{gather}
where $\Lambda(n)$ denotes the usual von Mangoldt function, and
$\gamma(n; \Sym_f^r)$ is determined by
\begin{align}\label{eq:gamma}
\gamma(p^j; \Sym_f^r)
&= \sum_{h=0}^{r} \alpha_f(p)^{j(r-h)} \beta_f(p)^{jh}
\nonumber\\
&= \sum_{h=0}^{\rho} 2 \cos(j(r-2h) \theta_f(p)) +\delta_{r, \mathrm{even}} 
\end{align}
if $p \in \mathbb{P}_q$ and $j \geq1$, and $\gamma(n; \Sym_f^r)=0$ otherwise. 
We have $|\gamma(n; \Sym_f^r)| \leq r+1$ for any $n$ by definition. 
Therefore formula \eqref{eq:Dseries} yields
\begin{gather}\label{eq:uppBD1}
|\log{L}_{\mathbb{P}_q}(\Sym_f^r, s)|
\leq \sum_{n=2}^{\infty} \frac{(r+1) \Lambda(n)}{\log{n}} n^{-\sigma}
=(r+1) \log{\zeta}(\sigma)
\end{gather}
for $s=\sigma+it$ with $\sigma>1$. 
Hence \eqref{eq:upperBD} follows for $\sigma \geq 2$, and we assume $1/2<\sigma<2$ in the following. 
Put $\sigma_0=\frac{1}{2}(\sigma+\frac{1}{2})$. 
We consider closed discs $\mathcal{R}_1$ and $\mathcal{R}_2$,
centered at $s_0=2+it$, and of radii $R_1=2-\sigma_0$ and $R_2=2-\sigma$,
respectively.
Then the GRH implies that $\log{L}_{\mathbb{P}_q}(\Sym_f^r, s)$ is analytic on 
$\mathcal{R}_1$. 
Since the point $s$ belongs to the smaller disc $\mathcal{R}_2$, we deduce 
\begin{align} \label{eq:BCthm}
|\log{L}_{\mathbb{P}_q}(\Sym_f^r, s)|
&\leq \frac{2 R_2}{R_1-R_2} \max_{|s-s_0| \leq R_1} \Re \log{L}_{\mathbb{P}_q}(\Sym_f^r, s)
\nonumber\\
&\qquad +\frac{R_1+R_2}{R_1-R_2} |\log{L}_{\mathbb{P}_q}(\Sym_f^r, s_0)| 
\end{align}
from the Borel-Carath\'{e}odory theorem. 
We have 
\begin{gather*}
\frac{2 R_2}{R_1-R_2}
\ll \frac{1}{\sigma-1/2}
\quad\text{and}\quad
\frac{R_1+R_2}{R_1-R_2}
\ll \frac{1}{\sigma-1/2}
\end{gather*}
with absolute implied constants. 
We estimate the first term of the right-hand side of \eqref{eq:BCthm}. 
As in Section \ref{sec:sym}, we denote by $\pi_r$ the automorphic representation attached to $\Sym_f^r$. 
By \eqref{eq:local} and the bounds for $|\gamma_j(p)|$, we have 
\begin{gather*}
L_q(\pi_r, s)^{-1}
= \prod_{j=1}^{r+1} (1-\gamma_j(p) p^{-s})
\ll_r 1. 
\end{gather*}
Therefore, using formula \eqref{eq:sym_Pq}, we obtain the bound $L_{\mathbb{P}_q}(\Sym_f^r, s) \ll_r |L(\pi_r, s)|$. 
Furthermore, we apply Proposition \ref{prop:convbd} to deduce 
\begin{gather*}
L_{\mathbb{P}_q}(\Sym_f^r, s)
\ll_r \left( q^{c(r) m} (|t|+1)^{r+1} \right)^{1/2}
\ll_r q^{(c(r)+dr+d)m/2}. 
\end{gather*}
Then we see that 
\begin{gather*}
\max_{|s-s_0| \leq R_1} \Re \log{L}_{\mathbb{P}_q}(\Sym_f^r, s)
= \max_{|s-s_0| \leq R_1} \log|{L}_{\mathbb{P}_q}(\Sym_f^r, s)|
\ll_{r,d} \log{q^m}
\end{gather*}
holds. 
By inequality \eqref{eq:uppBD1} we obtain 
\begin{gather*}
|\log{L}_{\mathbb{P}_q}(\Sym_f^r, s_0)|
\leq (r+1) \log{\zeta}(2) 
\ll_r 1
\end{gather*}
due to $\Re(s_0)=2$. 
Hence, we conclude from \eqref{eq:BCthm} and the above estimates that 
\begin{gather*}
|\log{L}_{\mathbb{P}_q}(\Sym_f^r, s)|
\ll_{r, d} \frac{1}{\sigma-1/2} \log{q^m} + \frac{1}{\sigma-1/2}
\ll \frac{\log{q^m}}{\sigma-1/2}
\end{gather*}
as desired. 
\end{proof}
%
%
\begin{lem}\label{lem:2}
Let $\sigma>1/2$ and $2 \leq y \leq q^{dm}$, where $d$ is a constant which does not depend on $q$ and $m$. 
Put $\sigma_1=\min \big\{ \frac{1}{2}(\sigma+\frac{1}{2}), \frac{1}{2}+\frac{1}{\log{y}} \big\}$. 
Suppose Assumption (GRH). 
Then 
\begin{gather*}
\log{L}_{\mathbb{P}_q}(\Sym_f^r, \sigma)
= \sum_{n<y} \frac{\gamma(n; \Sym_f^r) \Lambda(n)}{\log{n}} n^{-\sigma}
+ O_{r,d}\left( \frac{\log{q^m}}{(\sigma_1-1/2)^2} y^{\max\{\sigma_1-\sigma, -1\}} \right)
\end{gather*}
for any primitive form $f$ in $S_k(q^m)$ such that $L(\Sym_f^r, s)$ is an entire function. 
\end{lem}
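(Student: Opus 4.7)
The plan is to follow the Granville--Soundararajan strategy of \cite[Lemma 8.2]{GranvilleSoundararajan2001}: represent the truncated sum as a Perron-type contour integral of $\log L_{\mathbb{P}_q}(\Sym_f^r,\sigma+w)$, then shift the contour across $w=0$ to extract $\log L_{\mathbb{P}_q}(\Sym_f^r,\sigma)$ as a residue and bound what remains using Lemma~\ref{lem:1}.

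For $\mathrm{Re}(w) > 1-\sigma$, the Dirichlet series \eqref{eq:Dseries} reads $\sum_{n\geq 2} a_n/n^w = \log L_{\mathbb{P}_q}(\Sym_f^r,\sigma+w)$ with $a_n = \gamma(n;\Sym_f^r)\Lambda(n)/(n^\sigma\log n)$. First I would apply Perron's formula (either the standard version with kernel $y^w/w$, or a slightly smoothed variant) to obtain, for $c > \max(1-\sigma,0)$ and a truncation height $T$ chosen as a moderate power of $q^m$,
\[
\sum_{n<y} a_n = \frac{1}{2\pi i}\int_{c-iT}^{c+iT}\log L_{\mathbb{P}_q}(\Sym_f^r,\sigma+w)\,\frac{y^w}{w}\,dw + E_{\mathrm{Per}},
\]
where $E_{\mathrm{Per}}$ is the usual Perron truncation error, easily absorbed into the stated bound for the chosen $T$. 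Under Assumption (GRH) combined with the entireness of $L(\Sym_f^r,s)$, the function $\log L_{\mathbb{P}_q}(\Sym_f^r,z)$ is holomorphic on $\mathrm{Re}(z) > 1/2$, and hence one may shift the line of integration from $\mathrm{Re}(w)=c$ down to $\mathrm{Re}(w)=\sigma' := \max(\sigma_1-\sigma,-1)$. Because $\sigma_1 > 1/2$, the only singularity of the integrand in the intervening strip is the simple pole of $1/w$ at $w=0$, whose residue equals $\log L_{\mathbb{P}_q}(\Sym_f^r,\sigma)$; the horizontal segments at heights $\pm T$ contribute negligibly.

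On the shifted vertical line $\mathrm{Re}(w)=\sigma'$, we have $\mathrm{Re}(\sigma+w) \geq \sigma_1 > 1/2$, so Lemma~\ref{lem:1} provides the pointwise bound $|\log L_{\mathbb{P}_q}(\Sym_f^r,\sigma+w)| \ll_{r,d} (\log q^m)/(\sigma_1-1/2)$. Multiplying by $|y^w| = y^{\sigma'}$ and integrating $|w|^{-1}\,dt$ against this bound yields the estimate
\[
\ll_{r,d} \frac{\log q^m}{(\sigma_1-1/2)^{2}}\,y^{\sigma'},
\]
which is precisely the claimed error. The main obstacle will be obtaining exactly the quadratic dependence $(\sigma_1-1/2)^{-2}$ rather than the naive $(\sigma_1-1/2)^{-1}\log T$: the extra factor of $(\sigma_1-1/2)^{-1}$ arises either from carefully combining the $\log T$ coming from $\int dt/|\sigma'+it|$ with the lower bound $\sigma_1-1/2 \geq 1/\log y$ (valid in the regime $\sigma_1 = 1/2+1/\log y$), or, more cleanly, from using a smoothed Perron kernel of the form $y^w/(w(w+1))$ and recovering the sharp cutoff via partial summation. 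The cap $\sigma' \geq -1$ is imposed so that the convexity bound underlying Lemma~\ref{lem:1} (and ultimately Proposition~\ref{prop:convbd}) remains effective and so that the Perron truncation error does not resurface when the contour is pushed too far to the left.
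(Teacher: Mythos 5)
Your outline is essentially the paper's proof: Perron's formula for $\log L_{\mathbb{P}_q}(\Sym_f^r,\sigma+s)$, a contour shift to $\mathrm{Re}(s)=\max\{\sigma_1-\sigma,-1\}$ picking up only the residue at $s=0$, and Lemma~\ref{lem:1} to bound the shifted integrals. You also correctly locate the one genuine subtlety, namely where the quadratic loss $(\sigma_1-\tfrac12)^{-2}$ comes from.

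Two details in your sketch need tightening. The inequality you invoke should read $\sigma_1-\tfrac12\le 1/\log y$ (immediate from $\sigma_1\le\tfrac12+1/\log y$), hence $\log y\le(\sigma_1-\tfrac12)^{-1}$; you wrote the reverse direction. More seriously, this absorption only works when the truncation height satisfies $\log T\ll\log y$. The paper accordingly takes $T=3y$, and first reduces to $y\in\mathbb{Z}+\tfrac12$ in order to control the Perron truncation error coming from $n$ near $y$. If you instead take $T$ a power of $q^m$ with the sharp kernel $y^w/w$, the vertical integral $\int|s|^{-1}\,|dt|$ produces a $\log T\asymp\log q^m$ that cannot be absorbed into $(\sigma_1-\tfrac12)^{-1}$ when $\sigma$ is bounded away from $\tfrac12$ and $y$ is small, leaving an extraneous factor of $\log q^m$. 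Your alternative smoothed kernel $y^w/(w(w+1))$ does avoid this (the resulting vertical integral converges independently of $T$ and is $\ll(\sigma_1-\tfrac12)^{-1}$) and is a legitimate variant of what the paper does, but the sharp-kernel route as you describe it really does require $T\asymp y$.
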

\begin{proof}
We assume at first $y \in \mathbb{Z}+\frac{1}{2}$ and $2 \leq y \ll q^{dm}$. 
Put
\[
\mathfrak{c}=\mathfrak{c}(\sigma, y)=\max\{1-\sigma, 0\}+\frac{1}{\log{y}}.
\]
Then clearly $\sigma+\mathfrak{c}>1$.
By Perron's formula \cite[Chapter 17]{Davenport2000}, we have 
\begin{align*}
&\frac{1}{2\pi i} \int_{\mathfrak{c}-3iy}^{\mathfrak{c}+3iy} \log{L}_{\mathbb{P}_q}(\Sym_f^r, \sigma+s) y^{s} \,\frac{ds}{s} \\
&=\sum_{n<y} \frac{\gamma(n; \Sym_f^r) \Lambda(n)}{n^{\sigma}\log{n}}
+O\left( \sum_{n=1}^{\infty} \frac{\Lambda(n)}{n^{\sigma +\mathfrak{c}}\log{n}} y^{\mathfrak{c}} 
\min \left\{1, \frac{1}{3y |\log{(y/n)}|} \right\} \right). 
\end{align*}
Note that $3y |\log{(y/n)}| \geq1$ for any $n$. 
(In fact, since $|\log(y/n)|$ attains the minimal value at $n=y+1/2$,
we have 
$3y|\log(y/n)|\geq 3y\log(1+(2y)^{-1})\geq 1$.)
Thus the above error term is
\begin{align}\label{eq:series}
\ll
y^{\max\{-\sigma, -1\}} \sum_{n=1}^{\infty} \frac{\Lambda(n)}{n^{\sigma+\mathfrak{c}}(\log n)|\log{(y/n)}|}. 
\end{align}
We evaluate \eqref{eq:series} by dividing the sum into three subsums.
For the terms for which $n \leq \frac{3}{4} y$ or $n \geq \frac{5}{4} y$ holds, 
we remark that $|\log{(y/n)}|$ has a uniform positive lower bound. 
Hence their contribution is 
\begin{align*}
&\ll y^{\max\{-\sigma,-1\}} \sum_{\substack{n\leq 3y/4 \\ n\geq 5y/4}} \frac{\Lambda(n)}{n^{\sigma+\mathfrak{c}}\log n}
\leq y^{\max\{-\sigma,-1\}} \sum_{n=1}^{\infty} \frac{1}{n^{\sigma+\mathfrak{c}}}\\
& =y^{\max\{-\sigma,-1\}}\zeta(\sigma+\mathfrak{c})
\ll y^{\max\{-\sigma, -1\}} \log y
\end{align*}
by using $\zeta(1+t)\ll t^{-1}$ for $t>0$. 
Next, we consider the terms for which $\frac{3}{4} y<n<\frac{5}{4} y$ holds. 
For $\frac{3}{4} y<n<\frac{5}{4} y$, we know
\begin{align*}
&
|\log{(y/n)}|
\\
=&
\begin{cases}
  \log{(y/n)} = -\log (1-(y-n)/y) \geq (y-n)/y & 1<y/n<4/3,
  \\
  \log{(n/y)} = -\log (1-(n-y)/n) \geq (n-y)/n & 1>y/n>4/5. 
\end{cases}
\end{align*}
Then we see that
\begin{align*}
&
  y^{\max\{-\sigma, -1\}} \sum_{3y/4 < n < 5y/4}\frac{\Lambda(n)}{n^{\sigma+\mathfrak{c}}(\log n)|\log(y/n)|}
\\
\ll&
y^{\max\{-\sigma, -1\}} \sum_{3y/4 < n <y} \frac{y\Lambda(n)}{n^{\sigma+\mathfrak{c}}(y-n)\log n}
\\
&+
y^{\max\{-\sigma, -1\}} \sum_{y < n < 5y/4} \frac{n\Lambda(n)}{n^{\sigma+\mathfrak{c}}(n-y)\log n}
\\
\ll&
y^{\max\{-\sigma, -1\}} \sum_{3y/4 < n <y} \frac{1}{y-n}
+
y^{\max\{-\sigma, -1\}} \sum_{y < n < 5y/4} \frac{1}{n-y}
\end{align*}
since $\sigma+\mathfrak{c}>1$. 
Recalling $y \in \mathbb{Z}+\frac{1}{2}$, we see that the right-hand side is
\begin{align*}
\leq&
y^{\max\{-\sigma, -1\}} \sum_{0 \leq \mu < y/4} \frac{1}{\mu+1/2}
+
y^{\max\{-\sigma, -1\}} \sum_{0 \leq \nu < y/4} \frac{1}{\nu+1/2}
\\
\ll&
y^{\max\{-\sigma, -1\}} \log{y}. 
\end{align*}
Collecting the above results, we obtain the formula
\begin{align}\label{eq:int1}
&\frac{1}{2\pi i} \int_{\mathfrak{c}-3iy}^{\mathfrak{c}+3iy} \log{L}_{\mathbb{P}_q}(\Sym_f^r, \sigma+s) y^{s} \,\frac{ds}{s}
\nonumber\\
&= \sum_{n<y} \frac{\gamma(n; \Sym_f^r) \Lambda(n)}{n^{\sigma}\log{n}} 
+ O\left( y^{\max\{-\sigma, -1\}} \log{y} \right). 
\end{align}
On the other hand, letting $\mathfrak{c}_1=\max\{\sigma_1-\sigma, -1\}$, we obtain
\begin{align*}
&\frac{1}{2\pi i} \int_{\mathfrak{c}-3iy}^{\mathfrak{c}+3iy} \log{L}_{\mathbb{P}_q}(\Sym_f^r, \sigma+s) y^{s} \,\frac{ds}{s} 
= \log{L}_{\mathbb{P}_q}(\Sym_f^r, \sigma) \\
&\qquad+ \frac{1}{2\pi i} \left(\int_{\mathfrak{c}-3iy}^{\mathfrak{c}_1-3iy} 
+ \int_{\mathfrak{c}_1-3iy}^{\mathfrak{c}_1+3iy} 
+ \int_{\mathfrak{c}_1+3iy}^{\mathfrak{c}+3iy} \right) 
\log{L}_{\mathbb{P}_q}(\Sym_f^r, \sigma+s) y^{s} \,\frac{ds}{s} 
\end{align*}
since we do not encounter any poles of the integrand while changing the 
integration contours, except for a simple pole at $s=0$ with residue $\log{L}_{\mathbb{P}_q}(\Sym_f^r, \sigma)$. 
Lastly, we apply Lemma \ref{lem:1} to estimate the integral terms on
the right-hand side.
Since 
$\sigma+\mathfrak{c}_1-1/2=\max\{\sigma_1-1/2,\sigma-3/2\}\geq \sigma_1-1/2$, 
the first and the third integrals are
\[
\ll\int_{\mathfrak{c}_1}^{\mathfrak{c}}\frac{\log q^m}{\sigma+u-1/2}
y^{u-1}du
\ll\frac{\log q^m}{\sigma+\mathfrak{c}_1-1/2}y^{\mathfrak{c}-1}
\ll \frac{\log q^m}{\sigma_1-1/2}y^{\max\{-\sigma,-1\}},
\]
while the second integral is
\[
\ll\int_{-3y}^{3y}\frac{\log q^m}{\sigma+\mathfrak{c}_1-1/2}y^{\mathfrak{c}_1}
\frac{|dv|}{|\mathfrak{c}_1+iv|}
\ll \frac{\log q^m}{\sigma_1-1/2}y^{\max\{\sigma_1-\sigma,-1\}}
\int_{-3y}^{3y}\frac{|dv|}{|\mathfrak{c}_1+iv|},
\]
and further
\[
\int_{-3y}^{3y}\frac{|dv|}{|\mathfrak{c}_1+iv|}
=\int_{|v|\leq 1}+\int_{1<|v|\leq 3y}
\ll \frac{1}{|\mathfrak{c}_1|}+\log y \ll \frac{1}{\sigma_1-1/2}
\]
because of the definition of $\sigma_1$.
Therefore we now arrive at
\begin{align}\label{eq:int2}
&\frac{1}{2\pi i} \int_{\mathfrak{c}-3iy}^{\mathfrak{c}+3iy} \log{L}_{\mathbb{P}_q}(\Sym_f^r, \sigma+s) y^{s} \,\frac{ds}{s}
\nonumber\\
&= \log{L}_{\mathbb{P}_q}(\Sym_f^r, \sigma)
+ O_{r,d}\left( \frac{\log{q^m}}{(\sigma_1-1/2)^2} y^{\max\{\sigma_1-\sigma, -1\}} \right). 
\end{align}
The desired formula follows from \eqref{eq:int1} and \eqref{eq:int2} in the case $y \in \mathbb{Z}+\frac{1}{2}$ and $2 \leq y \ll q^{dm}$. 
If $y \notin \mathbb{Z}+\frac{1}{2}$ and $2 \leq y \leq q^{dm}$, then we take the smallest real number $\widetilde{y} \in \mathbb{Z}+\frac{1}{2}$ greater than $y$. 
Since $y< \widetilde{y} <y+1$, we have 
\begin{gather*}
\sum_{n<\widetilde{y}} \frac{\gamma(n; \Sym_f^r) \Lambda(n)}{n^{\sigma}\log{n}} 
= \sum_{n<y} \frac{\gamma(n; \Sym_f^r) \Lambda(n)}{n^{\sigma}\log{n}} 
+ O\left(y^{-\sigma}\right) 
\end{gather*}
with an absolute implied constant, and 
\begin{gather*}
\frac{\log{q^m}}{(\sigma_1-1/2)^2} \widetilde{y}^{\max\{\sigma_1-\sigma, -1\}}
\leq \frac{\log{q^m}}{(\sigma_1-1/2)^2} y^{\max\{\sigma_1-\sigma, -1\}}. 
\end{gather*}
Therefore the result is true in this case, and we complete the proof.  
\end{proof}
%
%
Then, we proceed to the proof of formula \eqref{eq:1stMoment}. 
By Lemma \ref{lem:2}, we have 
\begin{align}\label{eq:1stMoment1}
&
\sideset{}{'}\sum_{f \in \mathscr{P}_k^\mathrm{E}(q^m)}
\Psi_1(\log{L}_{\mathbb{P}_q}(\Sym_f^r, \sigma)) \nonumber\\
&= c \sum_{n<y} \bigg(
\sideset{}{'}\sum_{f \in \mathscr{P}_k(q^m)}
\gamma(n; \Sym_f^r) \bigg) \frac{\Lambda(n)}{n^{\sigma} \log{n}}
+ E, 
\end{align}
where $E$ is evaluated as
\begin{align*}
E
&= -c \sum_{n<y} \bigg(
\sideset{}{'}\sum_{f \in \mathscr{P}_k^\mathrm{P}(q^m)}
\gamma(n; \Sym_f^r) \bigg) \frac{\Lambda(n)}{n^{\sigma} \log{n}} \\
&\qquad\quad
+ O_{r,d}\left( \frac{\log{q^m}}{(\sigma_1-1/2)^2} y^{\max\{\sigma_1-\sigma, -1\}} 
\sideset{}{'} \sum_{f \in S_k(q^m)}  1\right) \\
&\ll_{r, c} y^{1/2}
\sideset{}{'}\sum_{f \in \mathscr{P}_k^\mathrm{P}(q^m)}
1 
+ \frac{\log{q^m}}{(\sigma_1-1/2)^2} y^{\max\{\sigma_1-\sigma, -1\}} 
\sideset{}{'}\sum_{f \in \mathscr{P}_k(q^m)}
1
\end{align*}
by the inequality $|\gamma(n; \Sym_f^r)| \leq r+1$. 
Furthermore, by estimates \eqref{hol} and \eqref{P1}, we obtain
\begin{gather}\label{eq:1sterror}
E 
\ll_{r,c,d} y^{1/2} 
q^{-\Delta m} 
+ \frac{\log{q^m}}{(\sigma_1-1/2)^2} y^{\max\{\sigma_1-\sigma, -1\}}. 
\end{gather}
Denote by $U_{\ell}(x)$ the Chebyshev polynomial of the second kind of degree $\ell$; that is, 
$U_{\ell}(\cos\theta)=\sin((\ell+1)\theta)/\sin\theta$. 
Then $\{U_{\ell}(\cos \xi)\}_{\ell=0}^{\infty}$ is an orthonormal basis for $L^2([0, \pi])$ with respect to the Sato-Tate measure $d^{\mathrm{ST}} \xi$. 
Hence we obtain the identity
\begin{gather}\label{eq:Cheby}
\sum_{h=0}^{\rho} 2 \cos(j(r-2h) \xi) +\delta_{r, \mathrm{even}}
= \sum_{\ell=0}^{\infty} c(\ell; j, r) U_{\ell}(\cos \xi)
\end{gather}
for any $\xi \in [0, \pi]$, where 
\begin{align}\label{eq:c}
&c(\ell; j, r)
=\int_{0}^{\pi} \left( \sum_{h=0}^{\rho} 2 \cos(j(r-2h) \xi) + \delta_{r, \mathrm{even}} \right) 
U_{\ell}(\cos \xi) \,d^{\mathrm{ST}} \xi
\nonumber\\
=&\sum_{h=0}^{\rho}\int_0^{\pi}2\cos(j(r-2h)\xi)\frac{2}{\pi}
\sin((\ell+1)\xi)\sin\xi d\xi+\delta_{r, \mathrm{even}}\delta_{\ell,0}
\nonumber\\
=&\sum_{h=0}^{\rho}\frac{2}{\pi}\int_0^{\pi}\left(
\cos(j(r-2h)\xi)\cos(\ell\xi)
-\cos(j(r-2h)\xi)\cos((\ell+2)\xi)\right)d\xi
\nonumber\\
&+ \delta_{r, \mathrm{even}} \delta_{\ell,0}
\nonumber\\
=& \sum_{h=0}^{\rho} \frac{1}{\pi} \int_{0}^{\pi} \left\{ \cos((j(r-2h) - \ell)\xi) - \cos((j(r-2h)-\ell-2)\xi) \right\} \,d \xi
\nonumber\\
&+ \delta_{r, \mathrm{even}} \delta_{\ell,0}. 
\end{align}
From this we deduce $|c(\ell; j, r)| \leq r+1$ and $c(\ell; j, r)=0$ for $\ell>jr$. 
Recall that $\gamma(n; \Sym_f^r)$ is determined by \eqref{eq:gamma} for $n=p^j$ with $p \in \mathbb{P}_q$ and $j \geq1$. 
Thus by \eqref{eq:Cheby} we can write
\begin{gather*}
\gamma(p^j; \Sym_f^r)
= \sum_{\ell=0}^{jr} c(\ell; j, r) \lambda_f(p^\ell),
\end{gather*}
because by \eqref{euler} we have
$$
\lambda_f(p^\ell)=\sum_{h=0}^{\ell}\cos((\ell-2h)\theta_f(p))
=U_{\ell}(\cos\theta_f(p)).
$$
Then we obtain
\begin{gather*}
\sideset{}{'}\sum_{f \in \mathscr{P}_k(q^m)}
\gamma(p^j; \Sym_f^r)
= c(0; j, r) 
+ \sum_{\ell=1}^{jr} c(\ell; j, r) p^{\ell(k-1)/2} E(q^m)
\end{gather*}
by \eqref{P}.
Therefore the first term of the right-hand side of \eqref{eq:1stMoment1} is calculated as 
\begin{align*}
&c \mathop{ \sum_{p \in \mathbb{P}_q} \sum_{j=1}^{\infty} } \limits_{p^j <y} 
\bigg(
\sideset{}{'}\sum_{f \in \mathscr{P}_k(q^m)}
\gamma(p^j; \Sym_f^r) \bigg)
\frac{1}{j} p^{-j \sigma} \\
&= c \mathop{ \sum_{p \in \mathbb{P}_q} \sum_{j=1}^{\infty} } \limits_{p^j <y} 
\frac{c(0; j, r)}{j} p^{-j \sigma}
+ c \mathop{ \sum_{p \in \mathbb{P}_q} \sum_{j=1}^{\infty} } \limits_{p^j <y}
\sum_{\ell=1}^{jr} c(\ell; j, r) p^{\ell(k-1)/2} E(q^m) 
\frac{1}{j p^{j \sigma}} \\
&= S_1+S_2, 
\end{align*}
say. 
By using $|c(\ell; j, r)| \leq r+1$ and \eqref{E2}, we estimate $S_2$ as 
\begin{align}\label{eq:S2}
S_2 
&\ll_c
q^{-m-1/2} \sum_{p \in \mathbb{P}_q} \sum_{j<\log y/\log p}
\sum_{\ell=1}^{jr} p^{\ell (k-1)/2} \frac{1}{j} p^{-j \sigma}
\nonumber\\
&\ll
q^{-m-1/2} \sum_{p \in \mathbb{P}_q} \sum_{j<\log y/\log p}
\frac{1}{j}\bigg(\frac{p^{(k-1)r/2}}{p^{\sigma}}\bigg)^j
\nonumber\\
&\ll
q^{-m-1/2} \sum_{\substack{p \in \mathbb{P}_q\\ 1<\log y/\log p}} y^{(k-1)r/2}
\sum_{j<\log y/\log p}
\bigg(\frac{1}{p^{\sigma}}\bigg)^j
\nonumber\\
&\ll
q^{-m-1/2} \sum_{p< y} y^{(k-1)r/2}p^{-\sigma}
\nonumber\\
&\ll
q^{-m-1/2} y^{(k-1)r/2+1/2}.
\end{align}
To estimate $S_1$, we calculate $c(0; j, r)$ by using formula \eqref{eq:c}. 
We obtain
\begin{gather}\label{eq:c0}
c(0; j, r)
=
\begin{cases}
-1
& \text{$j=2$ and $r$ is odd}, 
\\
1
& \text{$j\neq1$ and $r$ is even},
\\
0
& \text{otherwise}.
\end{cases}
\end{gather}
Therefore we observe that, as a function in $r$, $c(0;j,r)$ is determined only by
the parity of $r$.

The above equation \eqref{eq:c0} especially gives $c(0; j, r)=0$ for $j=1$. 
Therefore we obtain
\begin{align}\label{eq:S1}
S_1
&= c \sum_{\substack{p \in \mathbb{P}_q\\ p \leq \sqrt{y} }} \sum_{2 \leq j<\log y/\log p} \frac{c(0; j, r)}{j p^{j \sigma}}
\nonumber\\
&= c \sum_{p \in \mathbb{P}_q} \sum_{j=2}^{\infty} \frac{c(0; j, r)}{j p^{j \sigma}}
+ O\bigg(
\sum_{\substack{p \in \mathbb{P}_q \\ p \leq \sqrt{y}}} \sum_{j\geq \log y/\log p}\frac{1}{j p^{j \sigma}}
+
\sum_{\substack{p \in \mathbb{P}_q \\ p > \sqrt{y}}} \sum_{j=2}^{\infty} \frac{1}{j p^{j \sigma}}
\bigg)
\nonumber\\
&= c \sum_{p \in \mathbb{P}_q} \sum_{j=2}^{\infty} \frac{c(0; j, r)}{j p^{j \sigma}}
+ O\bigg(\frac{1}{y^{\sigma-1/2}\log y}+\frac{\sigma}{ y^{\sigma-1/2}(\sigma-1/2)} \bigg). 
\end{align}
By \eqref{eq:S2} and \eqref{eq:S1}, we derive 
\begin{align*}
&c \mathop{ \sum_{p \in \mathbb{P}_q} \sum_{j=1}^{\infty} } \limits_{p^j <y} 
\bigg(
\sideset{}{'}\sum_{f \in \mathscr{P}_k(q^m)}
\gamma(p^j; \Sym_f^r) \bigg)
\frac{1}{j p^{j \sigma}} 
\\
=& c \sum_{p \in \mathbb{P}_q} \sum_{j=2}^{\infty} \frac{c(0; j, r)}{j p^{j \sigma}}
+ O\bigg(q^{-m-1/2} y^{r(k-1)/2 + 1/2}
+ \frac{y^{-\sigma + 1/2}}{\min\{(\sigma-1/2)/\sigma, \log y\}}\bigg).
\end{align*}
Therefore, formula \eqref{eq:1stMoment1} and error estimate \eqref{eq:1sterror} yield
\begin{align}\label{1stmomentfinal}
&
\sideset{}{'}\sum_{f \in \mathscr{P}_k^\mathrm{E}(q^m)}
\Psi_1(\log{L}_{\mathbb{P}_q}(\Sym_f^r, \sigma)) 
\nonumber
\\
=& c \sum_{p \in \mathbb{P}_q} \sum_{j=2}^{\infty} \frac{c(0; j, r)}{j p^{j \sigma}}
+ O\bigg(q^{-m-1/2} y^{r(k-1)/2 + 1/2}
+ y^{1/2} q^{-\Delta m}
\nonumber
\\
&+ \frac{1}{y^{\sigma-1/2}\min\{(\sigma-1/2)/\sigma, \log y\}}
+\frac{\log q^m}{(\sigma_1-1/2)^2}y^{\max\{\sigma_1-\sigma, -1\}}\bigg). 
\end{align}
Choose the parameter $y$ as $y=q^{\theta m}$ with 
$0<\theta< \min\{ 2/(r(k-1)+1), 2\Delta\}$. 
Then the error terms in \eqref{1stmomentfinal} tend to zero in the cases where $m$ tends to $\infty$ while $q$ is fixed (Case I), and $q$ tends to $\infty$ while $m$ is fixed (Case II). 
Hence we conclude that the limit formula
\begin{gather*}
\Avg \Psi_1(\log{L}_{\mathbb{P}_q}(\Sym_f^r, \sigma)) 
= c \sum_{p \in \mathbb{P}_q} \sum_{j=2}^{\infty} \frac{c(0; j, r)}{j p^{j \sigma}} 
\end{gather*}
is valid in Case I. 
On the other hand, the main term in Case II requires a different treatment. 
Since we have 
\begin{gather*}
c \sum_{p \in \mathbb{P}_q} \sum_{j=2}^{\infty} \frac{c(0; j, r)}{j p^{j \sigma}}
= c \sum_{p \in \mathbb{P}} \sum_{j=2}^{\infty} \frac{c(0; j, r)}{j p^{j \sigma}}
+ O\left(q^{-2 \sigma}\right)
\end{gather*}
for $q$ large enough, it is deduced from formula \eqref{1stmomentfinal} that
\begin{gather*}
\Avg \Psi_1(\log{L}_{\mathbb{P}_q}(\Sym_f^r, \sigma)) 
= c \sum_{p \in \mathbb{P}} \sum_{j=2}^{\infty} \frac{c(0; j, r)}{j p^{j \sigma}} 
\end{gather*}
if $q$ tends to $\infty$ while $m$ is fixed. 
As a result we obtain \eqref{eq:1stMoment} in both Cases I and II by inserting \eqref{eq:c0} to this formula. 
%
\section{Calculations of integrals involving $M$-functions}\label{sec-Mine2}
\par
In this section we assume $r=1$ or $2$.
The aim of this section is to show that
\begin{gather}\label{eq:intM}
\int_{\mathbb{R}} \mathcal{M}_\sigma(\Sym^{r}, u) \Psi_1(u) \,\frac{du}{\sqrt{2\pi}}
= c
\begin{cases}
-2^{-1} \sum_{p \in \mathbb{P}_*} p^{-2\sigma}
& r=1,
\\
\sum_{p \in \mathbb{P}_*} \sum_{j > 1} j^{-1} p^{-j \sigma}
& r=2 
\end{cases}
\end{gather}
holds for any $\sigma>1/2$ unconditionally.
Then, comparing \eqref{eq:1stMoment} and \eqref{eq:intM}, we immediately
obtain Theorem \ref{main2}.

In fact, most of this section is devoted to the proof of \eqref{eq:intM} in Case I, where the $M$-function $\mathcal{M}_\sigma(\Sym^{r}, u)$ depends on a fixed prime number $q$ and is denoted by $\mathcal{M}_{\sigma, \mathbb{P}_q}(\Sym^{r}, u)$ (see \eqref{eq:M}). 
Using the result in Case I, we finally prove \eqref{eq:intM} in Case II at the end of this section. 

We first prepare certain propositions which are necessary to verify an
exchange procedure of a limit and an integral appearing in the course of
the proof.    Some ideas in the following proof is inspired by
\cite[Lemma A in Section 5]{im-moscow}.

\begin{prop}\label{prop:}
Let $r=1$ or $2$ and let $q$ be a fixed prime number. 
Suppose that $\phi_0$ is a continuous non-decreasing function on $[0, \infty)$ satisfying $\phi_0(t) >0$, $\lim_{t \to\infty} \phi_0(t) = \infty$, and for $\sigma>1/2$
\begin{gather}\label{eq:cond1} 
\sup_{y \geq 2} 
\int_{\mathbb{R}} \mathcal{M}_{\sigma, \mathcal{P}_q(y)}(\Sym^r, u) \phi_0(|u|)^2 \frac{du}{\sqrt{2\pi}}
< \infty. 
\end{gather}
Then we have the formula
\begin{gather}\label{eq:limformula}
\lim_{y \to\infty} 
\int_{\mathbb{R}} \mathcal{M}_{\sigma, \mathcal{P}_q(y)}(\Sym^r, u) \Psi(u) \frac{du}{\sqrt{2\pi}}
= \int_{\mathbb{R}} \mathcal{M}_{\sigma, \mathbb{P}_q}(\Sym^r, u) \Psi(u) \frac{du}{\sqrt{2\pi}}
\end{gather}
for any continuous function $\Psi$ on $\mathbb{R}$ such that $|\Psi(u)| \leq \phi_0(|u|)$, where the limit is understood in the sense of Case I 
in \eqref{cases-I-II}. 
\end{prop}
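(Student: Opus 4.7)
The plan is to prove \eqref{eq:limformula} by a standard truncation argument: split each integral over $\mathbb{R}$ into a compact piece $|u|\leq T$ and a tail $|u|>T$, handle the compact piece by the uniform convergence provided by Proposition~\ref{M}(1), and control both tails simultaneously by combining the monotonicity of $\phi_0$ with hypothesis \eqref{eq:cond1}. Because $\mathcal{M}_{\sigma,\mathcal{P}_q(y)}(\Sym^r,u)\ge 0$, the tail step reduces to a Chebyshev-type inequality.

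For the tail, the key observation is that for $|u|>T$ the monotonicity gives $\phi_0(|u|)\ge\phi_0(T)>0$, so
\[
\int_{|u|>T}\mathcal{M}_{\sigma,\mathcal{P}_q(y)}(\Sym^r,u)\,|\Psi(u)|\,\frac{du}{\sqrt{2\pi}}
\le \int_{|u|>T}\mathcal{M}_{\sigma,\mathcal{P}_q(y)}(\Sym^r,u)\,\phi_0(|u|)\,\frac{du}{\sqrt{2\pi}}
\le \frac{C}{\phi_0(T)},
\]
where $C$ is the finite supremum in \eqref{eq:cond1}. Since $\phi_0(T)\to\infty$, this tail bound is uniform in $y\ge 2$ and can be made smaller than any prescribed $\varepsilon>0$ by choosing $T$ large. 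To obtain the same tail bound with $\mathcal{M}_{\sigma,\mathbb{P}_q}$ in place of $\mathcal{M}_{\sigma,\mathcal{P}_q(y)}$, I would invoke Proposition~\ref{M}(1),(2), which give pointwise convergence and non-negativity, and then apply Fatou's lemma to the non-negative integrands $\phi_0(|u|)^{2}\mathcal{M}_{\sigma,\mathcal{P}_q(y)}(\Sym^r,u)$ to conclude that $\int_{\mathbb{R}}\mathcal{M}_{\sigma,\mathbb{P}_q}(\Sym^r,u)\phi_0(|u|)^{2}\,du/\sqrt{2\pi}\le C$; the same Chebyshev estimate then applies.

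For the compact piece, continuity of $\Psi$ makes it bounded on $[-T,T]$, while Proposition~\ref{M}(1) gives
$\sup_{u}|\mathcal{M}_{\sigma,\mathcal{P}_q(y)}(\Sym^r,u)-\mathcal{M}_{\sigma,\mathbb{P}_q}(\Sym^r,u)|\to 0$ as $y\to\infty$ (in the sense of Case~I of \eqref{cases-I-II}). Hence
\[
\Bigl|\int_{|u|\leq T}\bigl(\mathcal{M}_{\sigma,\mathcal{P}_q(y)}-\mathcal{M}_{\sigma,\mathbb{P}_q}\bigr)\Psi\,\frac{du}{\sqrt{2\pi}}\Bigr|
\;\ll\; T\cdot\Bigl(\sup_{|u|\leq T}|\Psi(u)|\Bigr)\cdot\sup_{u}\bigl|\mathcal{M}_{\sigma,\mathcal{P}_q(y)}-\mathcal{M}_{\sigma,\mathbb{P}_q}\bigr|
\;\longrightarrow\;0.
\]
Combining the compact-region estimate with the two tail estimates yields \eqref{eq:limformula} after letting $y\to\infty$ and then $T\to\infty$. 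The only nontrivial point in the argument is transferring the uniform moment bound \eqref{eq:cond1} to the limiting density, and this is precisely what Fatou's lemma accomplishes once Proposition~\ref{M}(1) is in hand; no delicate harmonic-analytic input beyond what is already available in Section~\ref{sec3} is needed.
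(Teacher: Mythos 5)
Your argument is correct, but it takes a genuinely different route from the paper's. The paper first establishes the bounded-$\Psi$ case by viewing $\mathcal{M}_{\sigma,\mathcal{P}_q(y)}$ and $\mathcal{M}_{\sigma,\mathbb{P}_q}$ as probability densities, passing to characteristic functions $\widetilde{\mathcal{M}}$, and invoking L\'evy's continuity theorem to get weak convergence of measures; it then handles unbounded $\Psi$ by extracting a convergent subsequence of $I_y(\Psi)$ (justified via Cauchy--Schwarz and \eqref{eq:cond1}) and comparing $\Psi$ to a smooth cutoff $\Psi_R(u)=\Psi(u)E(u/R)$, using exactly the estimate $|\Psi-\Psi_R|\le\phi_0(|u|)^2/\phi_0(R)$ and Fatou for the limiting density. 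You instead bypass L\'evy's theorem entirely: you use the uniform-in-$u$ convergence of the densities from Proposition~\ref{M}(1) to control the compact region $|u|\le T$, and a sharp (rather than smooth) truncation combined with the monotonicity inequality $\phi_0(|u|)\le\phi_0(|u|)^2/\phi_0(T)$ for the tails, with the same Fatou step to transfer \eqref{eq:cond1} to $\mathcal{M}_{\sigma,\mathbb{P}_q}$. Your route is shorter and avoids the subsequence extraction, but leans more heavily on the uniformity in Proposition~\ref{M}(1); the paper's route via characteristic functions is more robust in settings where one only has pointwise (or weak) convergence of the densities. One small presentational point: when you ``let $y\to\infty$ and then $T\to\infty$,'' the logic is really the reverse---fix $\varepsilon$, choose $T$ so that $2C/\phi_0(T)<\varepsilon/2$, and then take $y$ large enough for the compact piece; but this is easily repaired and does not affect the substance.
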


\begin{proof}
First, we show that \eqref{eq:limformula} holds for any bounded continuous function $\Psi$. 
Define two Borel measures on $\mathbb{R}$ by
\begin{align*}
P_y (A)
&= \int_{A} \mathcal{M}_{\sigma, \mathcal{P}_q(y)}(\Sym^r, u) \frac{du}{\sqrt{2\pi}}, 
\\
Q (A)
&= \int_{A} \mathcal{M}_{\sigma, \mathbb{P}_q}(\Sym^r, u) \frac{du}{\sqrt{2\pi}}. 
\end{align*}
They are probability measures by Propositions \ref{M_P} and \ref{M}. 
Furthermore, the characteristic function of $P_y$ is calculated as
\begin{align*}
\widehat{P}_y (x)
&:= \int_{\mathbb{R}} \psi_x(u) P_y(du)
= \int_{\mathbb{R}} \mathcal{M}_{\sigma, \mathcal{P}_q(y)}(\Sym^r, u) \psi_x(u) \frac{du}{\sqrt{2\pi}} 
\\
&= \widetilde{\mathcal{M}}_{\sigma, \mathcal{P}_q(y)}(\Sym^r, x). 
\end{align*}
In a similar way, we have 
$\widehat{Q} (x)
= \widetilde{\mathcal{M}}_{\sigma, \mathbb{P}_q}(\Sym^r, x)$. 
Recall that $\widetilde{\mathcal{M}}_{\sigma, \mathcal{P}_q(y)}(\Sym^r, x)$ converges to $\widetilde{\mathcal{M}}_{\sigma, \mathbb{P}_q}(\Sym^r, x)$ as $y \to\infty$ uniformly in $|x| \leq a$ for any $a >0$ (Lemma \ref{lem-abc}). 
Therefore L{\'e}vy's continuity theorem asserts that $P_y$ converges 
weakly to $Q$, that is, 
\begin{gather*}
\lim_{y \to\infty} 
\int_{\mathbb{R}} \Psi(u) P_y(du)
= \int_{\mathbb{R}} \Psi(u) Q(du)
\end{gather*}
for any bounded continuous function $\Psi$ on $\mathbb{R}$. 
This is nothing but formula \eqref{eq:limformula}. 

Next, we consider the general case. 
Take a continuous function $\Psi$ satisfying $|\Psi(u)| \leq \phi_0(|u|)$. 
Then the integral
\begin{gather*}
I_y (\Psi)
= \int_{\mathbb{R}} \mathcal{M}_{\sigma, \mathcal{P}_q(y)}(\Sym^r, u) \Psi(u) \frac{du}{\sqrt{2\pi}}
\end{gather*}
is bounded for $y \geq2$. 
Indeed, we have 
\begin{gather*}
|I_y (\Psi)| 
\leq \left(\int_{\mathbb{R}} \mathcal{M}_{\sigma, \mathcal{P}_q(y)}(\Sym^r, u) \phi_0(|u|)^2 \frac{du}{\sqrt{2\pi}}\right)^{1/2}
\end{gather*}
by the Cauchy-Schwarz inequality, and condition \eqref{eq:cond1} yields that the right-hand side is bounded for $y \geq2$. 
Hence there exists a sequence $\{y_\nu\}$ for which $I_{y_\nu} (\Psi)$ converges to a limit value $I (\Psi)$ as $\nu \to\infty$. 
If one can show that 
\begin{gather}\label{eq:I}
I (\Psi)
= \int_{\mathbb{R}} \mathcal{M}_{\sigma, \mathbb{P}_q}(\Sym^r, u) \Psi(u) \frac{du}{\sqrt{2\pi}}
\end{gather}
is valid for any choice of $\{y_\nu\}$, then limit formula \eqref{eq:limformula} holds. 
To show \eqref{eq:I}, we modify $\Psi$ as follows. 
Let $E$ be any compactly supported continuous function on $\mathbb{R}$ such that $0 \leq E(u) \leq 1$ everywhere, and $E(u) =1$ for $|u| \leq1$. 
Then we put 
\begin{gather*}
\Psi_R(u) 
= \Psi(u) E \left(\frac{u}{R}\right)
\end{gather*}
with $R>0$. 
This is a bounded continuous function on $\mathbb{R}$, and the inequality
\begin{gather*}
|\Psi(u)-\Psi_R(u)|
\leq |\Psi(u)| (1-\mathbf{1}_{[-R,R]}(u)) 
\end{gather*}
holds, where $\mathbf{1}_{[-R,R]}$ is the indicator function of the interval $[-R,R]$. 
Furthermore, we have 
\begin{gather*}
(1-\mathbf{1}_{[-R,R]}(u)) \phi_0(R)
\leq \phi_0(|u|)
\end{gather*}
for any $u \in \mathbb{R}$ since $\phi_0$ is non-decreasing. 
Thus we evaluate the difference between $\Psi(u)$ and $\Psi_R(u)$ as 
\begin{gather}\label{eq:Phi}
|\Psi(u)-\Psi_R(u)|
\leq |\Psi(u)| \frac{\phi_0(|u|)}{\phi_0(R)} 
\leq \frac{\phi_0(|u|)^2}{\phi_0(R)}. 
\end{gather}
Therefore, we obtain for any $y \geq2$ the upper bound
\begin{align}\label{eq:I_y}
|I_y (\Psi) - I_y (\Psi_R)|
&\leq \int_{\mathbb{R}} \mathcal{M}_{\sigma, \mathcal{P}_q(y)}(\Sym^r, u) |\Psi(u)-\Psi_R(u)| \frac{du}{\sqrt{2\pi}} 
\nonumber \\
&\leq \frac{1}{\phi_0(R)} \int_{\mathbb{R}} \mathcal{M}_{\sigma, \mathcal{P}_q(y)}(\Sym^r, u) 
\phi_0(|u|)^2 \frac{du}{\sqrt{2\pi}} 
\nonumber \\
&\ll \frac{1}{\phi_0(R)}
\end{align}
by condition \eqref{eq:cond1}. 
Note that $I_{y_\nu} (\Psi) \to I (\Psi)$ as $\nu \to\infty$. 
Since we have checked that \eqref{eq:limformula} is true for bounded continuous functions, we also obtain
\begin{gather*}
\lim_{\nu \to\infty} 
I_{y_\nu} (\Psi_R)
= \int_{\mathbb{R}} \mathcal{M}_{\sigma, \mathbb{P}_q}(\Sym^r, u) \Psi_R(u) \frac{du}{\sqrt{2\pi}}. 
\end{gather*}
Hence \eqref{eq:I_y} yields 
\begin{gather}\label{eq:uppBD}
\left|I (\Psi) - \int_{\mathbb{R}} \mathcal{M}_{\sigma, \mathbb{P}_q}(\Sym^r, u) \Psi_R(u) \frac{du}{\sqrt{2\pi}} \right|
\ll \frac{1}{\phi_0(R)}. 
\end{gather}
On the other hand, we again apply \eqref{eq:Phi} to deduce
\begin{align}\label{tuikatuika}
&\left|\int_{\mathbb{R}} \mathcal{M}_{\sigma, \mathbb{P}_q}(\Sym^r, u) \Psi_R(u) \frac{du}{\sqrt{2\pi}}
- \int_{\mathbb{R}} \mathcal{M}_{\sigma, \mathbb{P}_q}(\Sym^r, u) \Psi(u) \frac{du}{\sqrt{2\pi}} \right|
\notag\\
&\leq \frac{1}{\phi_0(R)} 
\int_{\mathbb{R}} \mathcal{M}_{\sigma, \mathbb{P}_q}(\Sym^r, u) \phi_0(|u|)^2 \frac{du}{\sqrt{2\pi}}. 
\end{align}
Recall that $\mathcal{M}_{\sigma, \mathcal{P}_q(y)}(\Sym^r, u)$ converges to $\mathcal{M}_{\sigma, \mathbb{P}_q}(\Sym^r, u)$ as $y \to\infty$ for any $u \in \mathbb{R}$. 
Thus, by Fatou's lemma and condition \eqref{eq:cond1}, we see that
\begin{align*}
\int_{\mathbb{R}} \mathcal{M}_{\sigma, \mathbb{P}_q}(\Sym^r, u) \phi_0(|u|)^2 \frac{du}{\sqrt{2\pi}}
&\leq \liminf_{y \to\infty} 
\int_{\mathbb{R}} \mathcal{M}_{\sigma, \mathcal{P}_q(y)}(\Sym^r, u) \phi_0(|u|)^2 \frac{du}{\sqrt{2\pi}}
\\
&\ll 1. 
\end{align*}
Hence from \eqref{tuikatuika} we obtain
\begin{gather*}
\lim_{R \to\infty} \int_{\mathbb{R}} \mathcal{M}_{\sigma, \mathbb{P}_q}(\Sym^r, u) \Psi_R(u) \frac{du}{\sqrt{2\pi}}
= \int_{\mathbb{R}} \mathcal{M}_{\sigma, \mathbb{P}_q}(\Sym^r, u) \Psi(u) \frac{du}{\sqrt{2\pi}} 
\end{gather*}
due to $\lim_{R \to\infty} \phi_0(R) = \infty$. 
Therefore, letting $R \to\infty$ in \eqref{eq:uppBD}, we see that equality \eqref{eq:I} is satisfied. 
This completes the proof of the desired result. 
\end{proof}


\begin{prop}\label{propprop}
For any $\sigma>1/2$, \eqref{eq:cond1} holds with $\phi_0(t)=e^{At}$
for any positive constant $A$.
\end{prop}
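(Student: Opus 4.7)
The plan is to apply Proposition \ref{M_P} with the continuous test function $\Psi(u) = e^{2A|u|}$ to rewrite the integral in \eqref{eq:cond1} as
\[
\int_{\Theta_{\mathcal{P}_q(y)}} \exp\!\bigl(2A \bigl|\mathscr{G}_{\sigma, \mathcal{P}_q(y)}(e^{i\theta_{\mathcal{P}_q(y)}r})\bigr|\bigr) \,d^{\rm ST} \theta_{\mathcal{P}_q(y)},
\]
and to bound this quantity uniformly in $y$. I would first apply the pointwise inequality $e^{2A|X|} \leq e^{2AX} + e^{-2AX}$ to the \emph{total} sum $X = \sum_{p \in \mathcal{P}_q(y)} \mathscr{G}_{\sigma, p}(e^{ir\theta_p})$, not to each summand separately. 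Since $d^{\rm ST}\theta_{\mathcal{P}_q(y)}$ is a product measure, this reduces matters to bounding each of the two products
\[
\prod_{p \in \mathcal{P}_q(y)} \int_0^\pi \exp\!\bigl(\pm 2A\,\mathscr{G}_{\sigma, p}(e^{ir\theta_p})\bigr) \,\frac{2 \sin^2 \theta_p}{\pi}\, d\theta_p.
\]

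The core step will be to show that each local factor equals $1 + O_A(p^{-2\sigma})$ for every sufficiently large $p$. The uniform bound $|\mathscr{G}_{\sigma, p}(e^{ir\theta})| \ll p^{-\sigma}$ of \eqref{wp_00} yields the Taylor expansion
\[
e^{\pm 2A \mathscr{G}_{\sigma, p}(e^{ir\theta})} = 1 \pm 2A\,\mathscr{G}_{\sigma, p}(e^{ir\theta}) + O_A(p^{-2\sigma})
\]
uniformly in $\theta$ once $p$ is large. Integrating this expansion against $d^{\rm ST}\theta_p$ and invoking \eqref{wp_0}, which tells us that $\int_0^\pi \mathscr{G}_{\sigma, p}(e^{ir\theta})\, d^{\rm ST}\theta_p \ll p^{-2\sigma}$, shows that the linear term already vanishes to the required order, leaving each local factor equal to $1 + O_A(p^{-2\sigma})$.

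For the finitely many remaining primes the local integral is a finite positive constant, because $\mathscr{G}_{\sigma, p}(e^{ir\theta})$ is continuous and bounded on $[0, \pi]$ whenever $\sigma > 1/2$ (note $|1 - e^{ir\theta}p^{-\sigma}| \geq 1 - p^{-\sigma} > 0$). Since $\sigma > 1/2$ implies $\sum_p p^{-2\sigma} < \infty$, the resulting infinite product converges, producing a bound independent of $y$ and establishing \eqref{eq:cond1}. The main obstacle lies in extracting this cancellation: a naive estimate such as $e^{2A|\mathscr{G}_{\sigma, p}|} \leq e^{2ACp^{-\sigma}}$ would give only local factors of the form $1 + O_A(p^{-\sigma})$, whose product over primes diverges for $\sigma \leq 1$. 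Keeping the absolute value on the \emph{total} sum and splitting via $e^{2A|X|} \leq e^{2AX} + e^{-2AX}$ is what allows us to exploit the mean-zero phenomenon built into the modified Sato-Tate measure, precisely the same mechanism that underlies \eqref{1+error} in the proof of Lemma \ref{lem-abc}.
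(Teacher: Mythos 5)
Your proposal is correct and tracks the paper's own proof essentially step by step. The paper likewise begins with the inequality $e^{2A|u|}\leq e^{2Au}+e^{-2Au}$ (splitting the integral into two pieces $I^{+}(y)$ and $I^{-}(y)$), then uses the product structure of the modified Sato--Tate measure together with \eqref{wp_00} for the Taylor expansion and the crucial cancellation \eqref{wp_0} to obtain local factors $1+O_A(p^{-2\sigma})$, whose product over primes converges because $\sigma>1/2$.
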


\begin{proof}
Put $\phi_0^+(u) = e^{2Au}$ and $\phi_0^-(u) = e^{-2Au}$. 
Then we have 
\begin{gather*}
\phi_0(|u|)^2 \leq \phi_0^+(u) + \phi_0^-(u), 
\end{gather*}
which implies
\begin{align*}
&\int_{\mathbb{R}} \mathcal{M}_{\sigma, \mathcal{P}_q(y)}(\Sym^r, u) \phi_0(|u|)^2 \frac{du}{\sqrt{2\pi}}
\\
&\leq \int_{\mathbb{R}} \mathcal{M}_{\sigma, \mathcal{P}_q(y)}(\Sym^r, u) \phi_0^+(u) \frac{du}{\sqrt{2\pi}}
+ \int_{\mathbb{R}} \mathcal{M}_{\sigma, \mathcal{P}_q(y)}(\Sym^r, u) \phi_0^-(u) \frac{du}{\sqrt{2\pi}}\\
&= I^+(y)+I^-(y), 
\end{align*}
say.
By Proposition \ref{M_P}, we have 
\begin{gather*}
I^+(y)
= \int_{\Theta_{\mathcal{P}_q(y)}} \exp\left( 2A \mathscr{G}_{\sigma, \mathcal{P}_q(y)} (e^{i \theta_{\mathcal{P}_q} r}) \right)
d^{\mathrm{ST}} \theta_{\mathcal{P}_q(y)},
\end{gather*}
and this is further
\begin{align}\label{eq:plus}
&= \int_{\Theta_{\mathcal{P}_q(y)}} \prod_{p \in \mathcal{P}_q(y)} 
\exp\left( 2A \mathscr{G}_{\sigma, p} (e^{i \theta_{\mathcal{P}_q} r}) \right)
d^{\mathrm{ST}} \theta_{\mathcal{P}_q(y)} 
\nonumber \\
&= \prod_{p \in \mathcal{P}_q(y)} 
\int_{0}^{\pi} \exp\left( 2A \mathscr{G}_{\sigma, p} (e^{i \theta_p r}) \right) d^{\mathrm{ST}} \theta_p 
\end{align}
by the definitions of the function $\mathscr{G}_{\sigma, \mathcal{P}_q(y)}$ and the Sato-Tate measure $d^{\mathrm{ST}} \theta_{\mathcal{P}_q(y)}$. 
The Taylor expansion of $\exp(z)$ gives the asymptotic formula
\begin{gather*}
\exp\left( 2A \mathscr{G}_{\sigma, p} (e^{i \theta_p r}) \right)
= 1 + 2A \mathscr{G}_{\sigma, p} (e^{i \theta_p r}) +O_{A}\left(p^{-2\sigma}\right)
\end{gather*}
since $\mathscr{G}_{\sigma, p} (e^{i \theta_p r}) \ll p^{-\sigma}$ is satisfied for any $\theta_p \in [0, \pi]$. 
Therefore
\begin{gather*}
\int_{0}^{\pi} \exp\left( 2A \mathscr{G}_{\sigma, p} (e^{i \theta_p r}) \right) d^{\mathrm{ST}} \theta_p 
= 1 + 2A \int_{0}^{\pi} \mathscr{G}_{\sigma, p} (e^{i \theta_p r}) d^{\mathrm{ST}} \theta_p 
+O_{A}\left(p^{-2\sigma}\right). 
\end{gather*}
Since the integral on the right-hand side is $O(p^{-2\sigma})$ by
\eqref{wp_0},
the infinite product 
\begin{gather*}
\prod_{p \in \mathbb{P}_q} 
\int_{0}^{\pi} \exp\left( 2A \mathscr{G}_{\sigma, p} (e^{i \theta_p r}) \right) d^{\mathrm{ST}} \theta_p 
\end{gather*}
converges absolutely. 
By \eqref{eq:plus}, we conclude that the integral
$I^+(y)$
is bounded for $y \geq2$. 
We can prove in the same line that
\begin{gather*}
I^-(y)
= \int_{\Theta_{\mathcal{P}_q(y)}} \exp\left( -2A \mathscr{G}_{\sigma, \mathcal{P}_q(y)} (e^{i \theta_{\mathcal{P}_q} r}) \right)
d^{\mathrm{ST}} \theta_{\mathcal{P}_q(y)}
\end{gather*}
is also bounded. 
Therefore we obtain
\begin{gather*}
\sup_{y \geq2} 
\int_{\mathbb{R}} \mathcal{M}_{\sigma, \mathcal{P}_q(y)}(\Sym^r, u) \phi_0(|u|)^2 \frac{du}{\sqrt{2\pi}}
\leq \sup_{y \geq2} I^+(y)
+ \sup_{y \geq2} I^-(y)
< \infty
\end{gather*}
as desired. 
\end{proof}

In particular, in view of Proposition \ref{propprop},
we may choose $\Psi(u)=\Psi_1(u)=cu$ in Proposition 
\ref{prop:}, which yields

\begin{cor}\label{cor:}
Let $r=1$ or $2$.
For any $\sigma>1/2$, we have 
\begin{gather}\label{cor:-formula}
\lim_{y \to\infty} 
\int_{\mathbb{R}} \mathcal{M}_{\sigma, \mathcal{P}_q(y)}(\Sym^r, u) \Psi_1(u) \frac{du}{\sqrt{2\pi}}
= \int_{\mathbb{R}} \mathcal{M}_{\sigma, \mathbb{P}_q}(\Sym^r, u) \Psi_1(u) \frac{du}{\sqrt{2\pi}},
\end{gather}
where the limit is understood in the sense of Case I in \eqref{cases-I-II}.
\end{cor}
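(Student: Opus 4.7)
The plan is to derive Corollary~\ref{cor:} as a direct consequence of Proposition~\ref{prop:} applied to $\Psi = \Psi_1$, with the hypothesis \eqref{eq:cond1} verified by an appeal to Proposition~\ref{propprop}. No new ingredients are required; the corollary is essentially the intersection of the two preceding propositions, which were designed precisely for this purpose.

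The only concrete step is to exhibit a dominating function $\phi_0$ meeting the standing hypotheses of Proposition~\ref{prop:}. I would take $\phi_0(t) = |c|\,e^{t}$, which is continuous, strictly positive, non-decreasing on $[0,\infty)$, and satisfies $\lim_{t\to\infty}\phi_0(t) = \infty$. Using the elementary inequality $t \leq e^{t}$ for $t \geq 0$, I obtain $|\Psi_1(u)| = |c|\,|u| \leq |c|\,e^{|u|} = \phi_0(|u|)$ for every $u \in \mathbb{R}$, so the pointwise majorization demanded by Proposition~\ref{prop:} is in place. Continuity of $\Psi_1$ is immediate.

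Next I would check condition~\eqref{eq:cond1} for this choice of $\phi_0$. Since $\phi_0(|u|)^{2} = c^{2} e^{2|u|}$, the required supremum in $y \geq 2$ of $\int_{\mathbb{R}} \mathcal{M}_{\sigma, \mathcal{P}_q(y)}(\Sym^{r}, u)\, \phi_0(|u|)^{2}\, du/\sqrt{2\pi}$ equals $c^{2}$ times the supremum considered in Proposition~\ref{propprop} with the choice $A = 1$, and is therefore finite. With both hypotheses of Proposition~\ref{prop:} thus verified, its conclusion~\eqref{eq:limformula} specializes to exactly~\eqref{cor:-formula}, and the corollary follows. There is no substantive obstacle at this stage; the heavy lifting, namely the weak-convergence step via L\'{e}vy's continuity theorem and the uniform exponential-moment bound, has already been carried out in Propositions~\ref{prop:} and~\ref{propprop} respectively.
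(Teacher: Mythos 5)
Your proposal is correct and is essentially the same argument the paper uses: the corollary is stated as an immediate consequence of Proposition~\ref{prop:} applied with $\Psi=\Psi_1$, with the moment hypothesis~\eqref{eq:cond1} supplied by Proposition~\ref{propprop}. Your explicit choice $\phi_0(t)=|c|e^t$ (and the bookkeeping that $\phi_0(|u|)^2=c^2e^{2|u|}$ reduces to the $A=1$ case of Proposition~\ref{propprop}) just writes out the short verification the paper leaves implicit.
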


Now we prove \eqref{eq:intM} in Case I.
In view of Corollary \ref{cor:}, it is enough to calculate the
left-hand side of \eqref{cor:-formula}.

Let $\mathcal{P}_q$ be any finite subset of $\mathbb{P}_q$.
From Proposition~\ref{M_P},
we have 
\begin{align}\label{eq:intM1}
\int_{\mathbb{R}} \mathcal{M}_{\sigma, \mathcal{P}_q}(\Sym^{r}, u) \Psi_1(u) \,\frac{du}{\sqrt{2\pi}}
&= \int_{\Theta_{\mathcal{P}_q}} \Psi_1 \left( \mathscr{G}_{\sigma, \mathcal{P}_q} (e^{i \theta_{\mathcal{P}_q} r}) \right) 
\,d^{\mathrm{ST}} \theta_{\mathcal{P}_q} \notag\\
&= c \sum_{p \in \mathcal{P}_q} 
\int_{\Theta_{\mathcal{P}_q}} \mathscr{G}_{\sigma, p} (e^{i \theta_p r}) 
\,d^{\mathrm{ST}} \theta_{\mathcal{P}_q} \nonumber\\
&= c \sum_{p \in \mathcal{P}_q} 
\int_{0}^{\pi} \mathscr{G}_{\sigma, p} (e^{i \theta_p r}) \,d^{\mathrm{ST}} \theta_p 
\end{align}
by the definition of the measure $d^{\mathrm{ST}} \theta_{\mathcal{P}_q}$. 
For any $p \in \mathcal{P}_q$, we have
\begin{align*}
&
\int_{0}^{\pi} \mathscr{G}_{\sigma, p} (e^{i \theta_p r}) \,d^{\mathrm{ST}} \theta_p
\\
&= \int_{0}^{\pi} g_{\sigma, p} (e^{i \theta_p r}) \,d^{\mathrm{ST}} \theta_p 
+ \int_{0}^{\pi} g_{\sigma, p} (e^{i \theta_p r}) \,d^{\mathrm{ST}} \theta_p 
+ g_{\sigma,p}(\delta_{r, \mathrm{even}}) \\
&= \sum_{j=1}^{\infty} \left( \int_{0}^{\pi} (2 \cos(\theta_p jr) + \delta_{r, \mathrm{even}}) \,d^{\mathrm{ST}} \theta_p \right)
\frac{1}{j p^{j \sigma}}. 
\end{align*}
By \eqref{eq:Cheby} with $\rho=0$, the identity
\begin{gather*}
2 \cos(\theta_p jr) + \delta_{r, \mathrm{even}}
= \sum_{\ell=0}^{\infty} c(\ell; j, r) U_{\ell}(\cos \theta_p)
\end{gather*}
holds for any $\theta_p \in [0, \pi]$. 
Since $\{U_{\ell}(\cos \xi)\}_{\ell=0}^{\infty}$ is an orthonormal basis for $L^2([0, \pi])$ with respect to $d^{\mathrm{ST}} \theta_p$ and $U_0(x)=1$, we obtain
\begin{gather*}
\int_{0}^{\pi} (2 \cos(\theta_p jr) + \delta_{r, \mathrm{even}}) \,d^{\mathrm{ST}} \theta_p
= c(0; j,r), 
\end{gather*}
which implies
\begin{gather}\label{eq:int_p}
\int_{0}^{\pi} \mathscr{G}_{\sigma, p} (e^{i \theta_p r}) \,d^{\mathrm{ST}} \theta_p 
= \sum_{j=1}^{\infty} \frac{c(0; j, r)}{j p^{j \sigma}}. 
\end{gather}
Inserting this into \eqref{eq:intM1} with $\mathcal{P}_q=\mathcal{P}_q(y)$, we derive
\begin{gather*}
\int_{\mathbb{R}} \mathcal{M}_{\sigma, \mathcal{P}_q(y)}(\Sym^r, u) \Psi_1(u) \,\frac{du}{\sqrt{2\pi}}
= c \sum_{p \in \mathcal{P}_q(y)} \sum_{j=1}^{\infty} \frac{c(0; j, r)}{j p^{j \sigma}} 
\end{gather*}
Then we obtain \eqref{eq:intM} in Case I by letting $y \to\infty$ and using \eqref{eq:c0}.

Finally, we confirm that \eqref{eq:intM} remains valid in Case II. 
The $M$-function in this case is denoted by $\mathcal{M}_{\sigma, \mathbb{P}}(\Sym^{r}, u)$ (see \eqref{eq:M}). 
Let $q$ be a fixed prime number. 
Then we will show the following identity
\begin{gather}\label{eq:M_Case_I_II}
\mathcal{M}_{\sigma, \mathbb{P}}(\Sym^{r}, u)
= \int_{\mathbb{R}}
\mathcal{M}_{\sigma, \mathbb{P}_{q}}(\Sym^r, u')
\mathcal{M}_{\sigma, q}(\Sym^r, u-u')
\frac{du'}{\sqrt{2\pi}}
\end{gather}
for $\sigma>1/2$ and $u \in \mathbb{R}$, where $\mathcal{M}_{\sigma, \mathbb{P}_{q}}(\Sym^r, u)$ is the $M$-function in Case I. 
By Lemma \ref{lem-abc} (c) and Proposition \ref{M} (4), the Fourier transform of $\mathcal{M}_{\sigma, \mathbb{P}}(\Sym^{r}, u)$ is represented as 
\begin{gather*}
\widetilde{\mathcal{M}}_{\sigma, \mathbb{P}}(\Sym^r, x)
= \lim_{y \to\infty}
\prod_{p\in\mathcal{P}_{q'}(y)}\widetilde{\mathcal{M}}_{\sigma, p}(\Sym^r, x), 
\end{gather*}
where the limit is understood in the sense that 
\begin{gather}\label{eq:lim_II}
y=y(q')<q'
\quad\text{and}\quad
y(q') \to\infty 
\quad\text{as}\quad 
q' \to\infty 
\end{gather}
as in Case II of \eqref{cases-I-II}. 
Note that $\mathcal{P}_{q'}(y)$ equals to $\mathcal{P}(y)$ due to $y=y(q')<q'$. 
Therefore, for any prime number $q'$ satisfying $q<y(q')$, we obtain
\begin{gather*}
\prod_{p\in\mathcal{P}_{q'}(y)} \widetilde{\mathcal{M}}_{\sigma, p}(\Sym^r, x)
= \prod_{p\in\mathcal{P}_{q}(y)} \widetilde{\mathcal{M}}_{\sigma, p}(\Sym^r, x)
\cdot \widetilde{\mathcal{M}}_{\sigma, q}(\Sym^r, x). 
\end{gather*}
In the same sense of the limit as in \eqref{eq:lim_II}, we have further
\begin{gather*}
\lim_{y \to\infty}
\prod_{p\in\mathcal{P}_{q}(y)} \widetilde{\mathcal{M}}_{\sigma, p}(\Sym^r, x) 
= \widetilde{\mathcal{M}}_{\sigma, \mathbb{P}_{q}}(\Sym^r, x)
\end{gather*}
by Lemma \ref{lem-abc} (c) since $y=y(q')$ satisfies $y>q$ and $y \to\infty$. 
As a result, we have the identity
\begin{gather*}
\widetilde{\mathcal{M}}_{\sigma, \mathbb{P}}(\Sym^r, x)
= \widetilde{\mathcal{M}}_{\sigma, \mathbb{P}_{q}}(\Sym^r, x)
\cdot \widetilde{\mathcal{M}}_{\sigma, q}(\Sym^r, x),  
\end{gather*}
and we derive \eqref{eq:M_Case_I_II} by considering the Fourier inverse transform of both sides. 
Therefore using \eqref{eq:M_Case_I_II} and $\Psi_1(u_1+u_2)=\Psi_1(u_1)+\Psi_1(u_2)$ we obtain 
\begin{align}\label{eq:CaseItoII}
&\int_{\mathbb{R}} \mathcal{M}_{\sigma, \mathbb{P}}(\Sym^{r}, u) \Psi_1(u) \,\frac{du}{\sqrt{2\pi}} 
\nonumber
\\
&= \int_{\mathbb{R}} \int_{\mathbb{R}} 
\mathcal{M}_{\sigma, \mathbb{P}_{q}}(\Sym^{r}, u_1) \mathcal{M}_{\sigma, q}(\Sym^r, u_2)
\Psi_1(u_1+u_2) 
\,\frac{du_1}{\sqrt{2\pi}} \frac{du_2}{\sqrt{2\pi}} 
\nonumber
\\
&= \int_{\mathbb{R}} 
\mathcal{M}_{\sigma, \mathbb{P}_{q}}(\Sym^{r}, u_1) 
\Psi_1(u_1) 
\,\frac{du_1}{\sqrt{2\pi}} 
\int_{\mathbb{R}}
\mathcal{M}_{\sigma, q}(\Sym^r, u_2) \,\frac{du_2}{\sqrt{2\pi}} 
\nonumber
\\
&\quad
+
\int_{\mathbb{R}} \mathcal{M}_{\sigma, \mathbb{P}_{q}}(\Sym^{r}, u_1) 
\,\frac{du_1}{\sqrt{2\pi}} 
\int_{\mathbb{R}} 
\mathcal{M}_{\sigma, q}(\Sym^r, u_2)
\Psi_1(u_2) 
\,\frac{du_2}{\sqrt{2\pi}}
\nonumber 
\\
&= \int_{\mathbb{R}} 
\mathcal{M}_{\sigma, \mathbb{P}_{q}}(\Sym^{r}, u_1) 
\Psi_1(u_1) 
\,\frac{du_1}{\sqrt{2\pi}} 
+ \int_{\mathbb{R}} 
\mathcal{M}_{\sigma, q}(\Sym^r, u_2)
\Psi_1(u_2) 
\,\frac{du_2}{\sqrt{2\pi}},
\end{align}
where on the last line we used \eqref{sosuuhitotunobaai} and
Proposition \ref{M} (5).
The first integral of the right-hand side of \eqref{eq:CaseItoII} can be calculated by \eqref{eq:intM} in Case I. 
Furthermore, the second integral is calculated as
\begin{gather*}
\int_{\mathbb{R}} 
\mathcal{M}_{\sigma, q}(\Sym^r, u_2)
\Psi_1(u_2) 
\,\frac{du_2}{\sqrt{2\pi}}
= c \int_{0}^{\pi} \mathscr{G}_{\sigma, q} (e^{i \theta_{q} r}) \,d^{\mathrm{ST}} \theta_{q}
\end{gather*}
by the definition of $\mathcal{M}_{\sigma, q}(\Sym^r, u)$
(see \eqref{prop-1-1}), 
which is further equal to 
\begin{gather*}
= c \sum_{j=1}^{\infty} \frac{c(0; j, r)}{j {q}^{j \sigma}}
= c
\begin{cases}
-2^{-1} {q}^{-2\sigma}
& r=1,
\\
 \sum_{j > 1} j^{-1} {q}^{-j \sigma}
& r=2 
\end{cases}
\end{gather*}
by \eqref{eq:int_p}. 
Inserting this to \eqref{eq:CaseItoII} and applying \eqref{eq:intM} in Case I, we obtain 
\begin{gather*}
\int_{\mathbb{R}} \mathcal{M}_{\sigma, \mathbb{P}}(\Sym^{r}, u) \Psi_1(u) \,\frac{du}{\sqrt{2\pi}}
= c
\begin{cases}
-2^{-1} \sum_{p \in \mathbb{P}} p^{-2\sigma}
& r=1,
\\
\sum_{p \in \mathbb{P}} \sum_{j > 1} j^{-1} p^{-j \sigma}
& r=2 
\end{cases}
\end{gather*}
which is nothing but \eqref{eq:intM} in Case II.

%
%
\section{The deduction mentioned in Remark \ref{Psi_1-valid}}\label{sec-1to2} 

We conclude this paper with describing the details of the deduction
argument mentioned in Remark \ref{Psi_1-valid}.
The proof of Theorem \ref{main2} given in Sections \ref{sec-Mine} and
\ref{sec-Mine2} is to show that both the left-hand side and the
right-hand side of \eqref{thm2-1} are equal to \eqref{thm2-2}, and 
consequently the both sides of \eqref{thm2-1} are the same. 
The argument below supplies a direct deduction of \eqref{thm2-1}, and
gives a reasoning of the ``parity result'' of $M$-functions
(which also clarifies the meaning of the results in \cite{mu}).

We assume that Theorem \ref{main1} and
\cite[Theorem 1.5]{mu} are valid for $\Psi_1(x)=cx$.
This implies that \eqref{thm2-1} is valid for $r=1,2$.
We show \eqref{thm2-1} for general $r$ by induction.
We suppose that \eqref{thm2-1} is valid for all $r < \mathfrak{r}$.
From \cite[Theorem~1.5]{mu} we know that
there exists $\mathcal{M}_{\sigma}^{\sharp}=\mathcal{M}_{\sigma,\mathbb{P}_*}^{\sharp}$ 
such that
\[
  \int_{\mathbb{R}}\mathcal{M}_{\sigma}^{\sharp}(u)\Psi_1(u) \frac{du}{\sqrt{2\pi}}
  =
  \mathrm{Avg}\Psi_1(\log L_{\mathbb{P}_q}(\mathrm{Sym}_f^{\mathfrak{r}}, \sigma)
  -
  \log L_{\mathbb{P}_q}(\mathrm{Sym}_f^{\mathfrak{r}-2}, \sigma)).
  \]
  
\begin{rem}
Actually in the statement of \cite[Theorem 1.5]{mu}, the $M$-function in
Case I and that in Case II have not been distinguished, but they should be
different as in Theorem \ref{main1} in the present paper. 
Here we write them as $\mathcal{M}_{\sigma,\mathbb{P}_q}^{\sharp}$ in Case I and
$\mathcal{M}_{\sigma,\mathbb{P}}^{\sharp}$ in Case II. 
(In fact, just before \cite[Proposition 3.2]{mu}, the parameter $y$ was 
introduced, but we should notice that this $y$ should satisfy $(34)$ in the
present paper.)
\end{rem}

This $\mathcal{M}_{\sigma}^{\sharp}(u)$ is constructed in a way similar to that of
$\mathcal{M}_{\sigma}(\Sym^r, u)$ in the present paper; that is, there exists
$\mathcal{M}_{\sigma,\mathcal{P}_q(y)}^{\sharp}(u)$, an analogue of
$\mathcal{M}_{\sigma,\mathcal{P}_q(y)}(\Sym^r,u)$ in the present paper,
which tends to $\mathcal{M}_{\sigma}^{\sharp}(u)$ as $y$ tends to $\infty$,
where the limit $y\to\infty$ is to be understood as \eqref{cases-I-II}.

Since $\Psi_1(x+y)=\Psi_1(x) + \Psi_1(y)$, we have
\begin{align}\label{cor-pr-1}
  &
  \int_{\mathbb{R}}\mathcal{M}_{\sigma}^{\sharp}(u)\Psi_1(u) \frac{du}{\sqrt{2\pi}}
  \notag\\
  =&
  \mathrm{Avg}\Psi_1(\log L_{\mathbb{P}_q}(\mathrm{Sym}_f^{\mathfrak{r}}, \sigma))
  -
  \mathrm{Avg}\Psi_1(\log L_{\mathbb{P}_q}(\mathrm{Sym}_f^{\mathfrak{r}-2}, \sigma)),
\end{align}
while by induction assumption
\begin{align}\label{cor-pr-2}
\mathrm{Avg}\Psi_1
(
\log L_{\mathbb{P}_q}(\mathrm{Sym}_f^{\mathfrak{r}-2}, \sigma)
)
=
\int_{\mathbb{R}}
\mathcal{M}_{\sigma}(\mathrm{Sym}^{\mathfrak{r}_0}, u)
\Psi_1(u) \frac{du}{\sqrt{2\pi}},
%
\end{align}
where $\mathfrak{r}_0$ is 2 if $\mathfrak{r}$ is even, 1 if
$\mathfrak{r}$ is odd.
From \eqref{cor-pr-1} and \eqref{cor-pr-2} we obtain
\[
\mathrm{Avg}
\Psi_1(\log L_{\mathbb{P}_q}(\mathrm{Sym}_f^{\mathfrak{r}}, \sigma)
=
\int_{\mathbb{R}}
\mathcal{M}_{\sigma}(\mathrm{Sym}^{\mathfrak{r}_0}, u)
\Psi_1(u) \frac{du}{\sqrt{2\pi}}
+
\int_{\mathbb{R}}
  \mathcal{M}_{\sigma}^{\sharp}(u)
  \Psi_1(u) \frac{du}{\sqrt{2\pi}}.
\]
Formula \eqref{thm2-1} for $\mathfrak{r}$ will then follow if we can show:

\begin{lem}\label{lastlemma}
We have
\begin{align}\label{integral-vanish}
\int_{\mathbb{R}}
  \mathcal{M}_{\sigma}^{\sharp}(u)
  \Psi_1(u) \frac{du}{\sqrt{2\pi}}=0.
\end{align}
\end{lem}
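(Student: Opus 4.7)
The plan is to compute the left-hand side of \eqref{integral-vanish} directly.  By the hypothesis that \cite[Theorem~1.5]{mu} holds for $\Psi=\Psi_1$, the defining property of $\mathcal{M}_\sigma^\sharp$ together with the linearity $\Psi_1(u)=cu$ gives
\[
\int_{\mathbb{R}}\mathcal{M}_\sigma^\sharp(u)\Psi_1(u)\frac{du}{\sqrt{2\pi}}
= c\cdot\Avg\big[\log L_{\mathbb{P}_q}(\Sym_f^{\mathfrak{r}},\sigma) - \log L_{\mathbb{P}_q}(\Sym_f^{\mathfrak{r}-2},\sigma)\big],
\]
so it suffices to show that this averaged difference vanishes.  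I will evaluate it by adapting the Dirichlet-series argument of Section~\ref{sec-Mine}, applied now to the difference rather than to each $\log L$ separately.

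First I would apply Lemma~\ref{lem:2} with $y=q^{\theta m}$ (for a small $\theta>0$ to be fixed later) to each of $\log L_{\mathbb{P}_q}(\Sym_f^{\mathfrak{r}},\sigma)$ and $\log L_{\mathbb{P}_q}(\Sym_f^{\mathfrak{r}-2},\sigma)$; the contribution from primitive forms for which either $L$-function fails to be entire is absorbed by Assumption $(\mathscr{P}_k^{\mathrm{P}}(q^m)$ condition$)$, exactly as in the derivation of \eqref{eq:1sterror}.  The key algebraic observation is that for $n=p^j$ with $p\in\mathbb{P}_q$, telescoping the sum \eqref{eq:gamma} defining $\gamma(p^j;\Sym_f^r)$ under $h\mapsto h+1$ yields
\[
\gamma(p^j;\Sym_f^{\mathfrak{r}}) - \gamma(p^j;\Sym_f^{\mathfrak{r}-2}) = 2\cos(j\mathfrak{r}\,\theta_f(p)),
\]
which by the Chebyshev identity $U_n(\cos\xi)-U_{n-2}(\cos\xi)=2\cos(n\xi)$ together with $\lambda_f(p^\ell)=U_\ell(\cos\theta_f(p))$ equals $\lambda_f(p^{j\mathfrak{r}}) - \lambda_f(p^{j\mathfrak{r}-2})$.

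Next I would invoke Petersson's formula \eqref{P}.  Since $\mathfrak{r}\geq 3$ and $j\geq 1$, we have both $p^{j\mathfrak{r}}>1$ and $p^{j\mathfrak{r}-2}\geq p>1$, so $\delta_{1,p^{j\mathfrak{r}}}=\delta_{1,p^{j\mathfrak{r}-2}}=0$ and
\[
\sideset{}{'}\sum_{f\in\mathscr{P}_k(q^m)}\big[\lambda_f(p^{j\mathfrak{r}}) - \lambda_f(p^{j\mathfrak{r}-2})\big]
\ll (p^{j\mathfrak{r}(k-1)/2} + p^{(j\mathfrak{r}-2)(k-1)/2})\,E(q^m).
\]
The main term therefore vanishes identically.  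Collecting the resulting error from summing over $p^j<y$ together with the truncation error from Lemma~\ref{lem:2} yields a bound of the same shape as the right-hand side of \eqref{1stmomentfinal}, but with no surviving main term.  Choosing $0<\theta<\min\{2/(\mathfrak{r}(k-1)+1),\,2\Delta\}$ makes every such error tend to $0$ as $q^m\to\infty$ in both Case~I and Case~II, and \eqref{integral-vanish} follows.

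The main obstacle is purely bookkeeping: one has to check that the error estimates of Section~\ref{sec-Mine}, stated there for a single $\log L_{\mathbb{P}_q}(\Sym_f^r,\sigma)$, still apply after subtracting two such quantities.  This is immediate since the estimates depend on $r$ only through the crude bounds $p^{j(k-1)r/2}$ and $r+1$, so replacing $r$ by $\mathfrak{r}$ upper-bounds the error of the difference as well.  The vanishing of the main term is of course the manifestation, at the first-moment level, of the parity property of $c(0;j,r)$ recorded in \eqref{eq:c0}, and is precisely the reason for the parity phenomenon stated after Theorem~\ref{main2}.
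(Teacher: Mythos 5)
Your proof is correct, but it takes a genuinely different route from the paper's. The paper establishes \eqref{integral-vanish} by working entirely on the $M$-function side: it passes the limit through using an analogue of Corollary~\ref{cor:} (relying on \cite[Propositions 3.1 and 3.3]{mu}), rewrites the finite-level integral via the Haar-measure formula as $-2c\,\Re\sum_{p\in\mathcal{P}}\int_{\mathcal{T}}\log(1-t_pp^{-\sigma})\,d^Ht_p$, and then observes that each single-prime integral vanishes by Cauchy's theorem (equation \eqref{Cauchy}); Case~II is handled by the convolution decomposition $\mathcal{M}_{\sigma,\mathbb{P}}^{\sharp}=\mathcal{M}_{\sigma,\mathbb{P}_q}^{\sharp}*\mathcal{M}_{\sigma,q}^{\sharp}$. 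This is a short, essentially residue-theoretic computation that needs nothing beyond the construction of $\mathcal{M}_\sigma^{\sharp}$ — in particular it does not use GRH, Lemma~\ref{lem:2}, Petersson's formula, or the hypothesis that \cite[Theorem~1.5]{mu} holds for $\Psi_1$. You instead use precisely that hypothesis to rewrite the integral as $c\cdot\Avg[\log L_{\mathbb{P}_q}(\Sym_f^{\mathfrak{r}},\sigma)-\log L_{\mathbb{P}_q}(\Sym_f^{\mathfrak{r}-2},\sigma)]$ and then run the first-moment machinery of Section~\ref{sec-Mine} on the difference: the telescoping identity $\gamma(p^j;\Sym_f^{\mathfrak{r}})-\gamma(p^j;\Sym_f^{\mathfrak{r}-2})=2\cos(j\mathfrak{r}\theta_f(p))=\lambda_f(p^{j\mathfrak{r}})-\lambda_f(p^{j\mathfrak{r}-2})$ is correct, and since $j\mathfrak{r},\,j\mathfrak{r}-2\geq 1$ Petersson kills the diagonal term, leaving only error terms controlled by Lemma~\ref{lem:2}, \eqref{E2} and the $\mathscr{P}_k^{\mathrm{P}}$-condition. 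Your approach has the merit of displaying the parity invariance of $c(0;j,r)$ at the source, but it is heavier than needed — it re-imports GRH, Perron/Borel--Carath\'eodory bounds, and the $\mathscr{P}_k^{\mathrm{P}}$-condition into a step where the paper uses only $\oint\log(1-tp^{-\sigma})\,dt/t=0$. It also somewhat undercuts the purpose of Section~\ref{sec-1to2}, which is to give a structural deduction of the parity phenomenon on the $M$-function side rather than re-deriving the first moment.
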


\begin{proof}
First treat Case I.
To prove \eqref{integral-vanish}, we first notice
\begin{gather}\label{corcor-formula}
\lim_{y \to\infty} 
\int_{\mathbb{R}} \mathcal{M}_{\sigma, \mathcal{P}_q(y)}^{\sharp}(u) \Psi_1(u) \frac{du}{\sqrt{2\pi}}
= \int_{\mathbb{R}} \mathcal{M}_{\sigma,\mathbb{P}_q}^{\sharp}(u) \Psi_1(u) \frac{du}{\sqrt{2\pi}},
\end{gather}
where $y\to\infty$ is as Case I of \eqref{cases-I-II}.
This can be shown analogously to Corollary \ref{cor:}, by using
\cite[Propositions 3.1 and 3.3]{mu}. 

We evaluate the integral on the left-hand side of
\eqref{corcor-formula}.
Notice that
\begin{align}\label{Cauchy}
\int_{\mathcal{T}}
\log(1-t_p p^{-\sigma})d^H t_{p}=0
\end{align}
for any prime $p$, because of Cauchy's theorem.
Using the formula stated in the statement of \cite[Proposition 3.1]{mu},
for any finite subset $\mathcal{P}\subset\mathbb{P}_q$ we have
\begin{align*}
&\int_{\mathbb{R}} \mathcal{M}_{\sigma, \mathcal{P}}^{\sharp}(u) \Psi_1(u) \frac{du}{\sqrt{2\pi}}
=\int_{\mathcal{T}_{\mathcal{P}}}\Psi_1
(2\Re g_{\sigma,\mathcal{P}}(t_{\mathcal{P}}))d^H t_{\mathcal{P}}
\notag\\
&\quad= -2c\Re \int_{\mathcal{T}_{\mathcal{P}}}
\sum_{p\in \mathcal{P}}\log(1-t_p p^{-\sigma})d^H t_{\mathcal{P}}
\notag\\
&\quad =-2c\Re \sum_{p\in \mathcal{P}}
\int_{\mathcal{T}}
\log(1-t_p p^{-\sigma})d^H t_{p},
\end{align*}
which is equal to 0 by \eqref{Cauchy}.
Therefore, from this result with $\mathcal{P}=\mathcal{P}_q(y)$ and 
\eqref{corcor-formula}, we obtain 
\eqref{integral-vanish} in Case I.

Next consider Case II.    As an analogue of Lemma \ref{lem-abc} (c),
we have the Euler product expression of the Fourier integral of
$\mathcal{M}_{\sigma}^{\sharp}$ (though this point is not explicitly mentioned
in \cite{mu}).
Therefore, as in the last stage of the proof of \eqref{eq:intM}, 
we obtain
$$
\mathcal{M}_{\sigma,\mathbb{P}}^{\sharp}(u)=\int_{\mathbb{R}}
\mathcal{M}_{\sigma,\mathbb{P}_q}^{\sharp}(u')
\mathcal{M}_{\sigma,q}^{\sharp}(u-u')\frac{du'}{\sqrt{2\pi}},
$$
and hence
\begin{align*}
&\int_{\mathbb{R}}\mathcal{M}_{\sigma,\mathbb{P}}^{\sharp}(u)\Psi_1(u)\frac{du_1}
{\sqrt{2\pi}}\notag\\
&\quad=\int_{\mathbb{R}}\mathcal{M}_{\sigma,\mathbb{P}_q}^{\sharp}(u_1)\Psi_1(u_1)
\frac{du}{\sqrt{2\pi}}+
\int_{\mathbb{R}}\mathcal{M}_{\sigma,q}^{\sharp}(u_2)\Psi_1(u_2)\frac{du_2}
{\sqrt{2\pi}},
\end{align*}
analogously to \eqref{eq:CaseItoII}.
On the right-hand side, the first integral vanishes because of
\eqref{integral-vanish} in Case I, and the second integral also vanishes
because this is equal to the left-hand side of \eqref{Cauchy} with $p=q$
(see the proof of \cite[Proposition 3.1]{mu}).
Thus \eqref{integral-vanish} in Case II follows, and
the proof of formula \eqref{thm2-1} for $\mathfrak{r}$ is complete.
\end{proof}
\bigskip

{\bf Acknowledgment.}
The authors express their sincere thanks to Professors Yasushi Mizusawa and Shingo Sugiyama
for pointing out inaccuracies included in an earlier version of the manuscript.

%
%

%
\noindent
Philippe Lebacque:\\
Laboratoire de Math\'{e}matiques de Besan\c{c}on,\\
UFR Sciences et techniques 16, route de Gray 25 030 Besan\c{c}on, France.\\
philippe.lebacque@univ-fcomte.fr
\vspace{0.4cm}
\\
Kohji Matsumoto:\\
Graduate School of Mathematics,
Nagoya University, Furocho, Chikusa-ku, Nagoya 464-8602, Japan.\\
kohjimat@math.nagoya-u.ac.jp
\vspace{0.4cm}
\\
Masahiro Mine:\\
Faculty of Science and Technology, Sophia University,
7-1 Kioi-cho, Chiyoda-ku, Tokyo 102-8554, Japan.\\
m-mine@sophia.ac.jp
\vspace{0.4cm}
\\
Yumiko Umegaki:\\
Department of Mathematical and Physical Sciences,
Nara Women's University,
Kitauoya Nishimachi, Nara 630-8506, Japan.\\
ichihara@cc.nara-wu.ac.jp
\end{document}